\newcommand{\vertt}{\mid}
\newcommand{\mathrell}{:}
\newcommand{\rright}{\right}
\newcommand{\lleft}{\left}
\newcommand{\rrVert}{\Vert}
\newcommand{\rrvert}{\vert}
\newcommand{\llVert}{\Vert}
\newcommand{\llvert}{\vert}
\newcommand{\eqref}[1]{(\ref{#1})}
\newcommand{\metricspace}{\mathcal{Z}}
\newtheorem{thmm}{Theorem}[section]
\newtheorem{cor}[thmm]{Corollary}
\newtheorem{lemma}[thmm]{Lemma}
\newtheorem{prop}[thmm]{Proposition}
\newcommand{\cnst}[1]{\mathrm{#1}}
\newcommand{\eps}{\varepsilon}
\newcommand{\econst}{\mathrm{e}}
\newcommand{\Id}{\mathbf{I}}
\newcommand{\zeromtx}{\mathbf{0}}
\newcommand{\coll}[1]{\mathscr{#1}}
\newcommand{\R}{\mathbb{R}}
\newcommand{\C}{\mathbb{C}}
\newcommand{\M}{\mathbb{M}}
\newcommand{\sgn}[1]{\operatorname{sgn}{#1}}
\newcommand{\Expect}{\operatorname{\mathbb{E}}}
\newcommand{\ent}{\operatorname{ent}}
\newcommand{\vct}[1]{\bolds{#1}}
\newcommand{\vctt}[1]{\mathbf{#1}}
\newcommand{\mtx}[1]{\mathbf{#1}}
\newcommand{\mtxx}[1]{\bolds{#1}}
\newcommand{\diag}{\operatorname{diag}}
\newcommand{\trace}{\operatorname{tr}}
\newcommand{\ntr}{\operatorname{\bar{tr}}}
\newcommand{\psdle}{\preccurlyeq}
\newcommand{\psdge}{\succcurlyeq}
\begin{document}
\begin{frontmatter}

\title{Matrix concentration inequalities via the method of exchangeable
pairs\thanksref{T1}}
\runtitle{Matrix concentration via exchangeable pairs}

\thankstext{T1}{Supported in part by
the U.S. Army Research Laboratory and the U.S. Army Research
Office under Contract/Grant number W911NF-11-1-0391.}

\begin{aug}
\author[a]{\fnms{Lester} \snm{Mackey}\corref{}\ead[label=e1]{lmackey@stanford.edu}\thanksref{t2}},
\author[b]{\fnms{Michael I.} \snm{Jordan}\ead[label=e2]{jordan@stat.berkeley.edu}},
\author[c]{\fnms{Richard Y.} \snm{Chen}\ead[label=e3]{ycchen@caltech.edu}\thanksref{t3}},
\author[c]{\fnms{Brendan}~\snm{Farrell}\ead[label=e4]{farrell@cms.caltech.edu}\thanksref{t3}}
\and
\author[c]{\fnms{Joel A.} \snm{Tropp}\ead[label=e5]{jtropp@cms.caltech.edu}\thanksref{t3}}

\thankstext{t2}{Supported by the National Defense Science and Engineering
Graduate Fellowship.}
\thankstext{t3}{Supported by
ONR awards N00014-08-1-0883 and N00014-11-1002,
AFOSR award FA9550-09-1-0643,
DARPA award N66001-08-1-2065
and a Sloan Research Fellowship.}
\runauthor{L. Mackey et al.}

\affiliation{Stanford University,
University of California, Berkeley,
California~Institute of Technology,
California~Institute of Technology
and~California Institute of Technology}

\address[a]{L. Mackey\\
Department of Statistics\\
Stanford University\\
Sequoia Hall\\
390 Serra Mall\\
Stanford, California 94305-4065\\
USA\\
\printead{e1}}

\address[b]{M. I. Jordan\\
Departments of EECS and Statistics\\
University of California, Berkeley\\
427 Evans Hall\\
Berkeley, California 94720\\
USA\\
\printead{e2}}

\address[c]{R.~Y. Chen\\
B. Farrell\\
J.~A. Tropp\\
Department of Computing\\
\quad and Mathematical Sciences\\
California Institute of Technology\\
1200 E. California Blvd.\\
Pasadena, California 91125\\
USA\\
\printead{e3} \\
\phantom{E-mail:\ }\printead*{e4}\\
\phantom{E-mail:\ }\printead*{e5}}
\end{aug}
%
%%\and
%%\author[B]{\fnms{} \snm{}}
%%\runauthor{}
%%\affiliation{}
%%\dedicated{}
%%\address[B]{}

% HISTORY:
\received{\smonth{2} \syear{2012}}
\revised{\smonth{2} \syear{2013}}

% ABSTRACT
%
\begin{abstract}
This paper derives exponential concentration inequalities and
polynomial moment inequalities for the spectral norm of a random
matrix. The analysis requires a matrix extension of the scalar
concentration theory developed by Sourav Chatterjee using Stein's
method of exchangeable pairs. When applied to a sum of independent
random matrices, this approach yields matrix generalizations of the
classical inequalities due to Hoeffding, Bernstein, Khintchine and
Rosenthal. The same technique delivers bounds for sums of dependent
random matrices and more general matrix-valued functions of dependent
random variables.
\end{abstract}

% KEYWORDS
% Pirmas kwd is didziosios raides
%
\begin{keyword}[class=AMS]
\kwd[Primary ]{60B20} %
\kwd{60E15} %
\kwd[; secondary ]{60G09}
\kwd{60F10}
\end{keyword}
\begin{keyword}
\kwd{Concentration inequalities}
\kwd{moment inequalities}
\kwd{Stein's method}
\kwd{exchangeable pairs}
\kwd{random matrix}
\kwd{noncommutative}
\end{keyword}

\end{frontmatter}

%s1 #&#
\section{\texorpdfstring{Introduction.}{Introduction}}

Matrix concentration inequalities control the fluctuations of a random
matrix about its mean.
At present, these results provide an effective method for studying sums of
independent random matrices and matrix martingales \cite
{Oli10Concentration-Adjacency,Tro11Freedmans-Inequality,Tro11User-Friendly-FOCM,Min11Some-Extensions}.
They have been used to streamline
the analysis of structured random matrices
in a range of applications, including
statistical estimation \cite{Kol11Oracle-Inequalities}, randomized
linear algebra \cite{Git11Spectral-Norm,CD11Sublinear-Randomized},
stability of least-squares approximation \cite
{CDL11Stability-Accuracy}, combinatorial and robust optimization \cite
{So11,CheungSoWa11}, matrix completion \mbox{\cite
{Gro11Recovering-Low-Rank,Rec11Simpler-Approach,NegahbanWa12,MackeyTaJo11}}
and random graph theory \cite{Oli10Concentration-Adjacency}.
These works compose only a small sample of the papers that rely on matrix
concentration inequalities. Nevertheless, %
it remains common to encounter new classes of random matrices that we
cannot treat with
the available techniques.

The purpose of this paper is to lay the foundations of a new approach
for analyzing structured random matrices. Our work is based on
Chatterjee's technique for developing scalar concentration
inequalities \cite
{Cha07Steins-Method,Cha08Concentration-Inequalities} via Stein's
method of exchangeable pairs \cite{Stein72}. We extend this argument to
the matrix setting, where we use it to establish exponential
concentration bounds (Theorems \ref{thmm:concentration-bdd} and \ref
{thmm:concentration-subgauss}) and polynomial moment inequalities
(Theorem~\ref{thmm:BDG-inequality}) for the spectral norm of a random matrix.

To illustrate the power of this idea, we show that our general results
imply several important
concentration bounds for a sum of independent, random, Hermitian
matrices \cite
{LPP91Noncommutative-Khintchine,JX03Noncommutative-Burkholder,Tro11User-Friendly-FOCM}.
In particular, we obtain a matrix Hoeffding inequality with optimal
constants (Corollary~\ref{cor:hoeffding}) and a version of the matrix
Bernstein inequality (Corollary~\ref{cor:bernstein}). Our techniques
also yield concise proofs of the matrix Khintchine inequality
(Corollary~\ref{cor:khintchine}) and the matrix Rosenthal inequality
(Corollary~\ref{cor:ros-pin}).

The method of exchangeable pairs also applies to matrices constructed
from dependent random variables.
We offer a hint of the prospects by establishing concentration results
for several other classes of random matrices.
In Section~\ref{sec:cond-zero-mean}, we consider sums of dependent
matrices that satisfy a conditional zero-mean property.
In Section~\ref{sec:combinatorial-sum}, we treat a broad class of
combinatorial matrix statistics.
Finally, in Section~\ref{sec:self-repro}, we analyze general
matrix-valued functions that have a self-reproducing property.
%

%

%

%

%

%

%

%

%

%s1.1 #&#
\subsection{\texorpdfstring{Notation and preliminaries.}{Notation and preliminaries}}

The symbol $\llVert {\cdot} \rrVert $ is reserved for the
spectral norm, which
returns the largest singular value
of a general complex matrix.

We write $\M^d$ for the algebra of all $d \times d$ complex matrices.
The trace and normalized trace
of a square matrix are defined as
\[
\trace\mtx{B} \mathrell=\sum_{j=1}^d
b_{jj}\quad \mbox{and} \quad\ntr\mtx{B} \mathrell=\frac{1}{d} \sum
_{j=1}^d b_{jj} \qquad\mbox{for $
\mtx{B} \in\M^{d}$.}
\]

We define the linear space $\mathbb{H}^{d}$ of Hermitian $d \times d$ matrices.
\emph{All matrices in this paper are Hermitian unless explicitly stated
otherwise.}
The symbols $\lambda_{\max}(\mtx{A})$ and $\lambda_{\min}(\mtx{A})$
refer to the algebraic
maximum and minimum eigenvalues of a matrix $\mtx{A} \in\mathbb{H}^{d}$.
For each interval $I \subset\R$, we define the set of Hermitian matrices
whose eigenvalues fall in that interval,
\[
\mathbb{H}^{d}(I) \mathrell=\bigl\{ \mtx{A} \in
\mathbb{H}^{d} \dvtx\bigl[\lambda_{\min}(\mtx{A}),
\lambda_{\max}(\mtx{A})\bigr] \subset I \bigr\}.
\]
The set $\mathbb{H}_{+}^{d}$ consists of all
positive-semidefinite (psd) $d \times d$ matrices.
Curly inequalities refer to the semidefinite partial order on Hermitian
matrices.
For example, we write $\mtx{A} \psdle\mtx{B}$ to signify that the matrix
$\mtx{B} - \mtx{A}$ is psd.

We require operator convexity properties of the matrix square so often
that we state them now:
%
%e1.1 #&#
\begin{equation}
\label{eqn:square-convex} \biggl(\frac{\mtx{A} + \mtx{B}}{2} \biggr)^2 \psdle
\frac{\mtx
{A}^2 +
\mtx{B}^2}{2} \qquad\mbox{for all $\mtx{A}, \mtx{B} \in\mathbb{H}^{d}$.}
\end{equation}
More generally, we have the operator Jensen inequality
%
%e1.2 #&#
\begin{equation}
\label{eqn:kadison} (\Expect\mtx{X})^2 \psdle\Expect\mtx{X}^2,
\end{equation}
valid for any random Hermitian matrix, provided that $\Expect\llVert {\mtx{X}} \rrVert ^2 < \infty$. To verify this result,
simply expand the inequality $\Expect(\mtx{X} - \Expect\mtx{X})^2
\psdge\mtx{0}$.
The operator Jensen inequality also holds for conditional expectation,
again provided that $\Expect\llVert {\mtx{X}} \rrVert ^2 <
\infty$.
%

%

%

%

%s2 #&#
\section{\texorpdfstring{Exchangeable pairs of random matrices.}
{Exchangeable pairs of random matrices}} \label{sec:exchange}

Our approach to studying random matrices is based on the method of
exchangeable pairs, which
originates in the work of Charles Stein \cite{Stein72} on normal
approximation for a sum of
dependent random variables. In this section, we explain how some
central ideas from this theory
extend to matrices.

%s2.1 #&#
\subsection{\texorpdfstring{Matrix Stein pairs.}{Matrix Stein pairs}} \label{sec:pairs}

First, we define an exchangeable pair.

%de2.1 #&#
\begin{defn}[(Exchangeable pair)] \label{def:exchange}
Let $Z$ and $Z'$ be random variables taking values in a Polish space %
$\metricspace$.
We say that $(Z, Z')$ is an \emph{exchangeable pair} if it has the same
distribution as $(Z', Z)$.
In particular, $Z$ and $Z'$ must share the same distribution.
\end{defn}

We can obtain a lot of information about the fluctuations of a random
matrix~$\mtx{X}$
if we can construct a good exchangeable pair $(\mtx{X}, \mtx{X}')$.
With this motivation in mind, let us introduce a special class of
exchangeable pairs.
%

%

%

%de2.2 #&#
\begin{defn}[(Matrix Stein pair)] \label{def:stein-pair}
Let $(Z, Z')$ be an exchangeable pair of random variables taking values
in a Polish space $\metricspace$,
and let $\mtxx{\Psi} \dvtx\metricspace\to\mathbb{H}^{d}$ be a
measurable function.
Define the random Hermitian matrices
\[
\mtx{X} \mathrell=\mtxx{\Psi}(Z)\quad \mbox{and}\quad \mtx{X}'
\mathrell=\mtxx{\Psi}\bigl(Z'\bigr).
\]
We say that $(\mtx{X}, \mtx{X}')$ is a \emph{matrix Stein pair} if
there is a constant $\alpha\in(0, 1]$
for which
%
%e2.1 #&#
\begin{equation}
\label{eqn:F-and-G} \Expect\bigl[ \mtx{X} - \mtx{X}' \vertt Z \bigr] =
\alpha\mtx{X}\qquad \mbox{almost surely.}
\end{equation}
The constant $\alpha$ is called the \emph{scale factor} of the pair.
When discussing a matrix Stein pair $(\mtx{X}, \mtx{X}')$, we always
assume that $\Expect\llVert {\mtx{X}} \rrVert ^2 < \infty$.
\end{defn}

A matrix Stein pair $(\mtx{X}, \mtx{X}')$ has several
useful properties. First, $(\mtx{X}, \mtx{X}')$ always forms an
exchangeable pair.
Second, it must be the case that $\Expect\mtx{X} = \mtx{0}$. Indeed,
\[
\Expect\mtx{X} = \frac{1}{\alpha}\Expect \bigl[ \Expect\bigl[ \mtx{X} - \mtx
{X}' \vertt Z \bigr] \bigr] = \frac{1}{\alpha}\Expect\bigl[ \mtx{X}
- \mtx{X}' \bigr] = \mtx{0}
\]
because of identity \eqref{eqn:F-and-G}, the tower property of
conditional expectation and the exchangeability of $(\mtx{X}, \mtx
{X}')$. In Section~\ref{sec:indep-sum}, we construct a matrix Stein
pair for a sum of centered, independent random matrices. More
sophisticated examples appear in Sections~\ref{sec:cond-zero-mean}, \ref
{sec:combinatorial-sum} and \ref{sec:self-repro}.

%re2.3 #&#
\begin{rem}[(Approximate matrix Stein pairs)]
In the scalar setting, it is common to consider exchangeable pairs that
satisfy an approximate Stein condition. For matrices, this condition
reads $\Expect[ \mtx{X} - \mtx{X}' \vertt Z ] = \alpha\mtx{X} +
\mtx
{R}$, where $\mtx{R}$ is an error term. The methods in this paper
extend easily to this case.
\end{rem}

%

%

%

%

%s2.2 #&#
\subsection{\texorpdfstring{The method of exchangeable pairs.}{The method of exchangeable pairs}} \label{sec:method-exchange}

A well-chosen matrix Stein pair $(\mtx{X}, \mtx{X}')$ provides a surprisingly
powerful tool for studying the random matrix $\mtx{X}$. The technique
depends on a
fundamental technical lemma.

%

%le2.4 #&#
\begin{lemma}[(Method of exchangeable pairs)] \label{lem:exchange}
Suppose that $(\mtx{X}, \mtx{X}') \in\mathbb{H}^{d} \times\mathbb
{H}^{d}$ is a
matrix Stein pair with scale factor $\alpha$.
Let $\mtx{F}\dvtx\mathbb{H}^{d} \rightarrow\mathbb{H}^{d}$ be a
measurable function
that satisfies the regularity condition
%
%e2.2 #&#
\begin{equation}
\label{eqn:regularity-mep} \Expect\bigl\llVert {\bigl(\mtx{X} - \mtx{X}'\bigr)
\cdot\mtx{F}(\mtx{X}) } \bigr\rrVert < \infty.
\end{equation}
Then
%
%e2.3 #&#
\begin{equation}
\label{eqn:exchange} \Expect \bigl[ \mtx{X} \cdot\mtx{F}(\mtx{X}) \bigr] =
\frac{1}{2\alpha} \Expect \bigl[ \bigl(\mtx{X} - \mtx{X}'\bigr)
\bigl(\mtx {F}(\mtx {X}) - \mtx{F}\bigl(\mtx{X}'\bigr) \bigr) \bigr].
\end{equation}
\end{lemma}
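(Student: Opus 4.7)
The plan is to expand the right-hand side of \eqref{eqn:exchange}, use the exchangeability of $(\mtx{X}, \mtx{X}')$ to collapse the four resulting terms into two, and then apply the defining relation \eqref{eqn:F-and-G} of a matrix Stein pair to finish.

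First, I would expand the product inside the expectation on the right:
\begin{equation*}
(\mtx{X} - \mtx{X}')(\mtx{F}(\mtx{X}) - \mtx{F}(\mtx{X}'))
= \mtx{X}\mtx{F}(\mtx{X}) - \mtx{X}\mtx{F}(\mtx{X}') - \mtx{X}'\mtx{F}(\mtx{X}) + \mtx{X}'\mtx{F}(\mtx{X}').
\end{equation*}
Since $(\mtx{X}, \mtx{X}')$ is exchangeable (being a measurable function of the exchangeable pair $(Z, Z')$), swapping the roles of $\mtx{X}$ and $\mtx{X}'$ gives $\Expect[\mtx{X}\mtx{F}(\mtx{X})] = \Expect[\mtx{X}'\mtx{F}(\mtx{X}')]$ and $\Expect[\mtx{X}\mtx{F}(\mtx{X}')] = \Expect[\mtx{X}'\mtx{F}(\mtx{X})]$. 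Combining these identifications yields
\begin{equation*}
\Expect\bigl[(\mtx{X} - \mtx{X}')(\mtx{F}(\mtx{X}) - \mtx{F}(\mtx{X}'))\bigr]
= 2 \, \Expect\bigl[(\mtx{X} - \mtx{X}') \cdot \mtx{F}(\mtx{X})\bigr].
\end{equation*}

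Next, I would pull out $\mtx{F}(\mtx{X})$, which is a deterministic function of $Z$, from a conditional expectation given $Z$. By the tower property and \eqref{eqn:F-and-G},
\begin{equation*}
\Expect\bigl[(\mtx{X} - \mtx{X}') \cdot \mtx{F}(\mtx{X})\bigr]
= \Expect\bigl[\Expect[\mtx{X} - \mtx{X}' \condl Z] \cdot \mtx{F}(\mtx{X})\bigr]
= \alpha \, \Expect\bigl[\mtx{X} \cdot \mtx{F}(\mtx{X})\bigr].
\end{equation*}
Dividing by $2\alpha$ delivers the claimed identity \eqref{eqn:exchange}.

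The only genuine subtlety is ensuring that all of the integrals above exist and that Fubini-type exchanges of integration order are legitimate. Here the regularity hypothesis \eqref{eqn:regularity-mep} does the work: it guarantees that $\Expect\norm{(\mtx{X}-\mtx{X}')\mtx{F}(\mtx{X})} < \infty$, and by exchangeability the same bound holds with the roles of $\mtx{X}$ and $\mtx{X}'$ swapped, so every term $\Expect[\mtx{X}\mtx{F}(\mtx{X})]$, $\Expect[\mtx{X}'\mtx{F}(\mtx{X})]$, etc.\ is absolutely integrable entrywise. This justifies both the rearrangement by exchangeability and the conditioning step. I expect this integrability bookkeeping to be the main (and essentially only) place where care is required; the algebraic core of the argument is a direct matrix transcription of Chatterjee's original scalar identity.
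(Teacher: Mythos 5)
Your approach is essentially the same as the paper's (exchangeability plus the Stein identity~\eqref{eqn:F-and-G}), just run from right to left rather than left to right, and the algebra is correct. There is, however, a technical wrinkle in the integrability step. You expand
$(\mtx{X}-\mtx{X}')(\mtx{F}(\mtx{X})-\mtx{F}(\mtx{X}'))$
into four products and then assert that each of $\Expect[\mtx{X}\mtx{F}(\mtx{X})]$, $\Expect[\mtx{X}'\mtx{F}(\mtx{X})]$, $\Expect[\mtx{X}\mtx{F}(\mtx{X}')]$, $\Expect[\mtx{X}'\mtx{F}(\mtx{X}')]$ is absolutely integrable because of the regularity condition and exchangeability. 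That inference does not go through: \eqref{eqn:regularity-mep} (and its exchanged counterpart) only guarantee that the two \emph{difference} products $(\mtx{X}-\mtx{X}')\mtx{F}(\mtx{X})$ and $(\mtx{X}-\mtx{X}')\mtx{F}(\mtx{X}')$ are integrable, not the four individual terms. So as written, the split into four expectations and the term-by-term application of exchangeability are not justified by the stated hypothesis alone.

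The paper avoids this by never expanding beyond the two difference products. It first applies \eqref{eqn:F-and-G} and the pull-through property to obtain $\alpha \, \Expect[\mtx{X}\mtx{F}(\mtx{X})] = \Expect[(\mtx{X}-\mtx{X}')\mtx{F}(\mtx{X})]$ --- incidentally, this identity shows that $\mtx{X}\mtx{F}(\mtx{X})$ is integrable as a consequence, since $\alpha\,\mtx{X}\mtx{F}(\mtx{X}) = \Expect[(\mtx{X}-\mtx{X}')\mtx{F}(\mtx{X}) \condl Z]$ a.s. It then applies exchangeability to the whole product, $\Expect[(\mtx{X}-\mtx{X}')\mtx{F}(\mtx{X})] = \Expect[(\mtx{X}'-\mtx{X})\mtx{F}(\mtx{X}')]$, and averages the two resulting expressions. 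Your argument can be repaired: once $\mtx{X}\mtx{F}(\mtx{X})$ is known integrable (from the Stein identity plus regularity), the remaining three terms follow by subtraction and exchangeability, after which the four-way expansion is legitimate. But that requires reversing your logical order --- the integrability must be established before, not after, the term-by-term expansion. The cleaner route is the one the paper takes: use exchangeability on $(\mtx{X}-\mtx{X}')\mtx{F}(\mtx{X})$ as a single object rather than decomposing it.
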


In short, the %
randomness in the Stein pair furnishes an alternative expression
for the expected product of $\mtx{X}$ and the %
function $\mtx{F}$. Identity \eqref{eqn:exchange} is valuable
because it allows us to estimate this integral using the smoothness
properties of the function~$\mtx{F}$ and the discrepancy between $\mtx{X}$ and $\mtx{X}'$.

\begin{pf*}{Proof of Lemma \ref{lem:exchange}}
Suppose $(\mtx{X}, \mtx{X}')$ is a matrix Stein pair constructed from
an auxiliary exchangeable pair $(Z, Z')$.
The defining property \eqref{eqn:F-and-G} implies
\[
\alpha\cdot\Expect\bigl[ \mtx{X} \cdot\mtx{F}(\mtx{X}) \bigr] = \Expect \bigl[
\Expect\bigl[ \mtx{X} - \mtx{X}' \vertt Z \bigr] \cdot\mtx {F}(
\mtx{X}) \bigr] = \Expect\bigl[ \bigl(\mtx{X} - \mtx{X}'\bigr)
\mtx{F}(\mtx{X}) \bigr].
\]
We have used regularity condition \eqref{eqn:regularity-mep} to invoke
the pull-through property of conditional expectation. Since $(\mtx{X},
\mtx{X}')$ is an exchangeable pair,
\[
\Expect\bigl[ \bigl(\mtx{X} - \mtx{X}'\bigr) \mtx{F}(
\mtx{X}) \bigr] = \Expect\bigl[ \bigl(\mtx{X}' - \mtx{X}\bigr)
\mtx{F}\bigl(\mtx{X}'\bigr) \bigr] = - \Expect\bigl[ \bigl(\mtx{X} -
\mtx{X}'\bigr) \mtx{F}\bigl(\mtx{X}'\bigr) \bigr].
\]
Identity \eqref{eqn:exchange} follows when we average the two preceding
displays.
\end{pf*}

%s2.3 #&#
\subsection{\texorpdfstring{The conditional variance.}{The conditional variance}}

To each matrix Stein pair $(\mtx{X}, \mtx{X}')$, we may associate a
random matrix called the \emph{conditional variance} of $\mtx{X}$. The
ultimate purpose of this paper is to argue that the spectral norm of
$\mtx{X}$
is unlikely to be large when the conditional variance is small.

%de2.5 #&#
\begin{defn}[(Conditional variance)]
Suppose that $(\mtx{X}, \mtx{X}')$ is a matrix Stein pair,
constructed from an auxiliary exchangeable pair $(Z, Z')$.
The \emph{conditional variance} is the random matrix
%
%e2.4 #&#
\begin{equation}
\label{eqn:conditional-variance} %
\mtxx{\Delta}_{\mtx{X}} \mathrell=\mtxx{
\Delta}_{\mtx{X}}(Z) \mathrell=\frac{1}{2 \alpha} \Expect \bigl[
\bigl(\mtx{X} - \mtx{X}'\bigr)^2 \vertt Z \bigr],
\end{equation}
where $\alpha$ is the scale factor of the pair.
We may take any version of the conditional expectation in this definition.
\end{defn}

The conditional variance $\mtxx{\Delta}_{\mtx{X}}$ should be regarded as
a stochastic estimate for the variance of the random matrix $\mtx{X}$. Indeed,
%
%e2.5 #&#
\begin{equation}
\label{eqn:mean-delta} \Expect[ \mtxx{\Delta}_{\mtx{X}} ] = \Expect
\mtx{X}^2.
\end{equation}
This identity follows %
from Lemma~\ref{lem:exchange} with the choice $\mtx{F}(\mtx{X}) =
\mtx{X}$.
%

%

%s2.4 #&#
\subsection{\texorpdfstring{Example: A sum of independent random matrices.}
{Example: A sum of independent random matrices}} \label
{sec:indep-sum}

To make the definitions in this section more vivid, we describe a
simple but important example of a matrix Stein
pair. Consider an independent sequence $Z \mathrell=( \mtx
{Y}_1, \ldots,
\mtx{Y}_n )$ of random Hermitian matrices that
satisfies $\Expect\mtx{Y}_k = \mtx{0}$ and $\Expect\llVert
{\mtx{Y}_k} \rrVert ^2
< \infty$ for each $k$.
Introduce the random series
\[
\mtx{X} \mathrell=\mtx{Y}_1 + \cdots+ \mtx{Y}_n.
\]

Let us explain how to build a good matrix Stein pair $(\mtx{X}, \mtx
{X}')$. We need the exchangeable counterpart $\mtx{X}'$ to have the
same distribution as $\mtx{X}$, but it should also be close to $\mtx
{X}$ so that we can control the conditional variance. To achieve these
goals, we construct $\mtx{X}'$ by picking a summand from $\mtx{X}$ at
random and replacing it with a fresh copy.

Formally, let $\mtx{Y}_k'$ be an independent copy of $\mtx{Y}_k$ for
each index $k$,
and draw a random index $K$ uniformly from $\{ 1, \ldots, n \}$ and
independently
from everything else.
Define the random sequence
\[
Z' \mathrell=\bigl(\mtx{Y}_1, \ldots,
\mtx{Y}_{K-1}, \mtx {Y}_K',
\mtx{Y}_{K+1}, \ldots, \mtx{Y}_n\bigr).
\]
One can check that $(Z, Z')$
forms an exchangeable pair. The random matrix
\[
\mtx{X}' \mathrell=\mtx{Y}_1 + \cdots+
\mtx{Y}_{K-1} + \mtx{Y}_K' + \mtx
{Y}_{K+1} + \cdots+ \mtx{Y}_n %
\]
is thus an exchangeable counterpart for $\mtx{X}$.
To verify that $(\mtx{X}, \mtx{X}')$ is a Stein pair, calculate that
\begin{eqnarray*}
\Expect\bigl[\mtx{X} - \mtx{X}' \vertt Z \bigr] &=& \Expect\bigl[
\mtx{Y}_K - \mtx{Y}_K' \vertt Z \bigr]
\\
&=& \frac{1}{n} \sum_{k=1}^n \Expect
\bigl[ \mtx{Y}_k - \mtx{Y}_k' \vertt Z
\bigr] = \frac{1}{n} \sum_{k=1}^n
\mtx{Y}_k = \frac{1}{n} \mtx{X}.
\end{eqnarray*}
The third identity holds because $\mtx{Y}_k'$ is a centered random
matrix that is independent from $Z$. Therefore, $(\mtx{X}, \mtx{X}')$
is a matrix Stein pair with scale factor $\alpha= n^{-1}$.

Next, we compute the conditional variance:
%
%e2.6 #&#
\begin{eqnarray}
\label{eqn:indep-sum-DeltaX} \mtxx{\Delta}_{\mtx{X}} &=& \frac{n}{2} \cdot\Expect
\bigl[ \bigl(\mtx{X} - \mtx{X}'\bigr)^2 \vertt Z \bigr]
\nonumber
\\
&=& \frac{n}{2} \cdot\frac{1}{n} \sum
_{k=1}^n \Expect \bigl[ \bigl(\mtx{Y}_k
- \mtx{Y}_k'\bigr)^2 \vertt Z \bigr]
\nonumber
\\[-8pt]
\\[-8pt]
\nonumber
&=& \frac{1}{2} \sum_{k=1}^n \bigl[
\mtx{Y}_k^2 - \mtx{Y}_k \bigl(\Expect
\mtx{Y}_k'\bigr) - \bigl(\Expect\mtx{Y}_k'
\bigr) \mtx{Y}_k + \Expect\bigl(\mtx{Y}_k'
\bigr)^2 \bigr]
\\
&=& \frac{1}{2} \sum_{k=1}^n \bigl(
\mtx{Y}_k^2 + \Expect \mtx {Y}_k^2
\bigr).\nonumber
\end{eqnarray}
For the third relation, expand the square and invoke the pull-through
property of conditional expectation. We may drop the conditioning
because $\mtx{Y}_k'$ is independent from $Z$. In the last line, we
apply the property that $\mtx{Y}_k'$ has the same distribution as~$\mtx{Y}_k$.

Expression \eqref{eqn:indep-sum-DeltaX} shows that we can control the
size of the conditional expectation uniformly if we can control the
size of the individual summands. This example also teaches us that we
may use the symmetries of the distribution of the random matrix to
construct a matrix Stein pair.

%

%

%

%

%

%s3 #&#
\section{\texorpdfstring{Exponential moments and eigenvalues of a random matrix.}
{Exponential moments and eigenvalues of a random matrix}}\label
{sec:trace-moments}

Our main goal in this paper is to study the behavior of the extreme
eigenvalues of a random
Hermitian matrix. In Section~\ref{sec:matrix-laplace}, we describe an
approach to this
problem that parallels the classical Laplace transform method
for scalar random variables. The adaptation to the matrix setting leads
us to consider
the \emph{trace} of the moment generating function (m.g.f.) of a random matrix.
After presenting this background, we explain how the method of
exchangeable pairs can be used
to control the growth of the trace m.g.f. This result, which appears in
Section~\ref{sec:d-trace-mgf},
is the key to our exponential concentration bounds for random matrices.

%

%

%

%s3.1 #&#
\subsection{\texorpdfstring{Standard matrix functions.}{Standard matrix functions}} \label{sec:matrix-function}

Before entering the discussion, recall that a \emph{standard matrix function}
is obtained by applying a real function to the eigenvalues of a
Hermitian matrix.
Higham \cite{Hig08Functions-Matrices} provides an excellent treatment
of this
concept.

%

%
%de3.1 #&#
\begin{defn}[(Standard matrix function)] \label{def:std-fn}
Let $f \dvtx I \to\R$ be a function on an interval $I$ of the real line.
Suppose that $\mtx{A} \in\mathbb{H}^{d}(I)$ has the eigenvalue
decomposition $\mtx{A} = \mtx{Q} \cdot\diag(\lambda_1, \ldots,
\lambda
_d) \cdot\mtx{Q}^*$ where $\mtx{Q}$ is a unitary matrix. Then
the matrix extension
$
f(\mtx{A}) \mathrell=\mtx{Q} \cdot\diag(f(\lambda_1),
\ldots,
f(\lambda
_d)) \cdot\mtx{Q}^*.
$
\end{defn}

The \emph{spectral mapping theorem} states that, if $\lambda$ is an
eigenvalue of $\mtx{A}$, then $f(\lambda)$ is an eigenvalue of
$f(\mtx{A})$.
This fact follows %
from Definition~\ref{def:std-fn}.

When we apply a familiar scalar function to a Hermitian matrix, we are
always referring to a standard matrix function. For instance,
$\llvert {\mtx {A}} \rrvert $ is the matrix absolute value, $\exp
(\mtx{A})$ is the matrix
exponential, and $\log(\mtx{A})$ is the matrix logarithm. The latter is
defined only for positive-definite matrices.
%

%s3.2 #&#
\subsection{\texorpdfstring{The matrix Laplace transform method.}
{The matrix Laplace transform method}} \label{sec:matrix-laplace}

Let us introduce a matrix variant of the classical moment generating function.
We learned this definition from Ahlswede--Winter \cite{AW02Strong-Converse},
Appendix.
%

%

%

%de3.2 #&#
\begin{defn}[(Trace m.g.f.)]
Let $\mtx{X}$ be a random Hermitian matrix. The
\emph{(normalized) trace moment generating function} of $\mtx{X}$ is
defined as
\[
m(\theta) \mathrell=m_{\mtx{X}}(\theta) \mathrell=
\Expect\ntr\econst ^{\theta\mtx{X}} \qquad\mbox{for $\theta\in\R$.}
\]
We admit the possibility that the expectation may not exist for all %
$\theta$.
\end{defn}

Ahlswede and Winter \cite{AW02Strong-Converse}, Appendix, had the insight that
the classical Laplace transform method could be extended to the matrix
setting by
replacing the classical m.g.f. with the trace m.g.f. This adaptation allows
us to
obtain concentration inequalities for the extreme eigenvalues of a random
Hermitian matrix using methods from matrix analysis. %
The following proposition distills results from the
papers \cite
{AW02Strong-Converse,Oli10Sums-Random,Tro11User-Friendly-FOCM,CGT11Masked-Sample}.
%

%pr3.3 #&#
\begin{prop} [(Matrix Laplace transform method)] \label{prop:matrix-laplace}
Let $\mtx{X} \in\mathbb{H}^{d}$ be a random matrix with %
trace m.g.f.
$m(\theta) \mathrell=\Expect\ntr\econst^{\theta\mtx
{X}}$. For each $t
\in\R$,
%
%e3.1 #&#
%e3.2 #&#
\begin{eqnarray}
\mathbb{P} \bigl\{ {\lambda_{\max} ( \mtx{X}) \geq t} \bigr\} &\leq& d
\cdot\inf_{\theta> 0}  \exp\bigl\{ -\theta t + \log m(\theta) \bigr
\}, \label{eqn:laplace-upper-tail}
\\
\mathbb{P} \bigl\{ {\lambda_{\min}(\mtx{X}) \leq t} \bigr\} &\leq& d
\cdot\inf_{\theta< 0}  \exp\bigl\{ -\theta t + \log m(\theta) \bigr
\}. \label{eqn:laplace-lower-tail}
\end{eqnarray}
Furthermore,
%
%e3.3 #&#
%e3.4 #&#
\begin{eqnarray}
\Expect\lambda_{\max}(\mtx{X})& \leq&\inf_{\theta> 0}
\frac{1}{\theta} \bigl[\log d + \log m(\theta)\bigr], \label{eqn:laplace-upper-mean}
\\
\Expect\lambda_{\min}(\mtx{X})& \geq&\sup_{\theta< 0}  \frac{1}{\theta} \bigl[\log d + \log m(\theta)\bigr]. \label{eqn:laplace-lower-mean}
\end{eqnarray}
\end{prop}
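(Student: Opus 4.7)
The plan is to adapt the classical Chernoff / Cramér--Chernoff argument to the matrix setting. Two ingredients replace their scalar counterparts: the spectral mapping theorem (to pass between scalar functions of the eigenvalues of $\mtx{X}$ and eigenvalues of functions of $\mtx{X}$) and the dimensional estimate $\lambda_{\max}(\mtx{A}) \le \trace(\mtx{A}) = d \cdot \ntr(\mtx{A})$, valid for every $\mtx{A} \psdge \mtx{0}$. This last step is precisely what introduces the factor $d$ into all four conclusions.

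For the upper tail~\eqref{eqn:laplace-upper-tail}, I would fix $\theta > 0$, use the strict monotonicity of $s \mapsto \econst^{\theta s}$ together with the spectral mapping theorem to obtain
\[
\Prob{\lambda_{\max}(\mtx{X}) \ge t} = \Prob{\lambda_{\max}(\econst^{\theta \mtx{X}}) \ge \econst^{\theta t}},
\]
dominate $\lambda_{\max}(\econst^{\theta \mtx{X}}) \le \trace(\econst^{\theta \mtx{X}})$ on the positive cone, and apply scalar Markov to the nonnegative random variable $\trace(\econst^{\theta \mtx{X}})$, whose expectation equals $d \cdot m(\theta)$. Taking the infimum over $\theta > 0$ yields~\eqref{eqn:laplace-upper-tail}. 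The lower tail~\eqref{eqn:laplace-lower-tail} follows from the same template with $\theta < 0$: the now-decreasing map $s \mapsto \econst^{\theta s}$ sends $\lambda_{\min}(\mtx{X})$ to $\lambda_{\max}(\econst^{\theta \mtx{X}})$ via the spectral mapping theorem, after which the rest of the argument is unchanged.

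For the expectation bounds~\eqref{eqn:laplace-upper-mean}--\eqref{eqn:laplace-lower-mean}, I would start from the identity $\lambda_{\max}(\mtx{X}) = \theta^{-1} \log \lambda_{\max}(\econst^{\theta \mtx{X}})$ for $\theta > 0$, apply Jensen's inequality to the concave function $\log$ to move the expectation inside, bound $\Expect \lambda_{\max}(\econst^{\theta \mtx{X}}) \le \Expect \trace(\econst^{\theta \mtx{X}}) = d \, m(\theta)$, and then optimize over $\theta$. The lower mean bound runs the same argument with $\theta < 0$; the factor $\theta^{-1}$ is negative, which reverses inequalities and converts the outer $\inf$ into a $\sup$.

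I do not anticipate a genuine obstacle: every step is a routine combination of the spectral mapping theorem, the bound $\lambda_{\max} \le \trace$ on the psd cone, and either Markov's or Jensen's inequality. The only place requiring careful book-keeping is the sign tracking for $\theta < 0$, where both the monotonicity of $s \mapsto \econst^{\theta s}$ and the direction of the inequality after dividing by $\theta$ must be flipped; getting both reversals correct is what explains the asymmetry between $\inf_{\theta<0}$ in~\eqref{eqn:laplace-lower-tail} and $\sup_{\theta<0}$ in~\eqref{eqn:laplace-lower-mean}.
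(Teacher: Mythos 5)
Your proof is correct and follows essentially the same route as the paper: Markov (resp.\ Jensen) after scalarizing $\lambda_{\max}$ via the spectral mapping theorem, then the bound $\lambda_{\max}(\mtx{A})\le\trace\mtx{A}$ for psd $\mtx{A}$ to introduce the factor $d$, with the sign of $\theta$ handling $\lambda_{\min}$. The only cosmetic differences are that you apply the trace domination before Markov (the paper does it after, inside the expectation) and you invoke Jensen for concave $\log$ where the paper invokes it for convex $\exp$; these are equivalent.
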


Estimates \eqref{eqn:laplace-upper-mean} and \eqref
{eqn:laplace-lower-mean} for the expectations are usually sharp up to
the logarithm of the dimension. In many situations, tail bounds \eqref
{eqn:laplace-upper-tail} and \eqref{eqn:laplace-lower-tail} are
reasonable for moderate $t$, but they tend to overestimate the
probability of a large deviation.
Note that, in general, we cannot dispense with the dimensional factor $d$.
See \cite{Tro11User-Friendly-FOCM}, Section~4, for a detailed
discussion of these issues.
Additional inequalities for the interior eigenvalues can be established
using the
minimax Laplace transform method \cite{GittensTr11}.

\begin{pf*}{Proof of Proposition \ref{prop:matrix-laplace}}
To establish \eqref{eqn:laplace-upper-tail}, fix $\theta> 0$. Owing to
Markov's inequality,
\begin{eqnarray*}
\mathbb{P} \bigl\{ {\lambda_{\max} (\mtx{X}) \geq t} \bigr\} &=&
\mathbb{P} \bigl\{ {\econst^{\lambda_{\max}(\theta\mtx{X})} \geq\econst ^{\theta t}} \bigr\}
\leq\econst^{-\theta t} \cdot\Expect\econst^{\lambda_{\max}
(\theta
\mtx{X})}
\\
&= &\econst^{-\theta t} \cdot\Expect\lambda_{\max} \bigl(\econst
^{\theta\mtx{X}} \bigr) \leq\econst^{-\theta t} \cdot\Expect\trace
\econst^{\theta\mtx{X}}.
\end{eqnarray*}
The third relation depends on the spectral mapping theorem and the
monotonicity of the exponential. The last inequality holds because the
trace of a positive-definite matrix exceeds its maximum eigenvalue.
Identify the normalized trace m.g.f., and take the infimum over $\theta$
to complete the argument.

The proof of \eqref{eqn:laplace-lower-tail} parallels the proof
of \eqref{eqn:laplace-upper-tail}.
For $\theta< 0$,
\[
\mathbb{P} \bigl\{ {\lambda_{\min}(\mtx{X}) \leq t} \bigr\} = \mathbb{P}
\bigl\{ {\theta\lambda_{\min}(\mtx{X}) \geq\theta t} \bigr\} = \mathbb{P}
\bigl\{ {\lambda_{\max}(\theta\mtx{X}) \geq\theta t} \bigr\}.
\]
We used the property that $-\lambda_{\min}(\mtx{A}) = \lambda_{\max
}(-\mtx{A})$
for each Hermitian matrix $\mtx{A}$. The rest of the argument is the
same as in the preceding paragraph.

For the expectation bound \eqref{eqn:laplace-upper-mean}, fix $\theta>
0$. Jensen's inequality yields
\[
\Expect\lambda_{\max}(\mtx{X}) = \theta^{-1} \Expect
\lambda_{\max}(\theta\mtx{X}) \leq\theta^{-1} \log\Expect
\econst^{\lambda_{\max}(\theta\mtx{X})} \leq\theta^{-1} \log\Expect\trace
\econst^{\theta\mtx{X}}.
\]
The justification is the same as above. Identify the normalized trace
m.g.f., and take the infimum over $\theta> 0$.
Similar considerations yield \eqref{eqn:laplace-lower-mean}.
\end{pf*}

%

%s3.3 #&#
\subsection{\texorpdfstring{Studying the trace m.g.f. with exchangeable pairs.}
{Studying the trace m.g.f. with exchangeable pairs}}

The technical difficulty in the matrix Laplace transform method
arises because we need to estimate the trace m.g.f. Previous
authors have applied deep results from matrix analysis to accomplish
this bound:
the Golden--Thompson inequality is central to \cite
{AW02Strong-Converse,Oli10Concentration-Adjacency,Oli10Sums-Random},
while Lieb's result \cite{Lie73Convex-Trace}, Theorem~6,
animates \cite
{Tro11Freedmans-Inequality,Tro11User-Friendly-FOCM,HKZ12Dimension-Free-Tail}.

In this paper, we develop a fundamentally different technique for
studying the trace m.g.f.
The main idea is to control the \emph{growth} of the trace m.g.f. by bounding
its \emph{derivative}.
To see why we have adopted this strategy, consider a random
Hermitian matrix $\mtx{X}$, and observe that the
derivative of its trace m.g.f. can be written as
\[
m'(\theta) = \Expect\ntr \bigl[ \mtx{X} \econst^{\theta\mtx{X}}
\bigr]
\]
under appropriate regularity conditions. This expression has just the
form that we need to invoke the method of exchangeable pairs,
Lemma~\ref
{lem:exchange},
with $\mtx{F}(\mtx{X}) = \econst^{\theta\mtx{X}}$. We obtain
%
%e3.5 #&#
\begin{equation}
\label{eqn:m-prime-temp} m'(\theta) = \frac{1}{2 \alpha} \Expect\ntr \bigl[
\bigl(\mtx{X} - \mtx{X}'\bigr) \bigl( \econst^{\theta\mtx{X}} -
\econst^{\theta\mtx{X}'} \bigr) \bigr].
\end{equation}
This formula strongly suggests that we should apply a mean value
theorem to
control the derivative; we establish the result that we need in
Section~\ref{sec:mvti} below.
Ultimately, this argument leads to a differential inequality for
$m'(\theta)$,
which we can integrate to obtain an estimate for $m(\theta)$.

The technique of bounding the derivative of an m.g.f. lies at the heart
of the log-Sobolev method for studying concentration
phenomena \cite{Led01Concentration-Measure}, Chapter~5.
Recently, Chatterjee \cite
{Cha07Steins-Method,Cha08Concentration-Inequalities} demonstrated
that the method of exchangeable pairs provides another way to control
the derivative of
an m.g.f. Our arguments closely follow the pattern set by Chatterjee;
the novelty inheres in the
extension of these ideas to the matrix setting
and the striking applications that this extension permits.

%

%

%

%s3.4 #&#
\subsection{\texorpdfstring{The mean value trace inequality.}{The mean value trace inequality}} \label{sec:mvti}

To bound expression \eqref{eqn:m-prime-temp} for the derivative of the
trace m.g.f., we need a matrix generalization of the mean value theorem
for a function with a convex derivative. We state the result in full
generality because it plays a role later.

%le3.4 #&#
\begin{lemma}[(Mean value trace inequality)] \label{lem:mvti}
Let $I$ be an interval of the real line. Suppose that $g \dvtx I \to\R
$ is
a weakly increasing function and that $h \dvtx I \to\R$ is a function
whose derivative $h'$ is convex. For all matrices $\mtx{A}, \mtx{B}
\in
\mathbb{H}^{d}(I)$, it holds that
\begin{eqnarray*}
&&\ntr \bigl[ \bigl(g(\mtx{A}) - g(\mtx{B})\bigr) \cdot\bigl(h(
\mtx{A}) - h(\mtx{B})\bigr) \bigr]
\\
&&\qquad \leq\tfrac{1}{2} \ntr \bigl[ \bigl(g(\mtx{A}) - g(\mtx{B})\bigr) \cdot(\mtx{A}
- \mtx{B}) \cdot\bigl(h'(\mtx{A}) + h'(\mtx{B})\bigr)
\bigr].
\end{eqnarray*}
When $h'$ is concave, the inequality is reversed. The same results hold
for the standard trace.
\end{lemma}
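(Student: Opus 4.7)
The plan is to reduce the matrix trace inequality to its scalar counterpart by expanding both sides in the joint eigenbases of $\mtx{A}$ and $\mtx{B}$. I would first prove the scalar version: for all $a, b \in I$,
\begin{equation*}
(g(a) - g(b))(h(a) - h(b)) \leq \tfrac{1}{2}(g(a) - g(b))(a - b)(h'(a) + h'(b)).
\end{equation*}
Assume without loss of generality $a \geq b$. Convexity of $h'$ on $[b, a]$ and the trapezoidal inequality give $h(a) - h(b) = \int_b^a h'(s) \idiff{s} \leq \tfrac{1}{2}(a-b)(h'(a) + h'(b))$, and multiplying by the nonnegative quantity $g(a) - g(b)$ (using that $g$ is weakly increasing) preserves the inequality. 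The case $a < b$ follows because the trapezoidal bound reverses but so does the sign of $g(a) - g(b)$, and the two reversals cancel. When $h'$ is concave, the trapezoidal bound reverses, giving the reversed scalar inequality.

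To promote this to the matrix setting, diagonalize $\mtx{A} = \mtx{U} \diag(a_i) \mtx{U}^\adj$ and $\mtx{B} = \mtx{V} \diag(b_j) \mtx{V}^\adj$ and set $w_{ij} \defby (\mtx{U}^\adj \mtx{V})_{ij}$, so that the array $(|w_{ij}|^2)$ is doubly stochastic. A direct calculation, using the cyclic property of the trace along with the fact that two standard matrix functions of the same Hermitian matrix commute, yields the identities
\begin{align*}
\trace\bigl[(g(\mtx{A}) - g(\mtx{B}))(h(\mtx{A}) - h(\mtx{B}))\bigr] &= \sum_{i,j} |w_{ij}|^2 \, (g(a_i) - g(b_j))(h(a_i) - h(b_j)), \\
\trace\bigl[(g(\mtx{A}) - g(\mtx{B}))(\mtx{A} - \mtx{B})(h'(\mtx{A}) + h'(\mtx{B}))\bigr] &= \sum_{i,j} |w_{ij}|^2 \, (g(a_i) - g(b_j))(a_i - b_j)(h'(a_i) + h'(b_j)).
\end{align*}
Multiplying the scalar inequality by the nonnegative weight $|w_{ij}|^2$ and summing over $i, j$ then yields the inequality for the standard trace; dividing through by $d$ delivers the normalized-trace statement. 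The concave case is handled identically using the reversed scalar inequality.

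The only delicate step is verifying the second trace identity. Expanding $(g(\mtx{A}) - g(\mtx{B}))(\mtx{A} - \mtx{B})(h'(\mtx{A}) + h'(\mtx{B}))$ produces eight trace terms involving three matrix factors each; for a mixed term such as $\trace[g(\mtx{A}) \mtx{B} h'(\mtx{A})]$ one would first commute $g(\mtx{A})$ and $h'(\mtx{A})$ (both functions of $\mtx{A}$) and cycle the trace to rewrite it as $\trace[g(\mtx{A}) h'(\mtx{A}) \mtx{B}]$, which then expands in the eigenbases as $\sum_{i,j} |w_{ij}|^2 g(a_i) h'(a_i) b_j$. I expect this bookkeeping to be the main obstacle; the remaining seven terms are entirely analogous, and once all eight align with the scalar expansion of $(g(a_i) - g(b_j))(a_i - b_j)(h'(a_i) + h'(b_j))$ the proof is complete.
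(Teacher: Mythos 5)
Your proof is correct and takes essentially the same route as the paper: establish the scalar inequality via convexity of $h'$ together with the sign information from $g$ being weakly increasing, then lift to the trace by weighting the scalar bound with the doubly stochastic array $\left(\abs{w_{ij}}^2\right)$ arising from the joint diagonalization, finally reorganizing the three-factor terms by commuting functions of the same matrix and cycling the trace. The only cosmetic difference is that you rederive the lifting step explicitly, whereas the paper delegates it to the Generalized Klein Inequality (Proposition~\ref{prop:klein-inequality}), whose proof is precisely this diagonalization argument.
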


To prove Lemma~\ref{lem:mvti}, we require a trace inequality \cite{Pet94Survey-Certain},
Proposition~3, that follows from the definition
of a matrix function and the spectral theorem for Hermitian matrices.
%

%pr3.5 #&#
\begin{prop}[(Generalized Klein inequality)] \label{prop:klein-inequality}
Let $u_1, \ldots,  u_n$ and $v_1,\ldots, v_n$ be real-valued functions on
an interval $I$ of the real line. Suppose %
%
%e3.6 #&#
\begin{equation}
\label{eqn:klein-hyp} \sum_{k} u_k(a)
v_k(b) \geq0 \qquad\mbox{for all $a, b \in I$.}
\end{equation}
Then
\[
\ntr \biggl[ \sum_{k} u_k(\mtx{A})
v_k(\mtx{B}) \biggr] \geq0 \qquad\mbox{for all $\mtx{A}, \mtx{B} \in
\mathbb{H}^{d}(I)$.}
\]
\end{prop}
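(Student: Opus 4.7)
The plan is to reduce the matrix trace inequality to the pointwise scalar hypothesis by diagonalizing $\mtx{A}$ and $\mtx{B}$ simultaneously against each other through their spectral decompositions. Since $\mtx{A}, \mtx{B} \in \SymInt{d}{I}$, each has eigenvalues in $I$, so the scalar hypothesis~\eqref{eqn:klein-hyp} applies to every pair of eigenvalues we encounter. The task is then to organize the trace as a nonnegative combination of quantities of the form $\sum_k u_k(a) v_k(b)$.

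First I would write out spectral resolutions
$$
\mtx{A} = \sum\nolimits_{i=1}^d a_i \, \mtx{P}_i
\quad\text{and}\quad
\mtx{B} = \sum\nolimits_{j=1}^d b_j \, \mtx{Q}_j,
$$
where the $a_i, b_j \in I$ are eigenvalues and $\mtx{P}_i = \vct{p}_i \vct{p}_i^\adj$, $\mtx{Q}_j = \vct{q}_j \vct{q}_j^\adj$ are the associated rank-one orthogonal projectors (possibly repeated if there are multiplicities). By Definition~\ref{def:std-fn}, $u_k(\mtx{A}) = \sum_i u_k(a_i) \mtx{P}_i$ and $v_k(\mtx{B}) = \sum_j v_k(b_j) \mtx{Q}_j$.

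Next I would compute
$$
\trace \Big[ \sum\nolimits_k u_k(\mtx{A}) \, v_k(\mtx{B}) \Big]
	= \sum\nolimits_{i,j} \trace(\mtx{P}_i \mtx{Q}_j) \cdot \sum\nolimits_k u_k(a_i) \, v_k(b_j).
$$
The crucial observation is that $\trace(\mtx{P}_i \mtx{Q}_j) = \abssqip{\vct{p}_i}{\vct{q}_j} \geq 0$, so each coefficient in the outer sum is nonnegative. The hypothesis~\eqref{eqn:klein-hyp} applied at $a = a_i$ and $b = b_j$ makes the inner sum nonnegative as well, and the whole expression is therefore nonnegative. Dividing by $d$ yields the normalized-trace statement.

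I do not anticipate any real obstacle: the argument is a bilinear decomposition, and the only subtle point is verifying that $\trace(\mtx{P}_i \mtx{Q}_j) \geq 0$, which is immediate since this trace equals a squared modulus of an inner product between eigenvectors (equivalently, the product of two psd matrices has nonnegative trace). The only bookkeeping care needed is to handle repeated eigenvalues consistently, which is taken care of by choosing an orthonormal eigenbasis rather than distinct spectral projectors.
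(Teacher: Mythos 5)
Your proof is correct and is the standard spectral-decomposition argument: the paper does not reproduce a proof but instead cites~\cite[Prop.~3]{Pet94:Survey-Certain}, and the argument given there is exactly this one, diagonalizing $\mtx{A}$ and $\mtx{B}$, expanding the trace into a nonnegative combination $\sum_{i,j} \abssqip{\vct{p}_i}{\vct{q}_j} \cdot \sum_k u_k(a_i) v_k(b_j)$, and invoking the scalar hypothesis termwise. The one delicate point---that $\trace(\mtx{P}_i \mtx{Q}_j) \geq 0$---you handle correctly, and your remark about using an orthonormal eigenbasis to manage repeated eigenvalues closes the only loose end.
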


With the generalized Klein inequality, %
we can establish Lemma~\ref{lem:mvti} by developing the appropriate
scalar inequality.\vadjust{\goodbreak}

\begin{pf*}{Proof of Lemma~\ref{lem:mvti}}
Fix $a, b \in I$. Since $g$ is weakly increasing, $(g(a) - g(b)) \cdot
(a - b)\geq0$. The fundamental theorem of calculus and the convexity
of $h'$ yield the estimate
%
%e3.7 #&#
\begin{eqnarray}
\label{eqn:scalar-mvi} %
&&\bigl(g(a) - g(b)\bigr) \cdot\bigl(h(a) - h(b)\bigr)
\nonumber
\\
&&\qquad= \bigl(g(a) - g(b)\bigr) \cdot(a - b) \int_0^1
h'\bigl(\tau a + (1-\tau) b\bigr) \,\mathrm{d} {\tau}
\nonumber
\\[-8pt]
\\[-8pt]
\nonumber
&&\qquad\leq\bigl(g(a) - g(b)\bigr) \cdot(a - b) \int_0^1
\bigl[ \tau\cdot h'(a) + (1-\tau) \cdot h'(b) \bigr]
\,\mathrm{d} {\tau}
\\
&&\qquad= \frac{1}{2} \bigl[ \bigl(g(a) - g(b)\bigr) \cdot(a-b) \cdot
\bigl(h'(a) + h'(b)\bigr) \bigr].\nonumber
\end{eqnarray}
The inequality is reversed when $h'$ is concave.

Bound \eqref{eqn:scalar-mvi} can be written in the form \eqref
{eqn:klein-hyp} by expanding the products and collecting terms
depending on $a$ into functions $u_k(a)$ and terms depending on $b$
into functions $v_k(b)$. Proposition~\ref{prop:klein-inequality} then
delivers a trace inequality, which can be massaged into the desired
form using the cyclicity of the trace and the fact that standard
functions of the same matrix commute.
We omit the algebraic details.
\end{pf*}

%re3.6 #&#
\begin{rem}
We must warn the reader that the proof of Lemma~\ref{lem:mvti}
succeeds because
the trace contains a product of \emph{three} terms involving \emph{two}
matrices. The obstacle to proving more general results is that we cannot
reorganize expressions like $\trace(\mtx{ABAB})$ and $\trace(\mtx
{ABC})$ at will.
\end{rem}

%s3.5 #&#
\subsection{\texorpdfstring{Bounding the derivative of the trace m.g.f.}
{Bounding the derivative of the trace m.g.f.}} \label{sec:d-trace-mgf}

The central result in this section applies the method of exchangeable pairs
and the mean value trace inequality
to bound the derivative of the trace m.g.f. in terms of the conditional variance.
This is the most important step in our theory on the exponential
concentration of random matrices.

%

%

%le3.7 #&#
\begin{lemma}[(The derivative of the trace m.g.f.)] \label{lem:mgf-derivative}
Suppose that $(\mtx{X}, \mtx{X}') \in\mathbb{H}^{d} \times\mathbb
{H}^{d}$ is a
matrix Stein pair,
and assume that $\mtx{X}$ is almost surely bounded in norm. %
Define the %
trace m.g.f. $m(\theta) \mathrell=\Expect\ntr\econst
^{\theta\mtx{X}}$.
Then
%
%e3.8 #&#
%e3.9 #&#
\begin{eqnarray}
m'(\theta) &\leq&\theta\cdot\Expect\ntr \bigl[ \mtx{
\Delta}_{\mtx{X}} \econst ^{\theta\mtx{X}} \bigr] \qquad\mbox{when $\theta\geq0$};
\label{eqn:m-prime-Delta+}
\\
m'(\theta) &\geq&\theta\cdot\Expect\ntr \bigl[ \mtx{
\Delta}_{\mtx{X}} \econst ^{\theta\mtx{X}} \bigr]\qquad \mbox{when $\theta\leq0$.}
\label{eqn:m-prime-Delta-}
\end{eqnarray}
The conditional variance $\mtxx{\Delta}_{\mtx{X}}$ is defined
in \eqref
{eqn:conditional-variance}.
\end{lemma}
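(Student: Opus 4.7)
The plan is to express $m'(\theta)$ as an expected trace, rewrite it via the method of exchangeable pairs, apply the mean value trace inequality, and finally collapse the resulting expression into the conditional variance $\mtx{\Delta}_{\mtx{X}}$.

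First I would justify differentiation under the expectation and trace. Since $\mtx{X}$ is almost surely bounded in norm, the integrand $\ntr \econst^{\theta \mtx{X}}$ and its derivative in $\theta$ are uniformly bounded on any compact interval, so dominated convergence gives
$$
m'(\theta) = \Expect \ntr\bigl[\mtx{X} \econst^{\theta \mtx{X}}\bigr].
$$
The boundedness of $\mtx{X}$ (and hence of $\mtx{X}'$, since they share a distribution) also certifies the regularity condition~\eqref{eqn:regularity-mep} for $\mtx{F}(\mtx{X}) \defby \econst^{\theta \mtx{X}}$. Applying Lemma~\ref{lem:exchange} inside the normalized trace (which is a bounded linear functional and so commutes with the expectation) yields
$$
m'(\theta) = \frac{1}{2\alpha}\, \Expect \ntr\bigl[(\mtx{X} - \mtx{X}')(\econst^{\theta \mtx{X}} - \econst^{\theta \mtx{X}'})\bigr].
$$

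Next I would invoke Lemma~\ref{lem:mvti} with the choices $g(x) = x$ (weakly increasing) and $h(x) = \econst^{\theta x}$. Differentiating, $h'(x) = \theta\, \econst^{\theta x}$ and $h'''(x) = \theta^3\, \econst^{\theta x}$, so $h'$ is convex when $\theta \geq 0$ and concave when $\theta \leq 0$. Focusing on $\theta \geq 0$, Lemma~\ref{lem:mvti} gives pointwise
$$
\ntr\bigl[(\mtx{X} - \mtx{X}')(\econst^{\theta \mtx{X}} - \econst^{\theta \mtx{X}'})\bigr]
\leq \frac{\theta}{2}\, \ntr\bigl[(\mtx{X} - \mtx{X}')^2 \, (\econst^{\theta \mtx{X}} + \econst^{\theta \mtx{X}'})\bigr],
$$
where I have used that standard functions of the same matrix commute to combine $(\mtx{X} - \mtx{X}')(\mtx{X}-\mtx{X}')$ into $(\mtx{X}-\mtx{X}')^2$ (this is legitimate because the factor $h'(\mtx{X}) + h'(\mtx{X}')$ is the sum of two pieces, each separately commuting with the appropriate term; use cyclicity of the trace to place it on one side). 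Taking expectations and using the exchangeability of $(\mtx{X},\mtx{X}')$ to identify $\Expect\ntr[(\mtx{X}-\mtx{X}')^2 \econst^{\theta \mtx{X}}] = \Expect\ntr[(\mtx{X}-\mtx{X}')^2 \econst^{\theta \mtx{X}'}]$, the two terms on the right combine to yield
$$
m'(\theta) \leq \frac{\theta}{2\alpha}\, \Expect \ntr\bigl[(\mtx{X} - \mtx{X}')^2 \econst^{\theta \mtx{X}}\bigr].
$$

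Finally I would collapse this to the conditional variance. Since $\mtx{X} = \mtx{\Psi}(Z)$, the matrix $\econst^{\theta \mtx{X}}$ is $\sigma(Z)$-measurable, so the pull-through property of conditional expectation and the tower rule give
$$
\Expect\ntr\bigl[(\mtx{X}-\mtx{X}')^2\, \econst^{\theta \mtx{X}}\bigr]
= \Expect\ntr\Bigl[\,\Expect\bigl[(\mtx{X}-\mtx{X}')^2 \condl Z\bigr]\cdot \econst^{\theta \mtx{X}}\Bigr]
= 2\alpha\, \Expect \ntr\bigl[\mtx{\Delta}_{\mtx{X}}\, \econst^{\theta \mtx{X}}\bigr],
$$
by definition~\eqref{eqn:conditional-variance} of $\mtx{\Delta}_{\mtx{X}}$. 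Substituting proves~\eqref{eqn:m-prime-Delta+}. For $\theta \leq 0$, $h'$ is concave so Lemma~\ref{lem:mvti} reverses; the scalar factor $\theta/(2\alpha)$ is also non-positive, so in the end the inequality on $m'(\theta)$ simply flips direction, yielding~\eqref{eqn:m-prime-Delta-}. The main obstacle I expect is the bookkeeping when applying the mean value trace inequality: one must verify carefully that the trace of the resulting three-matrix product admits the rearrangement into $(\mtx{X}-\mtx{X}')^2\, \econst^{\theta\mtx{X}}$ (and its $\mtx{X}'$ counterpart), which relies on commutativity of $h'(\mtx{X})$ with $\mtx{X}$ together with cyclicity of the trace, plus the exchangeability symmetrization.
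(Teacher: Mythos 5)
Your proof is correct and mirrors the paper's argument essentially line for line: differentiate under the expectation using boundedness and dominated convergence, invoke Lemma~\ref{lem:exchange} with $\mtx{F}(\mtx{X}) = \econst^{\theta\mtx{X}}$, apply the mean value trace inequality with $g$ the identity and $h(s) = \econst^{\theta s}$ (whose derivative is convex for $\theta \geq 0$ and concave for $\theta \leq 0$), symmetrize via exchangeability, and pull through to recover $\mtx{\Delta}_{\mtx{X}}$. One small caution on your final remark: for $\theta \leq 0$ the single reversal from the concavity of $h'$ is all that is needed — the factor $\theta$ is already absorbed into $h'(\mtx{X}) = \theta\econst^{\theta\mtx{X}}$ and does not contribute a second sign flip.
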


\begin{pf}
We begin with the expression for the derivative of the trace m.g.f.,
%
%e3.10 #&#
\begin{equation}
\label{eqn:m-prime-v1} m'(\theta) = \Expect\ntr \biggl[ \frac{\mathrm{d}}{\mathrm{d}{\theta}}
\econst^{\theta\mtx{X}} \biggr] = \Expect\ntr \bigl[ \mtx{X} \econst^{\theta\mtx{X}}
\bigr].
\end{equation}
We can move the derivative inside the expectation because of
the dominated convergence theorem and the boundedness of $\mtx{X}$.

Apply the method of exchangeable pairs, Lemma~\ref{lem:exchange},
with the function $\mtx{F}(\mtx{X}) = \econst^{\theta\mtx{X}}$ to
reach an alternative representation of the derivative \eqref{eqn:m-prime-v1},
%
%e3.11 #&#
\begin{equation}
\label{eqn:m-prime-v2} m'(\theta) = \frac{1}{2\alpha} \Expect\ntr \bigl[
\bigl(\mtx{X} - \mtx{X}'\bigr) \bigl(\econst^{\theta\mtx{X}} -
\econst^{\theta\mtx{X}'} \bigr) \bigr].
\end{equation}
We have used the boundedness of $\mtx{X}$ to verify the regularity
condition \eqref{eqn:regularity-mep}.

Expression \eqref{eqn:m-prime-v2} %
is perfectly suited for an application of the mean value trace
inequality, Lemma~\ref{lem:mvti}.
First, assume that $\theta\geq0$, and consider the function
$h \dvtx s \mapsto\econst^{\theta s}$. The derivative $h' \dvtx s
\mapsto
\theta\econst^{\theta s}$ is convex, so Lemma~\ref{lem:mvti} implies that
\begin{eqnarray*}
m'(\theta) &\leq&\frac{\theta}{4\alpha} \Expect\ntr \bigl[ \bigl(\mtx{X}
- \mtx{X}' \bigr)^2 \cdot \bigl(\econst^{\theta\mtx{X}} +
\econst ^{\theta\mtx{X}'} \bigr) \bigr]
\\
&=& \frac{\theta}{2\alpha} \Expect\ntr \bigl[ \bigl(\mtx{X} - \mtx{X}'
\bigr)^2 \cdot\econst^{\theta\mtx{X}} \bigr]
\\
&=& \theta\cdot\Expect\ntr \biggl[ \frac{1}{2\alpha} \Expect \bigl[ \bigl(\mtx{X}
- \mtx{X}'\bigr)^2 \vertt Z \bigr] \cdot
\econst^{\theta\mtx{X}} \biggr]. %
\end{eqnarray*}
The second line follows from the fact that $(\mtx{X}, \mtx{X}')$ is an
exchangeable pair.
In the last line, we have used the boundedness of $\mtx{X}$ and $\mtx
{X}'$ to invoke the
pull-through property of conditional expectation.
Identify the conditional variance $\mtxx{\Delta}_{\mtx{X}}$, defined
in \eqref{eqn:conditional-variance},
to complete the argument.

The result for $\theta\leq0$ follows from an analogous argument. In
this case, we simply observe that the derivative of the function $h
\dvtx s
\mapsto\econst^{\theta s}$ is now concave, so the mean value trace
inequality, Lemma~\ref{lem:mvti}, produces a lower bound. The remaining
steps are identical.
\end{pf}

%re3.8 #&#
\begin{rem}[(Regularity conditions)]
To simplify the presentation, we have instated a boundedness assumption
in Lemma~\ref{lem:mgf-derivative}. All the examples we discuss satisfy
this requirement.
When $\mtx{X}$ is unbounded, Lemma~\ref{lem:mgf-derivative} still holds
provided that $\mtx{X}$ meets an integrability condition.
\end{rem}

%s4 #&#
\section{\texorpdfstring{Exponential concentration for bounded random matrices.}
{Exponential concentration for bounded random matrices}}\label
{sec:concentration-bdd}

We are now prepared to establish exponential concentration
inequalities. Our first major result demonstrates
that an almost-sure bound for the conditional variance yields
exponential tail bounds for the extreme eigenvalues of a random
Hermitian matrix.
We can also obtain estimates for the expectation of the extreme eigenvalues.

%th4.1 #&#
\begin{thmm}[(Concentration for bounded random matrices)] \label
{thmm:concentration-bdd}
Consider a matrix Stein pair $(\mtx{X}, \mtx{X}') \in\mathbb{H}^{d}
\times
\mathbb{H}^{d}$.
Suppose there exist nonnegative constants $c, v$ for which the
conditional variance \eqref{eqn:conditional-variance} of the pair satisfies
%
%e4.1 #&#
\begin{equation}
\label{eqn:comparison} \mtxx{\Delta}_{\mtx{X}} \psdle c \mtx{X} + v \Id\qquad\mbox {almost
surely}.
\end{equation}
Then, for all $t \geq0$,
\begin{eqnarray*}
\mathbb{P} \bigl\{ {\lambda_{\min}( \mtx{X} ) \leq-t } \bigr\} &\leq& d
\cdot\exp \biggl\{ \frac{-t^2}{2v} \biggr\},
\\
\mathbb{P} \bigl\{ {\lambda_{\max}( \mtx{X} ) \geq t } \bigr\} &\leq &d
\cdot\exp \biggl\{ -\frac{t}{c} + \frac{v}{c^2} \log \biggl(1 +
\frac{ct}{v} \biggr) \biggr\}
\\
&\leq& d \cdot\exp \biggl\{ \frac{-t^2}{2v + 2ct} \biggr\}.
\end{eqnarray*}
Furthermore,
\begin{eqnarray*}
\Expect\lambda_{\min}(\mtx{X}) &\geq&-\sqrt{2v\log d},
\\
\Expect\lambda_{\max}(\mtx{X}) &\leq& \sqrt{2v\log d} + c \log
d.
\end{eqnarray*}
\end{thmm}

This result may be viewed as a matrix analogue of Chatterjee's
concentration inequality for scalar random variables \cite{Cha07Steins-Method},
Theorem~1.5(ii).
The proof of Theorem~\ref{thmm:concentration-bdd} appears below in
Section~\ref{sec:proof-conc-bdd}.
Before we present the argument, let us explain how the result provides
a short proof of a Hoeffding-type
inequality for matrices.

%

%

%

%

%

%

%

%s4.1 #&#
\subsection{\texorpdfstring{Application: Matrix Hoeffding inequality.}
{Application: Matrix Hoeffding inequality}}

Theorem~\ref{thmm:concentration-bdd} yields an extension of Hoeffding's
inequality \cite{Hoeffding63}
that holds for an independent sum of bounded random matrices.

%co4.2 #&#
\begin{cor}[(Matrix Hoeffding)] \label{cor:hoeffding}
Consider a finite sequence $( \mtx{Y}_k )_{k \geq1}$ of independent
random matrices in $\mathbb{H}^{d}$
and a finite sequence $(\mtx{A}_k)_{k \geq1}$ of deterministic
matrices in $\mathbb{H}^{d}$. Assume that
\[
\Expect\mtx{Y}_k = \mtx{0}\quad \mbox{and}\quad \mtx{Y}_k^2
\psdle\mtx{A}_k^2 \qquad\mbox{almost surely for each index
$k$.}
\]
Then, for all $t\geq0$,
\[
\mathbb{P} \biggl\{ {\lambda_{\max} \biggl( \sum
_{k} \mtx{Y}_k \biggr) \geq t } \biggr\} \leq d
\cdot\econst^{-t^2/2\sigma^2} \qquad\mbox{for } \sigma^2 \mathrell=
\frac{1}{2} \biggl\llVert {\sum_k \bigl(\mtx
{A}_k^2 + \Expect\mtx{Y}_k^2
\bigr) } \biggr\rrVert.
\]
Furthermore,
\[
\Expect\lambda_{\max} \biggl( \sum_{k}
\mtx{Y}_k \biggr) \leq \sigma\sqrt{2 \log d}.
\]
\end{cor}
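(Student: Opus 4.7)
The plan is to reduce the corollary to Theorem~\ref{thm:concentration-bdd} by invoking the independent-sum matrix Stein pair construction already spelled out in Section~\ref{sec:indep-sum}. Specifically, set $\mtx{X} \defby \sum_k \mtx{Y}_k$, let $\mtx{Y}_k'$ be an independent copy of $\mtx{Y}_k$, draw $K$ uniformly from $\{1,\dots,n\}$ independently, and replace the $K$-th summand to obtain $\mtx{X}'$. The hypothesis $\mtx{Y}_k^2 \psdle \mtx{A}_k^2$ implies $\norm{\mtx{Y}_k} \leq \norm{\mtx{A}_k}$ almost surely, so $\mtx{X}$ is almost surely bounded and the standing integrability hypotheses are met; the calculation in Section~\ref{sec:indep-sum} then certifies that $(\mtx{X},\mtx{X}')$ is a matrix Stein pair with scale factor $\alpha = 1/n$.

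Next I would use the identity~\eqref{eqn:indep-sum-DeltaX} to write the conditional variance as
\begin{equation*}
\mtx{\Delta}_{\mtx{X}} = \frac{1}{2} \sum\nolimits_k \bigl( \mtx{Y}_k^2 + \Expect \mtx{Y}_k^2 \bigr).
\end{equation*}
The assumption $\mtx{Y}_k^2 \psdle \mtx{A}_k^2$ holds almost surely, so monotonicity of the semidefinite order gives $\mtx{\Delta}_{\mtx{X}} \psdle \tfrac{1}{2}\sum_k (\mtx{A}_k^2 + \Expect \mtx{Y}_k^2)$ almost surely. Because this last matrix is positive semidefinite, it is dominated by its spectral norm times $\Id$, yielding
\begin{equation*}
\mtx{\Delta}_{\mtx{X}} \psdle \sigma^2 \, \Id \quad \text{almost surely},
\end{equation*}
which is the comparison~\eqref{eqn:comparison} with the constants $c = 0$ and $v = \sigma^2$.

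Finally, I would feed these constants into Theorem~\ref{thm:concentration-bdd}. The upper Bernstein-type tail bound $d \cdot \exp\{-t^2/(2v + 2ct)\}$ specializes with $c=0$ to the subgaussian estimate $d \cdot \econst^{-t^2/2\sigma^2}$, and the expectation bound $\sqrt{2v\log d} + c\log d$ reduces to $\sigma\sqrt{2\log d}$, exactly as claimed. The only point requiring care is confirming that the $c = 0$ boundary case of Theorem~\ref{thm:concentration-bdd} is legitimate: the first displayed tail bound there, $\exp\{-t/c + (v/c^2)\log(1+ct/v)\}$, becomes $\exp\{-t^2/2v\}$ in the limit $c \to 0^+$ (by a standard Taylor expansion of $\log(1+x)$), so the conclusion is unaffected. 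I do not anticipate a real obstacle — the work is essentially bookkeeping — but the one item worth stating cleanly is the passage from the pointwise bound $\mtx{Y}_k^2 \psdle \mtx{A}_k^2$ to the bound on the \emph{average} $\tfrac12(\mtx{Y}_k^2 + \Expect\mtx{Y}_k^2)$, which is where the factor of $\tfrac12$ in the definition of $\sigma^2$ originates.
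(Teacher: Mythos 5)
Your proof is correct and follows essentially the same route as the paper: build the independent-sum matrix Stein pair from Section~\ref{sec:indep-sum}, read off $\mtx{\Delta}_{\mtx{X}}$ from~\eqref{eqn:indep-sum-DeltaX}, bound it by $\sigma^2 \Id$ using $\mtx{Y}_k^2 \psdle \mtx{A}_k^2$, and apply Theorem~\ref{thm:concentration-bdd} with $c=0$, $v=\sigma^2$. The extra remark about the $c\to 0^+$ limit is unnecessary since the third tail bound $d\exp\{-t^2/(2v+2ct)\}$ already makes sense at $c=0$, but it does no harm.
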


\begin{pf}
Let $\mtx{X} = \sum_k \mtx{Y}_k$. Since $\mtx{X}$ is a
sum of
centered, independent random matrices, we can use the matrix Stein pair
constructed in Section~\ref{sec:indep-sum}. According to \eqref
{eqn:indep-sum-DeltaX}, the conditional variance satisfies
\[
\mtxx{\Delta}_{\mtx{X}} = \frac{1}{2} \sum
_k \bigl( \mtx{Y}_k^2 + \Expect
\mtx{Y}_k^2 \bigr) \psdle\sigma^2 \Id
\]
because $\mtx{Y}_k^2 \psdle\mtx{A}_k^2$. Invoke Theorem~\ref
{thmm:concentration-bdd} with $c = 0$ and $v = \sigma^2$
to complete the bound.
\end{pf}

In the scalar setting $d = 1$, Corollary~\ref{cor:hoeffding} reproduces
an inequality of Chatterjee~\cite{Cha07Steins-Method}, Section~1.5,
which itself is an improvement over the classical scalar Hoeffding
bound. In turn, Corollary~\ref{cor:hoeffding}
improves upon the matrix Hoeffding inequality
of \cite{Tro11User-Friendly-FOCM},
Theorem~1.3, in two ways. First, we have
improved the constant in the exponent to its optimal value $1/2$.
Second, we have decreased the size of the variance measure because
$\sigma^2 \leq\llVert {\sum_k \mtx{A}_k^2 } \rrVert $.
Finally, let us
remark that a similar result holds under the weaker assumption that
$\sum_k \mtx{Y}_k^2 \psdle\mtx{A}^2$ almost surely.

Corollary~\ref{cor:hoeffding} admits a plethora %
of applications.
For example, in theoretical computer science, Widgerson and Xiao employ
a suboptimal matrix Hoeffding inequality \cite{WigdersonXi08},
Theorem~2.6, to derive efficient, derandomized
algorithms for homomorphism testing and semidefinite covering problems.
Under the improvements of Corollary~\ref{cor:hoeffding}, their results
improve accordingly.
%

%

%

%

%

%

%

%s4.2 #&#
\subsection{\texorpdfstring{Proof of Theorem \protect\ref{thmm:concentration-bdd}: Exponential concentration.}
{Proof of Theorem 4.1: Exponential concentration}} \label{sec:proof-conc-bdd}

Suppose that\break  $(\mtx{X}, \mtx{X}')$ is a matrix Stein pair %
constructed from an auxiliary exchangeable pair $(Z, Z')$.
Our aim is to bound the normalized trace m.g.f.
%
%e4.2 #&#
\begin{equation}
\label{eqn:tmgf-pf} m(\theta) \mathrell=
\Expect\ntr\econst^{\theta\mtx{X}}\qquad
\mbox{for $\theta\in\R$.}
\end{equation}
The basic strategy is to develop a differential inequality, which we
integrate to control $m(\theta)$ itself. Once these estimates
are in place, the matrix Laplace transform method,
Proposition~\ref{prop:matrix-laplace}, furnishes probability
inequalities for the extreme eigenvalues of $\mtx{X}$.

The following result summarizes our bounds for the trace m.g.f. $m(\theta)$.

%le4.3 #&#
\begin{lemma} [(Trace m.g.f. estimates for bounded random matrices)] \label
{lem:mgf-bounds}
Let $(\mtx{X}, \mtx{X}')$ be a matrix Stein pair, %
and suppose there exist nonnegative constants $c, v$ for which
%
%e4.3 #&#
\begin{equation}
\label{eqn:comparison2} \mtxx{\Delta}_{\mtx{X}} \psdle c \mtx{X} + v \Id \qquad\mbox{almost
surely}.
\end{equation}
Then the normalized trace m.g.f. $m(\theta) \mathrell=\Expect
\ntr\econst
^{\theta\mtx{X}}$ satisfies the bounds
%
%e4.4 #&#
%e4.5 #&#
%e4.6 #&#
\begin{eqnarray}
\log m(\theta) &\leq&\frac{v \theta^2}{2} \qquad\mbox{when $\theta\leq0$},
\label{eqn:mgf-negative}
\\
\log m(\theta) &\leq&\frac{v}{c^2} \biggl[ \log \biggl( \frac{1}{1-c\theta}
\biggr) - c\theta \biggr] \label{eqn:mgf-positive-1}
\\
&\leq&\frac{v \theta^2}{ 2(1 - c \theta)} \qquad\mbox{when $0 \leq\theta< 1/c$.} \label{eqn:mgf-positive-2}
\end{eqnarray}
\end{lemma}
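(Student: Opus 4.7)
The plan is to turn the lemma into a first-order differential inequality for $m(\theta)$ using Lemma~\ref{lem:mgf-derivative}, and then integrate. The initial condition is $m(0) = \ntr\Id = 1$, so $\log m(0) = 0$, and $m(\theta) > 0$ throughout because $\ntr\econst^{\theta\mtx{X}} > 0$. To process the right-hand side of Lemma~\ref{lem:mgf-derivative}, I note that $\econst^{\theta\mtx{X}} \psdge \mtx{0}$ and use psd trace monotonicity (if $\mtx{A}\psdle \mtx{B}$ and $\mtx{P}\psdge \mtx{0}$ then $\ntr[\mtx{A}\mtx{P}] \leq \ntr[\mtx{B}\mtx{P}]$) together with the hypothesis~\eqref{eqn:comparison2} to get
$$
\Expect\ntr\big[\mtx{\Delta}_{\mtx{X}}\, \econst^{\theta\mtx{X}}\big]
\;\leq\; c\,\Expect\ntr\big[\mtx{X}\,\econst^{\theta\mtx{X}}\big] + v\,\Expect\ntr\big[\econst^{\theta\mtx{X}}\big]
\;=\; c\, m'(\theta) + v\, m(\theta),
$$
where the final identification of $m'(\theta)$ is the formula already used inside the proof of Lemma~\ref{lem:mgf-derivative}.

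Substituting this estimate back into Lemma~\ref{lem:mgf-derivative} yields $m'(\theta) \leq \theta[c m'(\theta) + v m(\theta)]$ for $\theta \geq 0$; for $\theta \leq 0$ the multiplication by $\theta$ reverses the estimate and gives $m'(\theta) \geq \theta[c m'(\theta) + v m(\theta)]$. After rearranging and dividing by $(1-c\theta)m(\theta) > 0$ (valid on $\theta < 1/c$), I reach the matched pair of differential inequalities
$$
\frac{d}{d\theta}\log m(\theta) \leq \frac{v\theta}{1 - c\theta}\ \ (0 \leq \theta < 1/c), \qquad \frac{d}{d\theta}\log m(\theta) \geq \frac{v\theta}{1 - c\theta}\ \ (\theta \leq 0).
$$

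Integrating from $0$ to $\theta$ and using $\log m(0) = 0$ gives the estimates. For $0 \leq \theta < 1/c$, the substitution $u = 1 - ct$ evaluates $\int_0^\theta \tfrac{vt}{1-ct}\, dt = \tfrac{v}{c^2}\bigl[\log \tfrac{1}{1-c\theta} - c\theta\bigr]$, which is~\eqref{eqn:mgf-positive-1}. Bound~\eqref{eqn:mgf-positive-2} then follows from the scalar inequality
$$
\log \tfrac{1}{1-x} - x \;=\; \sum\nolimits_{k \geq 2} x^k/k \;\leq\; \tfrac{1}{2}\sum\nolimits_{k \geq 2} x^k \;=\; \tfrac{x^2}{2(1-x)} \qquad (0 \leq x < 1).
$$
For $\theta \leq 0$, rather than re-evaluating the full integral I use the cruder pointwise estimate $\tfrac{vt}{1-ct} \geq vt$ on $[\theta, 0]$ (since $vt \leq 0$ and $1-ct \geq 1$); integrating the reversed derivative inequality from $\theta$ up to $0$ and rearranging yields $\log m(\theta) \leq v\theta^2/2$, which is~\eqref{eqn:mgf-negative}.

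The chief bookkeeping obstacle will be keeping the inequality directions straight through the sign change at $\theta = 0$: the two branches of Lemma~\ref{lem:mgf-derivative}, the sign flip from multiplying by $\theta \leq 0$, and the reversed limits of integration on $[\theta, 0]$ must all combine to produce upper bounds on $\log m(\theta)$ on both sides of the origin. Once this is tracked carefully, the remaining work is elementary single-variable calculus.
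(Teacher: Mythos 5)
Your argument follows the same route as the paper: combine Lemma~\ref{lem:mgf-derivative} with the hypothesis~\eqref{eqn:comparison2} to derive the differential inequalities
$\ddt{\theta}\log m(\theta) \leq v\theta/(1-c\theta)$ on $0 \leq \theta < 1/c$ and the reversed bound on $\theta \leq 0$, then integrate from $0$ using $\log m(0) = 0$. Your sign bookkeeping through the $\theta \leq 0$ branch is correct, and your derivation of~\eqref{eqn:mgf-positive-2} from~\eqref{eqn:mgf-positive-1} via the termwise power-series comparison $\sum_{k\geq 2} x^k/k \leq \frac{1}{2}\sum_{k\geq 2} x^k$ is a perfectly good alternative to the paper's integral estimate $\int_0^\theta \frac{vs}{1-cs}\idiff{s} \leq \int_0^\theta \frac{vs}{1-c\theta}\idiff{s}$.

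There is, however, one genuine gap. Lemma~\ref{lem:mgf-derivative} carries the standing hypothesis that $\mtx{X}$ is almost surely bounded in norm, and that boundedness is also what licenses differentiating under the expectation to write $m'(\theta) = \Expect\ntr[\mtx{X}\econst^{\theta\mtx{X}}]$. Lemma~\ref{lem:mgf-bounds} does not assume boundedness; it must be \emph{derived} from~\eqref{eqn:comparison2}. The paper does this as the first step: by the conditional operator Jensen inequality~\eqref{eqn:kadison} applied to the definition~\eqref{eqn:conditional-variance},
\begin{equation*}
\mtx{\Delta}_{\mtx{X}}
  = \frac{1}{2\alpha}\Expect\big[(\mtx{X}-\mtx{X}')^2 \condl Z\big]
  \psdge \frac{1}{2\alpha}\big(\Expect[\mtx{X}-\mtx{X}'\condl Z]\big)^2
  = \frac{\alpha}{2}\mtx{X}^2,
\end{equation*}
so the hypothesis forces $\frac{\alpha}{2}\mtx{X}^2 \psdle c\mtx{X}+v\,\Id$, which confines the eigenvalues of $\mtx{X}$ to a bounded interval almost surely. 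Without this step your invocation of Lemma~\ref{lem:mgf-derivative} is unjustified, and the identification $\Expect\ntr[\mtx{X}\econst^{\theta\mtx{X}}]=m'(\theta)$ inside your psd-monotonicity estimate is likewise unverified. Adding this boundedness argument at the outset makes the proof complete and essentially coincident with the paper's.
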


We establish Lemma~\ref{lem:mgf-bounds} in Section~\ref{sec:L2-concentration} et seq. In Section~\ref{sec:stein-laplace-arg},
we finish the proof of Theorem~\ref{thmm:concentration-bdd} by
combining these bounds
with the matrix Laplace transform method.

%s4.2.1 #&#
\subsubsection{\texorpdfstring{Boundedness of the random matrix.}
{Boundedness of the random matrix}} \label{sec:L2-concentration}

First, we confirm that the random matrix $\mtx{X}$ is almost surely
bounded under hypothesis \eqref{eqn:comparison2} on the conditional variance
$\mtxx{\Delta}_{\mtx{X}}$. Recall definition \eqref
{eqn:conditional-variance} of
the conditional variance, and compute that
\[
\mtxx{\Delta}_{\mtx{X}} = \frac{1}{2\alpha} \Expect\bigl[ \bigl(\mtx{X} -
\mtx{X}'\bigr)^2 \vertt Z \bigr] \psdge\frac{1}{2\alpha}
\bigl(\Expect\bigl[ \mtx{X} - \mtx{X}' \vertt Z \bigr]
\bigr)^2 = \frac{1}{2\alpha} ( \alpha\mtx{X} )^2 =
\frac{\alpha}{2} \mtx{X}^2.
\]
The semidefinite bound is the operator Jensen inequality \eqref
{eqn:kadison}, applied conditionally.
The third relation follows from definition \eqref{eqn:F-and-G}
of a matrix Stein pair. Owing to assumption \eqref{eqn:comparison2},
we reach
the quadratic inequality
$\frac{1}{2} \alpha\mtx{X}^2 \psdle c \mtx{X} + v \Id$.
The scale factor $\alpha$ is positive, so we may conclude that the
eigenvalues of $\mtx{X}$
are almost surely restricted to a bounded interval.

%

%s4.2.2 #&#
\subsubsection{\texorpdfstring{Differential inequalities for the trace m.g.f.}
{Differential inequalities for the trace m.g.f.}}

Since the matrix $\mtx{X}$ is almost surely bounded, the derivative of
the trace m.g.f. has the form
%
%e4.7 #&#
\begin{equation}
\label{eqn:m-prime-pf} m'(\theta) %
= \Expect\ntr \bigl[ \mtx{X}
\econst^{\theta\mtx{X}} \bigr]\qquad \mbox{for $\theta\in\R$.}
\end{equation}
To control the derivative, we combine Lemma~\ref{lem:mgf-derivative}
with the assumed inequality~\eqref{eqn:comparison2} for the conditional
variance.
For $\theta\geq0$, we obtain
\begin{eqnarray*}
m'(\theta) &\leq&\theta\cdot\Expect\ntr \bigl[ \mtxx{
\Delta}_{\mtx{X}} \econst ^{\theta\mtx{X}} \bigr]
\\
&\leq&\theta\cdot\Expect\ntr \bigl[ (c \mtx{X} + v \Id) \econst ^{\theta\mtx{X}}
\bigr]
\\
&=& c\theta\cdot\Expect\ntr \bigl[ \mtx{X} \econst^{\theta\mtx{X}} \bigr] + v
\theta\cdot\Expect\ntr\econst^{\theta\mtx{X}}
\\
&=& c\theta\cdot m'(\theta) + v\theta\cdot m(\theta).
\end{eqnarray*}
In the last line, we have identified the trace m.g.f. \eqref{eqn:tmgf-pf}
and its derivative \eqref{eqn:m-prime-pf}.
The second relation holds because the matrix $\econst^{\theta\mtx{X}}$
is positive definite. Indeed, when $\mtx{P}$ is psd, $\mtx{A} \psdle
\mtx{B}$ implies that $\trace( \mtx{AP} ) \leq\trace( \mtx{BP} )$.

For $\theta\leq0$, the same argument yields a lower bound
\[
m'(\theta) \geq c\theta\cdot m'(\theta) + v\theta\cdot
m(\theta).
\]
Rearrange these inequalities to isolate the log-derivative $m'(\theta
)/m(\theta)$ of the trace m.g.f. We reach
%
%e4.8 #&#
%e4.9 #&#
\begin{eqnarray}
\frac{\mathrm{d}}{\mathrm{d}{\theta}} \log m(\theta) & \leq&\frac
{v \theta}{1 - c \theta}\qquad \mbox{for $0 \leq
\theta< 1/c$\quad and} \label{eqn:d-log-m-pos}
\\
\frac{\mathrm{d}}{\mathrm{d}{\theta}} \log m(\theta) & \geq&\frac
{v \theta}{1 - c \theta} \qquad\mbox{for $\theta
\leq0$} \label{eqn:d-log-m-neg}.
\end{eqnarray}

%s4.2.3 #&#
\subsubsection{\texorpdfstring{Solving the differential inequalities.}
{Solving the differential inequalities}}

Observe that
%
%e4.10 #&#
\begin{equation}
\label{eqn:m0} \log m(0) = \log\ntr\econst^{\mtx{0}} = \log\ntr\Id= \log1 =
0.
\end{equation}
Therefore, we may integrate the differential inequalities \eqref
{eqn:d-log-m-pos} and \eqref{eqn:d-log-m-neg}, starting at zero, to
obtain bounds on $\log m(\theta)$ elsewhere.

First, assume that $0 \leq\theta< 1/c$. In view of \eqref{eqn:m0},
the fundamental theorem of calculus and the differential
inequality \eqref{eqn:d-log-m-pos} imply that
\[
\log m(\theta) %
= \int_0^\theta
\frac{\mathrm{d}}{\mathrm{d}{s}} \log m(s) \,\mathrm{d} {s} \leq\int_0^\theta
\frac{v s}{1-cs}\,\mathrm{d} {s} = -\frac{v}{c^2} \bigl(c\theta+ \log(1-c
\theta) \bigr).
\]
We can develop a weaker inequality by making a further approximation
within the integral,
\[
\log m(\theta) %
\leq\int_0^\theta
\frac{vs}{1-cs} \,\mathrm{d} {s} \leq\int_0^\theta
\frac{vs}{1-c\theta} \,\mathrm{d} {s} = \frac{v\theta^2}{2(1-c\theta)}.
\]
These inequalities are the trace m.g.f. estimates \eqref{eqn:mgf-positive-1}
and \eqref{eqn:mgf-positive-2} appearing in Lemma~\ref{lem:mgf-bounds}.

Next, assume that $\theta\leq0$. In this case, the differential
inequality \eqref{eqn:d-log-m-neg} yields
\[
- \log m(\theta) %
= \int_\theta^0
\frac{\mathrm{d}}{\mathrm{d}{s}} \log m(s) \,\mathrm{d} {s} \geq\int_\theta^0
\frac{vs}{1-cs} \,\mathrm{d} {s} \geq\int_\theta^0
vs \,\mathrm{d} {s} = -\frac{v\theta^2}{2}.
\]
This calculation delivers the trace m.g.f. bound \eqref{eqn:mgf-negative}.
The proof of Lemma~\ref{lem:mgf-bounds} is complete.

%

%s4.2.4 #&#
\subsubsection{\texorpdfstring{The matrix Laplace transform argument.}
{The matrix Laplace transform argument}}
\label{sec:stein-laplace-arg}

With Lemma~\ref{lem:mgf-bounds} at hand, we quickly finish the proof of
Theorem~\ref{thmm:concentration-bdd}.
First, let us establish probability inequalities for the maximum
eigenvalue. The Laplace transform bound \eqref{eqn:laplace-upper-tail}
and the trace m.g.f. estimate \eqref{eqn:mgf-positive-1} together yield
\begin{eqnarray*}
\mathbb{P} \bigl\{ {\lambda_{\max}(\mtx{X}) \geq t } \bigr\}
&\leq&\inf_{0 < \theta< 1/c} d \cdot\exp \biggl\{ - \theta t -
\frac
{v}{c^2} \bigl( c\theta+ \log(1-c\theta) \bigr) \biggr\}
\\
&\leq& d \cdot\exp \biggl\{ - \frac{t}{c} + \frac{v}{c^2} \log
\biggl( 1 + \frac{ct}{v} \biggr) \biggr\}.
\end{eqnarray*}
The second relation follows when we choose $\theta= t/(v + ct)$.
Similarly, the trace m.g.f. bound \eqref{eqn:mgf-positive-2} delivers
\begin{eqnarray*}
\mathbb{P} \bigl\{ {\lambda_{\max}(\mtx{X}) \geq t } \bigr\} &\leq&\inf
_{0 < \theta< 1/c} d \cdot\exp \biggl\{ - \theta t + \frac{v
\theta^2}{2(1-c\theta)}
\biggr\}
\\
&=& d \cdot\exp \biggl\{ - \frac{v}{2c^2}(1-\sqrt{1+2ct/v})^2
\biggr\}
\\
&\leq& d \cdot\exp \biggl\{ - \frac{t^2}{2v + 2ct} \biggr\},
\end{eqnarray*}
because the infimum occurs at $\theta= (1-1/\sqrt{1+2ct/v})/c$.
The final inequality depends on the numerical fact
\[
(1-\sqrt{1+2x})^2 \geq\frac{x^2}{1+x} \qquad\mbox{for all $x \geq0$}.
\]
To control the expectation of the maximum eigenvalue, we combine the
Laplace transform bound \eqref{eqn:laplace-upper-mean} and the trace
m.g.f. bound \eqref{eqn:mgf-positive-2} to see that
\[
\Expect\lambda_{\max}(\mtx{X}) \leq\inf_{0 < \theta< 1/c}  \frac{1}{\theta} \biggl[ \log d + \frac
{v \theta^2}{2(1-c\theta)} \biggr] = \sqrt{2 v \log d}
+ c \log d.
\]
The second relation can be verified using a computer algebra system.

Next, we turn to results for the minimum eigenvalue. Combine the matrix
Laplace transform bound \eqref{eqn:laplace-lower-tail} with the trace
m.g.f. bound \eqref{eqn:mgf-negative} to reach
\[
\mathbb{P} \bigl\{ {\lambda_{\min}(\mtx{X}) \leq- t} \bigr\} \leq d
\cdot\inf_{\theta< 0}  \exp \biggl\{ \theta t + \frac{v
\theta
^2}{2}
\biggr\} = d \cdot\econst^{-t^2/2v}.
\]
The infimum is attained at $\theta= - t/v$. To compute the expectation
of the minimum eigenvalue, we apply the Laplace transform bound \eqref
{eqn:laplace-lower-mean} and the trace m.g.f. bound~\eqref
{eqn:mgf-negative}, whence
\[
\Expect\lambda_{\min}(\mtx{X}) \geq\sup_{\theta< 0}  \frac{1}{\theta} \biggl[ \log d + \frac{v
\theta^2}{2} \biggr] = - \sqrt{2 v\log d
}.
\]
The supremum is attained at $\theta= - \sqrt{2v^{-1}\log d}$.

%
%s5 #&#
\section{\texorpdfstring{Refined exponential concentration for random matrices.}
{Refined exponential concentration for random matrices}} \label
{sec:concentration-subgauss}

Although Theorem~\ref{thmm:concentration-bdd} is a strong result, the
hypothesis
$\mtxx{\Delta}_{\mtx{X}} \psdle c \mtx{X} + v \Id$ on the
conditional variance
is too stringent for many situations of interest.
Our second major result shows that we can use the typical
behavior of the conditional variance to obtain tail bounds for the
maximum eigenvalue of
a random Hermitian matrix.

%

%

%

%

%

%
%th5.1 #&#
\begin{thmm}[(Refined concentration for random matrices)] \label
{thmm:concentration-subgauss}
Suppose that $(\mtx{X}, \mtx{X}') \in\mathbb{H}^{d} \times\mathbb
{H}^{d}$ is a
matrix Stein pair,
and assume that $\mtx{X}$ is almost surely bounded in norm. Define the function
%
%e5.1 #&#
\begin{equation}
\label{eqn:r-psi} r(\psi): = \frac{1}{\psi} \log\Expect\ntr
\econst^{\psi\mtxx
{\Delta
}_{\mtx{X}}}\qquad \mbox{for each $\psi> 0$},
\end{equation}
where
$\mtxx{\Delta}_{\mtx{X}}$ is the conditional variance \eqref
{eqn:conditional-variance}.
Then, for all $t \geq0$ and all $\psi> 0$,
%
%e5.2 #&#
\begin{equation}
\label{eqn:refined-tail} \mathbb{P} \bigl\{ {\lambda_{\max}(\mtx{X}) \geq t} \bigr
\} \leq d \cdot\exp \biggl\{ \frac{-t^2}{2r(\psi) + 2 t /\sqrt{\psi}} \biggr\}.
\end{equation}
Furthermore, for all $\psi> 0$,
%
%e5.3 #&#
\begin{equation}
\label{eqn:refined-mean} \Expect\lambda_{\max}(\mtx{X}) \leq\sqrt{2 r(\psi) \log
d} + \frac{\log d}{\sqrt{\psi}}.
\end{equation}
\end{thmm}

This theorem is essentially a matrix version of a result from
Chatterjee's thesis~\cite{Cha08Concentration-Inequalities}, Theorem~3.13.
The proof of Theorem~\ref{thmm:concentration-subgauss} is similar in
spirit to
the proof of Theorem~\ref{thmm:concentration-bdd}, so we postpone
the demonstration until Appendix \ref{sec:subgauss-pf}.

Let us offer some remarks to clarify the meaning of this result.
Recall that $\mtxx{\Delta}_{\mtx{X}}$ is a stochastic approximation
for the variance of the random matrix $\mtx{X}$.
We can interpret the function $r(\psi)$ as a measure of the
typical magnitude of the conditional variance. Indeed, the matrix Laplace
transform result, Proposition~\ref{prop:matrix-laplace},
ensures that
\[
\Expect\lambda_{\max}( \mtxx{\Delta}_{\mtx{X}} ) \leq\inf
_{\psi> 0} \biggl[ r(\psi) + \frac{\log d}{\psi} \biggr].
\]
The import of this inequality is that we can often identify a
value of $\psi$ to make
$r(\psi) \approx\Expect\lambda_{\max}(\mtxx{\Delta}_{\mtx{X}})$.
Ideally, we also want to choose $r(\psi) \gg\psi^{-1/2}$ so that the term
$r(\psi)$ drives the tail bound \eqref{eqn:refined-tail}
when the parameter $t$ is small. In the next subsection, we show
that these heuristics yield a matrix Bernstein inequality.

%

%

%

%

%s5.1 #&#
\subsection{\texorpdfstring{Application: The matrix Bernstein inequality.}
{Application: The matrix Bernstein inequality}}

As an illustration of Theorem~\ref{thmm:concentration-subgauss}, we
establish a tail bound for a
sum of centered, independent random matrices that are subject to a
uniform norm bound.

%co5.2 #&#
\begin{cor}[(Matrix Bernstein)] \label{cor:bernstein}
Consider an independent sequence $(\mtx{Y}_k)_{k \geq1}$ of random
matrices in $\mathbb{H}^{d}$ that satisfy
\[
\Expect\mtx{Y}_k = \mtx{0}\quad \mbox{and}\quad \llVert {
\mtx{Y}_k } \rrVert \leq R\qquad \mbox{for each index $k$.}
\]
Then, for all $t \geq0$,
\[
\mathbb{P} \biggl\{ {\lambda_{\max} \biggl(\sum
_k \mtx{Y}_k \biggr) \geq t } \biggr\} \leq d
\cdot\exp \biggl\{ \frac{-t^2}{3 \sigma^2 + 2 R t} \biggr\} \qquad\mbox{for } \sigma^2
\mathrell=%
\biggl\llVert {\sum_k
\Expect\mtx{Y}_k^2 } \biggr\rrVert.
\]
Furthermore,
\[
\Expect\lambda_{\max} \biggl(\sum_k
\mtx{Y}_k \biggr) \leq\sigma\sqrt{3 \log d} + R \log d.
\]
\end{cor}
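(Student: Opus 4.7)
The plan is to follow the template of the matrix Hoeffding proof (Corollary~\ref{cor:hoeffding}). Reuse the independent-sum Stein pair $(\mtx{X},\mtx{X}')$ constructed in Section~\ref{sec:indep-sum}, with scale factor $\alpha = 1/n$ and, by~\eqref{eqn:indep-sum-DeltaX}, conditional variance
$$\mtx{\Delta}_{\mtx{X}} = \frac{1}{2}\sum\nolimits_k\bigl(\mtx{Y}_k^2 + \Expect\mtx{Y}_k^2\bigr).$$
Unlike the Hoeffding setting, only a uniform operator bound $\mtx{Y}_k^2 \psdle R^2\Id$ is available, so summing gives at best $\mtx{\Delta}_{\mtx{X}} \psdle \tfrac{1}{2}(\sigma^2 + nR^2)\Id$, which is far too crude to produce a Bernstein-type tail through Theorem~\ref{thm:concentration-bdd}. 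I would therefore invoke the refined Theorem~\ref{thm:concentration-subgauss}, which only involves the \emph{average} behavior of $\mtx{\Delta}_{\mtx{X}}$ through $r(\psi)$.

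Matching the desired tail $\exp(-t^2/(3\sigma^2+2Rt))$ against the general bound $\exp(-t^2/(2r(\psi)+2t/\sqrt{\psi}))$ points to the choice $\psi = 1/R^2$, for which it suffices to verify $r(1/R^2) \le \tfrac{3}{2}\sigma^2$. Writing $\mtx{V} \defby \sum_k \Expect\mtx{Y}_k^2$ so that $\|\mtx{V}\| \le \sigma^2$ and $\mtx{\Delta}_{\mtx{X}} = \tfrac{1}{2}\sum_k \mtx{Y}_k^2 + \tfrac{1}{2}\mtx{V}$, I would first peel off the deterministic half via Golden--Thompson, paying a scalar factor of $\econst^{\psi\sigma^2/2}$, and then bound the remaining random trace mgf $\Expect\trace\exp\bigl(\tfrac{\psi}{2}\sum_k \mtx{Y}_k^2\bigr)$ using the Lieb/Tropp subadditivity bound for independent Hermitian sums, applied to the PSD summands $\tfrac{\psi}{2}\mtx{Y}_k^2$. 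Combined with the scalar convex inequality $\econst^{sx} \le 1 + (\econst^{sR^2}-1)x/R^2$ on $[0,R^2]$ (lifted by functional calculus since $\mtx{Y}_k^2 \psdle R^2\Id$) and the operator inequality $\log(\Id+\mtx{B}) \psdle \mtx{B}$ for PSD $\mtx{B}$, this produces
$$r(\psi)\ \le\ \frac{\sigma^2}{2}\ +\ \frac{\econst^{\psi R^2/2}-1}{\psi R^2}\,\sigma^2,$$
and at $\psi = 1/R^2$ the right-hand side evaluates to $\bigl(\tfrac{1}{2}+\econst^{1/2}-1\bigr)\sigma^2 < \tfrac{3}{2}\sigma^2$, as required.

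The estimate then drops straight into Theorem~\ref{thm:concentration-subgauss} to yield both the stated tail bound and the expectation bound $\sigma\sqrt{3\log d} + R\log d$. The main obstacle is controlling $r(\psi)$: because $\mtx{\Delta}_{\mtx{X}}$ contains a random sum of independent PSD matrices whose operator norm can scale like $nR^2$ in the worst case but has expectation of norm only $\sigma^2$, capturing the typical behavior requires some matrix-analytic input beyond the exchangeable-pair framework itself, most naturally Lieb's concavity theorem together with Golden--Thompson. Everything else in the argument is a mechanical application of the machinery already developed in the paper.
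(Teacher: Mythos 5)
Your proof is correct and yields the stated bound, but it departs from the paper's argument at the pivotal step of controlling $r(\psi)$. You split off the deterministic half of $\mtx{\Delta}_\mtx{X}$ using Golden--Thompson (for this piece, plain monotonicity of the trace exponential already suffices, since $\tfrac{\psi}{2}\mtx{V}\psdle\tfrac{\psi\sigma^2}{2}\Id$ and the identity commutes with everything), then bound $\Expect\ntr\exp\{\tfrac{\psi}{2}\sum_k\mtx{Y}_k^2\}$ via Lieb's concavity theorem---the subadditivity of matrix cumulants of~\cite{Tro11:User-Friendly-FOCM}---together with a chord estimate on $\econst^{sx}$ over $[0,R^2]$ and the operator inequality $\log(\Id+\mtx{B})\psdle\mtx{B}$. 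That is, in effect, Tropp's Chernoff machinery applied to the squares $\mtx{Y}_k^2$. The paper, by contrast, remains entirely inside the exchangeable-pair framework: it centers the conditional variance to obtain $\mtx{W}=\mtx{\Delta}_\mtx{X}-\Expect\mtx{\Delta}_\mtx{X}=\tfrac12\sum_k(\mtx{Y}_k^2-\Expect\mtx{Y}_k^2)$, observes that $\mtx{W}$ is itself a sum of centered, independent Hermitian matrices, runs the Stein-pair construction of Section~\ref{sec:indep-sum} a second time for $\mtx{W}$, bounds $\mtx{\Delta}_\mtx{W}\psdle\tfrac{R^2}{2}\mtx{W}+\tfrac{R^2\sigma^2}{2}\Id$ using only operator convexity~\eqref{eqn:square-convex}, operator Jensen~\eqref{eqn:kadison}, and $\mtx{Y}_k^4\psdle R^2\mtx{Y}_k^2$, and then applies the differential-inequality trace mgf bound, Lemma~\ref{lem:mgf-bounds}, to $\mtx{W}$. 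This bootstrap delivers $r(R^{-2})\le\tfrac32\sigma^2$ without Golden--Thompson and without Lieb. So your closing remark---that controlling $\mathrm{tr}\,\exp$ of the typical conditional variance requires matrix-analytic input beyond the exchangeable-pair framework, ``most naturally Lieb's concavity theorem together with Golden--Thompson''---is the one thing to correct: dispensing with exactly those two deep inequalities is the paper's advertised purpose, and the self-reproducing structure (the conditional variance of an independent sum is again an independent sum) is what makes it possible. Your route does produce a marginally sharper constant, $r(R^{-2})\le(\econst^{1/2}-\tfrac12)\sigma^2\approx1.15\sigma^2$, but both routes satisfy $r(R^{-2})\le\tfrac32\sigma^2$ and hence deliver the identical corollary.
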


Corollary~\ref{cor:bernstein} is directly
comparable with other matrix Bernstein inequalities in the literature.
The constants are slightly
worse than \cite{Tro11User-Friendly-FOCM}, Theorem~1.4 and slightly
better than \cite{Oli10Concentration-Adjacency}, Theorem~1.2. The
hypotheses in the current result are somewhat stricter than those in
the prior works.
Nevertheless, the proof provides a template for studying more complicated
random matrices that involve dependent random variables.

\begin{pf*}{Proof of Corollary \ref{cor:bernstein}}
Consider the matrix Stein pair $(\mtx{X}, \mtx{X}')$ described in
Section~\ref{sec:indep-sum}.
Calculation \eqref{eqn:indep-sum-DeltaX} shows that the
conditional\vadjust{\goodbreak}
variance of $\mtx{X}$ satisfies
\[
\mtxx{\Delta}_{\mtx{X}} = \frac{1}{2} \sum
_k \bigl(\mtx {Y}_k^2 + \Expect
\mtx{Y}_k^2 \bigr).
\]
The function $r(\psi)$ measures the typical size of $\mtxx{\Delta
}_{\mtx
{X}}$. To control $r(\psi)$,
we center the conditional variance and reduce the expression as follows:
%
%e5.4 #&#
\begin{eqnarray}
\label{eqn:L-bd} r(\psi) \mathrell\hspace*{-1pt}&=&\frac{1}{\psi} \log\Expect\ntr
\econst^{\psi\mtxx
{\Delta}_{\mtx{X}}} \leq\frac{1}{\psi} \log\Expect\ntr\exp \bigl\{ \psi(
\mtx {\Delta }_{\mtx{X}} - \Expect\mtxx{\Delta}_{\mtx{X}}) + \psi\llVert
{\Expect\mtxx{\Delta}_{\mtx{X}} } \rrVert \cdot\Id \bigr\}
\nonumber
\\
&= &\frac{1}{\psi} \log\Expect\ntr \bigl[ \econst^{\psi\sigma^2} \cdot \exp
\bigl\{ \psi(\mtxx{\Delta}_{\mtx{X}} - \Expect\mtxx{\Delta }_{\mtx
{X}})
\bigr\} \bigr]
\\
&=& \sigma^2 + \frac{1}{\psi} \log\Expect\ntr
\econst^{ \psi(\mtxx{\Delta
}_{\mtx
{X}} - \Expect\mtxx{\Delta}_{\mtx{X}}) }.\nonumber
\end{eqnarray}
The inequality depends on the monotonicity of the trace
exponential \cite{Pet94Survey-Certain}, Section~2.
Afterward, we have applied the identity
$\llVert {\Expect\mtxx{\Delta}_{\mtx{X}} } \rrVert  = \llVert {\Expect\mtx {X}^2 } \rrVert  =
\sigma^2$,
which follows from \eqref{eqn:mean-delta} and the independence of the
sequence $(\mtx{Y}_k)_{k\geq1}$.

Introduce the centered random matrix
%
%e5.5 #&#
\begin{equation}
\label{eqn:W-def-bernstein} \mtx{W} \mathrell=\mtxx{\Delta}_{\mtx{X}} - \Expect
\mtx {\Delta }_{\mtx{X}} = \frac{1}{2} \sum
_k \bigl(\mtx{Y}_k^2 - \Expect
\mtx{Y}_k^2 \bigr).
\end{equation}
Observe that $\mtx{W}$ consists of a sum of centered, independent
random matrices,
so we can study it using the matrix Stein pair discussed in
Section~\ref{sec:indep-sum}.
Adapt the conditional variance calculation \eqref{eqn:indep-sum-DeltaX}
to obtain
\begin{eqnarray*}
\mtxx{\Delta}_{\mtx{W}} &=& \frac{1}{2} \cdot\frac{1}{4} \sum
_k \bigl[ \bigl(\mtx{Y}_k^2
- \Expect\mtx{Y}_k^2 \bigr)^2 + \Expect
\bigl(\mtx{Y}_k^2 - \Expect\mtx{Y}_k^2
\bigr)^2 \bigr]
\\
&\psdle&\frac{1}{8} \sum_k \bigl[ 2
\mtx{Y}_k^4 + 2 \bigl(\Expect\mtx{Y}_k^2
\bigr)^2 + \Expect \mtx{Y}_k^4 - \bigl(
\Expect\mtx{Y}_k^2 \bigr)^2 \bigr]
\\
&\psdle&\frac{1}{4} \sum_k \bigl(
\mtx{Y}_k^4 + \Expect \mtx {Y}_k^4
\bigr).
\end{eqnarray*}
To reach the second line, we apply the operator convexity \eqref
{eqn:square-convex} of the matrix square to the first parenthesis, and
we compute the second expectation explicitly. The third line follows
from the operator Jensen inequality \eqref{eqn:kadison}. To continue,
make the estimate $\mtx{Y}_k^4 \psdle R^2 \mtx{Y}_k^2$ in both
terms. Thus,
\[
\label{eqn:DeltaW-bd} \mtxx{\Delta}_{\mtx{W}} \psdle\frac{R^2}{4} \sum
_{k=1}^n \bigl(\mtx{Y}_k^2
+ \Expect \mtx{Y}_k^2 \bigr) \psdle\frac{R^2}{2}
\cdot\mtx{W} + \frac{R^2 \sigma^2}{2} \cdot \Id.
\]
The trace m.g.f. bound, Lemma~\ref{lem:mgf-bounds}, delivers
%
%e5.6 #&#
\begin{equation}
\label{eqn:center-DeltaX-bd} \log m_{\mtx{W}}(\psi) = \log\Expect\ntr
\econst^{\psi\mtx{W}} \leq\frac{R^2 \sigma^2 \psi^2}{4 - 2 R^2 \psi}.
\end{equation}

To complete the proof, combine the bounds \eqref{eqn:L-bd} and \eqref
{eqn:center-DeltaX-bd} to reach
\[
r(\psi) %
\leq\sigma^2 + \frac{R^2 \sigma^2 \psi}{4 - 2 R^2 \psi}.
\]
In particular, it holds that $r(R^{-2}) \leq1.5 \sigma^2$. The
result now
follows from Theorem~\ref{thmm:concentration-subgauss}.
\end{pf*}

%

%s6 #&#
\section{\texorpdfstring{Polynomial moments and the spectral
norm of a random matrix.}
{Polynomial moments and the spectral norm of a random matrix}}
\label{sec:polynomial}

We can also study the spectral norm of a random matrix by bounding its
polynomial moments.
To present
these results, we must introduce
the family of Schatten norms.

%de6.1 #&#
\begin{defn}[(Schatten norm)]
For each $p \geq1$, the Schatten $p$-norm is defined as
\[
\llVert {\mtx{B}} \rrVert _{p} \mathrell= \bigl( \trace
\llvert {\mtx{B}} \rrvert ^p \bigr)^{1/p} \qquad\mbox{for $\mtx{B}
\in\M^d$.}
\]
In this setting, $\llvert {\mtx{B}} \rrvert  \mathrell=(\mtx{B}^*\mtx{B})^{1/2}$.
Bhatia's book \cite{Bha97Matrix-Analysis}, Chapter IV, contains a
detailed discussion
of these norms and their properties.
\end{defn}

The following proposition is a matrix analog of the Chebyshev bound
from classical probability.
As in the scalar case \cite{Lug09Concentration-Measure}, Exercise 1,
this bound is at least as tight as the analogous matrix Laplace
transform bound \eqref{eqn:laplace-upper-tail}.

%

%

%pr6.2 #&#
\begin{prop}[(Matrix Chebyshev method)] \label{prop:chebyshev}
Let $\mtx{X}$ be a random matrix.
For all $t > 0$,
%
%e6.1 #&#
\begin{equation}
\label{eqn:chebyshev-tail} \mathbb{P} \bigl\{ {\llVert {\mtx{X}} \rrVert \geq t } \bigr\}
\leq\inf_{p \geq1}  t^{-p} \cdot\Expect\llVert {\mtx{X} }
\rrVert _{p}^p. %
\end{equation}
Furthermore,
%
%e6.2 #&#
\begin{equation}
\label{eqn:chebyshev-mean} \Expect\llVert {\mtx{X}} \rrVert \leq\inf_{p \geq1}
 \bigl( \Expect\llVert {\mtx{X} } \rrVert _{p}^p
\bigr)^{1/p}. %
\end{equation}
\end{prop}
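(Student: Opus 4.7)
The plan is to reduce both estimates to scalar Markov and Lyapunov inequalities, via the elementary comparison between the spectral norm and the Schatten $p$-norms of a general complex matrix.

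The crucial preliminary observation is that $\norm{\mtx{X}} \leq \pnorm{p}{\mtx{X}}$ for every $p \geq 1$. Writing $\sigma_1 \geq \sigma_2 \geq \cdots \geq \sigma_d \geq 0$ for the singular values of $\mtx{X}$, we have $\norm{\mtx{X}} = \sigma_1$ and $\pnorm{p}{\mtx{X}}^p = \trace \abs{\mtx{X}}^p = \sum_j \sigma_j^p$, so the desired comparison follows by monotonicity of $x \mapsto x^{1/p}$ on $\Rplus$. I would state this as a one-line computation at the start of the proof.

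To obtain the tail bound \eqref{eqn:chebyshev-tail}, fix $p \geq 1$. Since $x \mapsto x^p$ is monotone on $\Rplus$, we may rewrite $\Prob{\norm{\mtx{X}} \geq t} = \Prob{\norm{\mtx{X}}^p \geq t^p}$. Apply the ordinary (scalar) Markov inequality to the nonnegative random variable $\norm{\mtx{X}}^p$ to bound this probability by $t^{-p} \, \Expect \norm{\mtx{X}}^p$, and then invoke the spectral–Schatten comparison to get $\Expect \norm{\mtx{X}}^p \leq \Expect \pnorm{p}{\mtx{X}}^p$. Taking the infimum over $p \geq 1$ yields the stated inequality.

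For the expectation estimate \eqref{eqn:chebyshev-mean}, I would again start from the pointwise bound $\norm{\mtx{X}} \leq \pnorm{p}{\mtx{X}}$ and take expectations to obtain $\Expect \norm{\mtx{X}} \leq \Expect \pnorm{p}{\mtx{X}}$. To pull the $p$th power outside the expectation, I would apply Lyapunov's inequality (equivalently, Jensen's inequality for the concave function $x \mapsto x^{1/p}$ on $\Rplus$) to the nonnegative random variable $\pnorm{p}{\mtx{X}}^p$, producing $\Expect \pnorm{p}{\mtx{X}} \leq (\Expect \pnorm{p}{\mtx{X}}^p)^{1/p}$. Taking the infimum over $p \geq 1$ completes the argument. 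There is no real obstacle here: the entire proposition is a direct transcription of the classical Chebyshev and Lyapunov inequalities, mediated by the comparison $\norm{\cdot} \leq \pnorm{p}{\cdot}$.
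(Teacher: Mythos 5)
Your argument is correct and is essentially the paper's own proof: both rest on scalar Markov for the tail bound and Jensen/Lyapunov for the expectation, mediated by the identity $\norm{\mtx{X}}^p = \smnorm{}{\abs{\mtx{X}}^p} \leq \trace \abs{\mtx{X}}^p$ (equivalently your $\sigma_1 \leq (\sum_j \sigma_j^p)^{1/p}$). The only cosmetic difference is that you commute the two steps in the expectation bound, applying the spectral--Schatten comparison before Jensen rather than after; the content is identical.
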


\begin{pf}
To prove \eqref{eqn:chebyshev-tail}, we use Markov's inequality. For $p
\geq1$,
\[
\mathbb{P} \bigl\{ {\llVert {\mtx{X}} \rrVert \geq t } \bigr\} %
\leq
t^{-p} \cdot\Expect\llVert {\mtx{X} } \rrVert ^p =
t^{-p} \cdot\Expect\bigl\llVert {\llvert {\mtx{X}} \rrvert
^p } \bigr\rrVert \leq t^{-p} \cdot\Expect\trace\llvert {
\mtx{X}} \rrvert ^p,
\]
since the trace of a positive matrix dominates the maximum eigenvalue.
To verify~\eqref{eqn:chebyshev-mean}, select $p \geq1$. Jensen's
inequality implies that
\[
\Expect\llVert {\mtx{X} } \rrVert \leq \bigl( \Expect\llVert {\mtx{X} } \rrVert
^p \bigr)^{1/p} = \bigl( \Expect\bigl\llVert {\llvert {
\mtx{X}} \rrvert ^p } \bigr\rrVert \bigr)^{1/p} \leq \bigl(
\Expect\trace\llvert {\mtx{X}} \rrvert ^p \bigr)^{1/p}.
\]
Identify the Schatten $p$-norm and take infima to complete the bounds.
\end{pf}

%

%

%s7 #&#
\section{\texorpdfstring{Polynomial moment inequalities for random matrices.}
{Polynomial moment inequalities for random matrices}} \label
{sec:burkholder}

Our last major result demonstrates that the polynomial moments of a
random Hermitian matrix are controlled by the moments of the\vadjust{\goodbreak}
conditional variance. By combining this result with the matrix
Chebyshev method, Proposition~\ref{prop:chebyshev}, we can obtain
probability inequalities for the spectral norm of a random Hermitian matrix.

%th7.1 #&#
\begin{thmm} [(Matrix BDG inequality)] \label{thmm:BDG-inequality}
Let $p = 1$ or $p \geq1.5$.
Suppose that $(\mtx{X}, \mtx{X}')$ is a matrix Stein pair where
$\Expect\llVert {\mtx{X}} \rrVert _{2p}^{2p} < \infty$.
Then
\[
\bigl( \Expect\llVert {\mtx{X}} \rrVert _{2p}^{2p}
\bigr)^{1/(2p)} \leq\sqrt{2p-1} \cdot \bigl( \Expect\llVert {\mtxx{\Delta
}_{\mtx{X}} } \rrVert _{p}^p \bigr)^{1/(2p)}.
\]
The conditional variance $\mtxx{\Delta}_{\mtx{X}}$ is defined
in \eqref
{eqn:conditional-variance}.
\end{thmm}
%
%re7.2 #&#
\begin{rem}[(Missing values)]
Theorem~\ref{thmm:BDG-inequality} also holds when $1 < p < 1.5$. In
this range, our bound for the constant is $\sqrt{4p-2}$. The proof
requires a variant of the mean value trace inequality for a convex
function $h$.
\end{rem}

Theorem~\ref{thmm:BDG-inequality} extends a scalar result of
Chatterjee \cite{Cha07Steins-Method}, Theorem~1.5(iii), to the matrix
setting. Chatterjee's bound can be viewed as an exchangeable pairs
version of the Burkholder--Davis--Gundy (BDG) inequality from classical
martingale theory \cite{Bur73Distribution-Function}. Other matrix
extensions of the BDG inequality appear in the work of Pisier--Xu \cite
{PX97Noncommutative-Martingale} and the work of Junge--Xu \cite
{JX03Noncommutative-Burkholder,JX08Noncommutative-Burkholder-II}.
The proof of Theorem~\ref{thmm:BDG-inequality}, which applies equally
to infinite dimensional operators $\mtx{X}$, appears below in
Section~\ref{sec:BDG-proof}.

%

%

%s7.1 #&#
\subsection{\texorpdfstring{Application: Matrix Khintchine inequality.}
{Application: Matrix Khintchine inequality}} \label{sec:khintchine}

First, we demonstrate that the matrix BDG inequality contains an
improvement of the noncommutative Khintchine inequality \cite
{L-P86Inegalites-Khintchine,LPP91Noncommutative-Khintchine} in the
matrix setting. This result has been a dominant tool in several
application areas over the last few years, largely because of the
articles~\cite{Rud99Random-Vectors,RV07Sampling-Large}.

%co7.3 #&#
\begin{cor}[(Matrix Khintchine)] \label{cor:khintchine}
Suppose that $p = 1$ or $p \geq1.5$.
Consider a finite sequence $( \mtx{Y}_k )_{k \geq1}$ of independent,
random, Hermitian matrices
and a deterministic sequence $( \mtx{A}_k )_{k \geq1}$ for which
%
%e7.1 #&#
\begin{equation}
\label{eqn:Zk-bound} \Expect\mtx{Y}_k = \mtx{0}\quad \mbox{and}\quad
\mtx{Y}_k^2 \psdle\mtx{A}_k^2\qquad
\mbox{almost surely for each index $k$.}
\end{equation}
Then
\[
\biggl( \Expect\biggl\llVert {\sum_k
\mtx{Y}_k } \biggr\rrVert _{2p}^{2p}
\biggr)^{1/(2p)} %
\leq\sqrt{p-0.5} \cdot\biggl\llVert { \biggl(
\sum_k \bigl( \mtx {A}_k^2
+ \Expect\mtx{Y}_k^2 \bigr) \biggr)^{1/2} }
\biggr\rrVert _{2p}.
\]
In particular, when $(\eps_k)_{k \geq1}$ is an independent sequence of
Rademacher random variables,
%
%e7.2 #&#
\begin{equation}
\label{eqn:nc-khintchine} \biggl( \Expect\biggl\llVert {\sum_k
\eps_k \mtx{A}_k } \biggr\rrVert _{2p}^{2p}
\biggr)^{1/(2p)} \leq\sqrt{2p-1} \cdot\biggl\llVert { \biggl( \sum
_k \mtx{A}_k^2
\biggr)^{1/2} } \biggr\rrVert _{2p}.
\end{equation}
\end{cor}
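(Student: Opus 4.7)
The plan is to deduce Corollary~\ref{cor:khintchine} from the Matrix BDG Inequality (Theorem~\ref{thm:BDG-inequality}) applied to the matrix Stein pair for a sum of independent, centered, random matrices constructed in Section~\ref{sec:indep-sum}. Setting $\mtx{X} \defby \sum_k \mtx{Y}_k$ and letting $(\mtx{X}, \mtx{X}')$ be the resampling Stein pair, the hypothesis $\mtx{Y}_k^2 \psdle \mtx{A}_k^2$ forces $\norm{\mtx{Y}_k} \leq \norm{\mtx{A}_k}$ almost surely, so $\mtx{X}$ is bounded and $\Expect \pnorm{2p}{\mtx{X}}^{2p} < \infty$, meeting the integrability requirement of Theorem~\ref{thm:BDG-inequality}.

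The formula~\eqref{eqn:indep-sum-DeltaX} identifies the conditional variance as $\mtx{\Delta}_{\mtx{X}} = \tfrac{1}{2} \sum_k (\mtx{Y}_k^2 + \Expect \mtx{Y}_k^2)$, and the hypothesis on $\mtx{Y}_k^2$ immediately yields the \emph{deterministic} semidefinite bound $\mtx{\Delta}_{\mtx{X}} \psdle \tfrac{1}{2}\mtx{V}$, where $\mtx{V} \defby \sum_k (\mtx{A}_k^2 + \Expect \mtx{Y}_k^2)$. Since both sides are psd, Weyl monotonicity of eigenvalues together with the monotonicity of $x \mapsto x^p$ on $[0,\infty)$ gives the pointwise Schatten bound $\pnorm{p}{\mtx{\Delta}_{\mtx{X}}}^p \leq 2^{-p} \pnorm{p}{\mtx{V}}^p$. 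Taking expectations, extracting a $2p$th root, and using the identity $\pnorm{2p}{\mtx{V}^{1/2}} = \pnorm{p}{\mtx{V}}^{1/2}$ (which follows from $\trace (\mtx{V}^{1/2})^{2p} = \trace \mtx{V}^p$) produces
\begin{equation*}
\big( \Expect \pnorm{p}{\mtx{\Delta}_{\mtx{X}}}^p \big)^{1/2p} \leq 2^{-1/2}\, \pnorm{2p}{\mtx{V}^{1/2}}.
\end{equation*}
Chaining this with Theorem~\ref{thm:BDG-inequality} and collapsing the constants $\sqrt{2p-1}\cdot 2^{-1/2} = \sqrt{p-0.5}$ delivers the main bound.

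The Rademacher case~\eqref{eqn:nc-khintchine} is a direct specialization. Taking $\mtx{Y}_k = \eps_k \mtx{A}_k$ makes the hypotheses hold with $\mtx{Y}_k^2 = \Expect \mtx{Y}_k^2 = \mtx{A}_k^2$, so $\mtx{V} = 2\sum_k \mtx{A}_k^2$; pulling a factor of $\sqrt{2}$ out of $\mtx{V}^{1/2}$ and recombining with $\sqrt{p-0.5}$ recovers the constant $\sqrt{2p-1}$. The argument encounters no serious obstacle, since the heavy lifting is absorbed into Theorem~\ref{thm:BDG-inequality}; the only real things to get right are the direction of the psd comparison for $\mtx{\Delta}_{\mtx{X}}$ and the bookkeeping between Schatten-$p$ and Schatten-$2p$ norms via $\pnorm{2p}{\mtx{V}^{1/2}} = \pnorm{p}{\mtx{V}}^{1/2}$.
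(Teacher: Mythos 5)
Your argument is correct and follows the same route as the paper: build the independent-sum Stein pair from Section~\ref{sec:indep-sum}, identify $\mtx{\Delta}_{\mtx{X}} = \tfrac{1}{2}\sum_k(\mtx{Y}_k^2 + \Expect\mtx{Y}_k^2)$, dominate it by the deterministic matrix $\tfrac12\mtx{V}$, and feed this into Theorem~\ref{thm:BDG-inequality}. The paper compresses the Schatten-norm bookkeeping into a single sentence, whereas you spell out the Weyl-monotonicity step, the constant $\sqrt{2p-1}\cdot 2^{-1/2}=\sqrt{p-0.5}$, and the identity $\pnorm{2p}{\mtx{V}^{1/2}}=\pnorm{p}{\mtx{V}}^{1/2}$; these details are all correct and match the intended argument.
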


\begin{pf}
Consider the random matrix $\mtx{X} = \sum_k \mtx{Y}_k$. We
use the matrix Stein pair constructed in Section~\ref{sec:indep-sum}.
According to \eqref{eqn:indep-sum-DeltaX}, the conditional variance
$\mtxx{\Delta}_{\mtx{X}}$ satisfies
\[
\mtxx{\Delta}_{\mtx{X}} = \frac{1}{2} \sum
_k \bigl( \mtx{Y}_k^2 + \Expect
\mtx{Y}_k^2 \bigr) \psdle\frac{1}{2} \sum
_k \bigl( \mtx{A}_k^2 + \Expect
\mtx {Y}_k^2 \bigr). %
\]
An application of Theorem~\ref{thmm:BDG-inequality} completes the argument.
\end{pf}

For
each positive integer $p$, the optimal constant $\mathrm{C}_{2p}$
on the right-hand side of~\eqref{eqn:nc-khintchine} satisfies
%all positive integers $p$, the optimal constant $\cnst{C}_{2p}$
%in
%
\[
\cnst{C}_{2p}^{2p} = (2p-1)!! = {(2p)!/} {
\bigl(2^p p!\bigr)}
\]
as shown by Buchholz \cite{Buc01Operator-Khintchine}, Theorem~5.
Since
$
{(2p-1)^p/}{(2p-1)!!}
< \econst^{p-1/2}
$
for each positive integer $p$, the constant in \eqref
{eqn:nc-khintchine} lies within a factor $\sqrt{\econst}$ of optimal.
Previous methods for establishing the matrix Khintchine inequality are
rather involved, so it is
remarkable that the simple argument based on exchangeable pairs leads
to a result that is so accurate.
The same argument even yields a result under the weaker assumption
that $\sum_k \mtx{Y}_k^2 \psdle\mtx{A}^2$ almost surely.
%

%s7.2 #&#
\subsection{\texorpdfstring{Application: Matrix Rosenthal inequality.}
{Application: Matrix Rosenthal inequality}} \label{sec:rosenthal}

As a second example, we can develop a more sophisticated set of moment
inequalities that are roughly
the polynomial equivalent of the exponential moment bound underlying
the matrix Bernstein
inequality.

%

%co7.4 #&#
\begin{cor}[(Matrix Rosenthal inequality)] \label{cor:ros-pin}
Suppose that $p = 1$ or \mbox{$p \geq1.5$}. Consider a finite sequence $(\mtx
{P}_k)_{k \geq1}$ of independent, random
psd matrices that satisfy $\Expect\llVert {\mtx{P}_k} \rrVert _{2p}^{2p} <
\infty
$. Then
%
%e7.3 #&#
\begin{eqnarray}
\label{eqn:rosenthal-pos} %
&& \biggl( \Expect\biggl\llVert {\sum
_k \mtx{P}_k } \biggr\rrVert
_{2p}^{2p} \biggr)^{1/(2p)}
\nonumber
\\[-8pt]
\\[-8pt]
\nonumber
&&\qquad\leq \biggl[ \biggl\llVert {\sum_k
\Expect\mtx{P}_k } \biggr\rrVert _{2p}^{1/2} +
\sqrt{4p-2} \cdot \biggl( \sum_k \Expect\llVert {
\mtx {P}_k } \rrVert _{2p}^{2p}
\biggr)^{1/(4p)} \biggr]^2.
\end{eqnarray}
Now, consider a finite sequence $( \mtx{Y}_k )_{k \geq1}$ of centered,
independent, random Hermitian matrices,
and assume that $\Expect\llVert {\mtx{Y}_k} \rrVert
_{4p}^{4p} < \infty$. Then
%
%e7.4 #&#
\begin{eqnarray}\qquad
\label{eqn:rosenthal} %
&& \biggl( \Expect\biggl\llVert {\sum
_k \mtx{Y}_k } \biggr\rrVert
_{4p}^{4p} \biggr)^{1/(4p)}
\nonumber
\\[-8pt]
\\[-8pt]
\nonumber
&&\qquad\leq\sqrt{4p-1} \cdot\biggl\llVert { \biggl( \sum
_k \Expect\mtx {Y}_k^2
\biggr)^{1/2} } \biggr\rrVert _{4p} + (4p-1) \cdot \biggl( \sum
_k \Expect\llVert {\mtx{Y}_k }
\rrVert _{4p}^{4p} \biggr)^{1/(4p)}.
\end{eqnarray}
\end{cor}

Turn to Appendix \ref{sec:pf-rosenthal} for the proof of
Corollary~\ref
{cor:ros-pin}.
This result extends a moment inequality due to
Nagaev and Pinelis \cite{NP77Some-Inequalities},
which refines the constants in Rosenthal's inequality \cite{Ros70Subspaces-Lp},
Lemma~1.
See the historical discussion \cite{Pin94Optimum-Bounds}, Section~5,
for details.
An interesting application of Corollary~\ref{cor:ros-pin} is to
establish improved sample complexity bounds for masked sample covariance
estimation \cite{CGT11Masked-Sample} when the dimension of a
covariance matrix exceeds the number of samples.
As we were finishing this paper, we learned that Junge and Zheng have recently
established a noncommutative moment inequality \cite{JZ11Noncommutative-Bennett},
Theorem~0.4,
that is quite similar to Corollary~\ref{cor:ros-pin}.
%

%

%s7.3 #&#
\subsection{\texorpdfstring{Proof of the matrix BDG inequality.}
{Proof of the matrix BDG inequality}} \label{sec:BDG-proof}

In many respects, the proof of the matrix BDG inequality is similar to
the proof of the exponential concentration result, Theorem~\ref
{thmm:concentration-bdd}. Both are based on moment comparison arguments
that ultimately depend on the method of exchangeable pairs and the mean
value trace inequality.

Suppose that $(\mtx{X},\mtx{X}')$ is a matrix Stein pair with scale
factor $\alpha$.
First, observe that the result for $p = 1$ already follows from \eqref
{eqn:mean-delta}. Therefore, we may assume that $p \geq1.5$.
Introduce notation for the quantity of interest,
\[
E \mathrell=\Expect\llVert {\mtx{X}} \rrVert _{2p}^{2p}
= \Expect\trace\llvert {\mtx {X}} \rrvert ^{2p}.
\]
Rewrite the expression for $E$ by peeling off a copy of $\llvert
{\mtx {X}} \rrvert $. This move yields
\[
E = \Expect\trace \bigl[ \llvert {\mtx{X}} \rrvert \cdot \llvert {\mtx{X}}
\rrvert ^{2p-1} \bigr] = \Expect\trace \bigl[ \mtx{X} \cdot\sgn(\mtx{X})
\cdot\llvert {\mtx {X}} \rrvert ^{2p-1} \bigr].
\]
Apply the method of exchangeable pairs, Lemma~\ref{lem:exchange}, with
$\mtx{F}(\mtx{X}) = \sgn(\mtx{X}) \cdot\llvert {\mtx{X}}
\rrvert ^{2p - 1}$
to reach
\[
E = \frac{1}{2\alpha} \Expect\trace \bigl[ \bigl(\mtx{X} - \mtx{X}'
\bigr) \cdot \bigl( \sgn(\mtx{X}) \cdot\vert {\mtx{X}} \vert ^{2p-1} -
\operatorname{sgn} \bigl(\mtx{X}'\bigr) \cdot \bigl\vert {\mtx{X}'}
\bigr\vert ^{2p-1} \bigr) \bigr].
\]
To verify the regularity condition \eqref{eqn:regularity-mep} in
Lemma~\ref{lem:exchange}, compute that
\begin{eqnarray*}
&&\Expect{ \bigl\Vert{\bigl(\mtx{X} - \mtx{X}'\bigr) \cdot
\sgn(\mtx{X}) \cdot\llvert {\mtx{X}} \rrvert ^{2p-1} }
\bigr\Vert}_{}
\\
&&\qquad\leq\Expect \bigl( \llVert {\mtx{X}} \rrVert \llVert {\mtx{X} } \rrVert
^{2p-1} \bigr) + \Expect \bigl( \bigl\llVert {\mtx{X}'}
\bigr\rrVert \llVert {\mtx{X}} \rrVert ^{2p-1} \bigr)
\\
&&\qquad\leq2 \bigl(\Expect\llVert {\mtx{X}} \rrVert ^{2p}
\bigr)^{1/(2p)} \bigl(\Expect \llVert {\mtx{X} } \rrVert ^{2p}
\bigr)^{(2p-1)/2p}
\\
&&\qquad= 2 \Expect\llVert {\mtx{X} } \rrVert ^{2p} < \infty.
\end{eqnarray*}
We have used the fact that $\sgn(\mtx{X})$ is a unitary matrix, the
exchangeability of $(\mtx{X}, \mtx{X}')$, H\"{o}lder's inequality for
expectation and the fact that the Schatten $2p$-norm dominates the
spectral norm.

We intend to apply the mean value trace inequality to obtain an
estimate for the quantity $E$. Consider the function $h \dvtx s \mapsto
\sgn
(s) \cdot\llvert {s} \rrvert ^{2p-1}$. Its derivative $h'(s)
= (2p-1) \cdot\llvert {s} \rrvert ^{2p-2}$ is convex because
$p \geq1.5$. Lemma~\ref{lem:mvti}
delivers the bound
\begin{eqnarray*}
E & \leq&\frac{2p-1}{4\alpha} \Expect\trace \bigl[ \bigl( \mtx{X} -
\mtx{X}' \bigr)^2 \cdot \bigl( \llvert {\mtx{X}} \rrvert
^{2p-2} + \bigl\llvert {\mtx {X}'} \bigr\rrvert
^{2p-2} \bigr) \bigr]
\\
&=& \frac{2p-1}{2\alpha} \Expect\trace \bigl[ \bigl(\mtx{X} - \mtx{X}'
\bigr)^2 \cdot\llvert {\mtx{X}} \rrvert ^{2p-2} \bigr]
\\
&=& (2p-1) \cdot\Expect\trace \bigl[ \mtxx{\Delta}_{\mtx{X}} \cdot \llvert {
\mtx{X}} \rrvert ^{2p-2} \bigr]. %\phantom{\frac{1}{2}}
\end{eqnarray*}
The second line follows from the exchangeability of $\mtx{X}$ and
$\mtx{X}'$.
In the last line, we identify the conditional variance $\mtxx{\Delta
}_{\mtx{X}}$, defined in \eqref{eqn:conditional-variance}. As before,
the moment bound $\Expect\llVert {\mtx{X}} \rrVert
_{2p}^{2p} < \infty$ is strong
enough to justify using the pull-through property in this step.

To continue, we must find a copy of $E$ within the latter expression.
We can accomplish this goal using one of the basic results from the
theory of Schatten norms \cite{Bha97Matrix-Analysis}, Corollary IV.2.6.

%pr7.5 #&#
\begin{prop}[(H\"{o}lder inequality for trace)] \label{prop:holder-trace}
Let $p$ and $q$ be H\"{o}lder conjugate indices, that is, positive numbers
with the relationship \mbox{$q = p/(p-1)$}. Then
\[
\trace(\mtx{BC}) \leq\llVert {\mtx{B}} \rrVert _{p} \llVert {\mtx{C}}
\rrVert _{q} \qquad\mbox{for all $\mtx{B}, \mtx{C} \in\M^{d}$.}
\]
\end{prop}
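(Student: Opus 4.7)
The plan is to reduce this matrix inequality to the classical scalar H\"older inequality by passing through the singular values of $\mtx{B}$ and $\mtx{C}$, with the \emph{von Neumann trace inequality} serving as the bridge between the matrix and scalar worlds.

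First, I would recall that for each $\mtx{A} \in \M^d$ the Schatten norm admits the expression $\pnorm{p}{\mtx{A}} = \bigl(\sum_{i=1}^d \sigma_i(\mtx{A})^p\bigr)^{1/p}$, where $\sigma_1(\mtx{A}) \geq \dots \geq \sigma_d(\mtx{A}) \geq 0$ are the singular values arranged in decreasing order. This follows at once from the definition $\pnorm{p}{\mtx{A}} = (\trace \abs{\mtx{A}}^p)^{1/p}$ together with the singular value decomposition, since $\abs{\mtx{A}}$ has eigenvalues equal to the singular values of $\mtx{A}$.

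Second, I would establish the von Neumann trace inequality,
$$
\abs{\trace(\mtx{B}\mtx{C})} \leq \sum\nolimits_{i=1}^d \sigma_i(\mtx{B}) \, \sigma_i(\mtx{C}).
$$
Writing $\mtx{B} = \mtx{U}_B \mtx{\Sigma}_B \mtx{V}_B^\adj$ and $\mtx{C} = \mtx{U}_C \mtx{\Sigma}_C \mtx{V}_C^\adj$ via SVD and exploiting cyclicity of the trace reduces the task to bounding $\abs{\trace(\mtx{\Sigma}_B \mtx{W} \mtx{\Sigma}_C \mtx{W}')}$ for the unitaries $\mtx{W} \defby \mtx{V}_B^\adj \mtx{U}_C$ and $\mtx{W}' \defby \mtx{V}_C^\adj \mtx{U}_B$. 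One then invokes either the Ky Fan maximum principle (the sum of the top $k$ singular values equals $\sup_{\mtx{X}} \real \trace(\mtx{A}\mtx{X})$ over rank-$k$ partial isometries $\mtx{X}$) or a direct rearrangement argument applied to the diagonal entries to obtain the stated bound.

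Third, the classical scalar H\"older inequality applied to the singular values delivers
$$
\sum\nolimits_{i=1}^d \sigma_i(\mtx{B}) \, \sigma_i(\mtx{C}) \leq \bigl( \sum\nolimits_i \sigma_i(\mtx{B})^p \bigr)^{1/p} \bigl( \sum\nolimits_i \sigma_i(\mtx{C})^q \bigr)^{1/q} = \pnorm{p}{\mtx{B}} \, \pnorm{q}{\mtx{C}}.
$$
Combining the last two displays and noting $\trace(\mtx{B}\mtx{C}) \leq \abs{\trace(\mtx{B}\mtx{C})}$ completes the proof.

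The main obstacle is the von Neumann trace inequality in the second step: its standard proofs require genuine linear-algebraic machinery, whether via Ky Fan dominance for unitarily invariant norms, a rearrangement-inequality argument on the singular values, or a Hadamard three-lines interpolation built from polar decompositions. Once that ingredient is in hand, the Schatten-norm identity and the scalar H\"older inequality render the remaining steps routine.
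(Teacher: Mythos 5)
The paper does not actually prove this proposition; it cites it directly as \cite[Cor.~IV.2.6]{Bha97:Matrix-Analysis}, so there is no in-paper argument to compare against. Your proof --- reduce to the singular values via the von Neumann trace inequality, then apply scalar \Holder --- is the standard textbook derivation (and is essentially what underlies Bhatia's cited corollary), and it is correct. The one ingredient you flag as nontrivial, the von Neumann inequality $\abs{\trace(\mtx{BC})} \leq \sum_i \sigma_i(\mtx{B})\,\sigma_i(\mtx{C})$, is indeed where all the linear-algebraic work lives; your outline via SVD, cyclicity, and Ky Fan (or a rearrangement/ majorization argument) is the right route. A small pedantic note: $\trace(\mtx{BC})$ can be complex for general $\mtx{B},\mtx{C}\in\M^d$, so the displayed inequality should really be read as a bound on $\abs{\trace(\mtx{BC})}$ (or the real part); in the paper's application both factors are Hermitian so the trace is real and the issue is moot, and your step $\trace(\mtx{BC}) \leq \abs{\trace(\mtx{BC})}$ is fine there.
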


To complete the argument, apply the H\"{o}lder inequality for the trace
followed by the H\"{o}lder inequality for the expectation. Thus
\begin{eqnarray*}
E &\leq&(2p-1) \cdot\Expect \bigl[ \llVert {\mtxx{\Delta}_{\mtx
{X}} } \rrVert
_{p} \cdot { \bigl\Vert{\llvert {\mtx{X}} \rrvert ^{2p-2} } \bigr\Vert
}_{p/(p-1)} \bigr]
\\
&=& (2p-1) \cdot\Expect \bigl[ \llVert {\mtxx{\Delta}_{\mtx{X}} } \rrVert
_{p} \cdot \llVert {\mtx{X} } \rrVert _{2p}^{2p-2}
\bigr]
\\
&\leq&(2p-1) \cdot \bigl( \Expect\llVert {\mtxx{\Delta}_{\mtx
{X}} } \rrVert
_{p}^p \bigr)^{1/p} \cdot \bigl( \Expect\llVert {
\mtx{X} } \rrVert _{2p}^{2p} \bigr)^{(p-1)/p}
\\
&= &(2p-1) \cdot \bigl( \Expect\llVert {\mtxx{\Delta}_{\mtx{X}} } \rrVert
_{p}^p \bigr)^{1/p} \cdot E^{(p-1)/p}.
\end{eqnarray*}
Solve this algebraic inequality for the positive number $E$ to conclude that
\[
E \leq(2p-1)^p \cdot\Expect\llVert {\mtxx{\Delta}_{\mtx{X}} }
\rrVert _{p}^p.
\]
Extract the $(2p)$th root to establish the matrix BDG inequality.

%s8 #&#
\section{\texorpdfstring{Extension to general complex matrices.}
{Extension to general complex matrices}} \label{sec:rectangular}

Although, at first sight, it may seem that our theory is limited to
random Hermitian matrices,
results for general random matrices follow as a formal corollary \cite
{Rec11Simpler-Approach,Tro11User-Friendly-FOCM}.
The approach is based on a device from operator theory \cite
{Pau02Completely-Bounded}.

%de8.1 #&#
\begin{defn}[(Hermitian dilation)]
Let $\mtx{B}$ be a matrix in $\C^{d_1 \times d_2}$, and set $d = d_1 +
d_2$. The \emph{Hermitian dilation} of $\mtx{B}$ is the matrix
\[
\mathscr{D} (\mtx{B}): = %
\lleft[\matrix{ \zeromtx&
\mtx{B} \vspace*{2pt}
\cr
\mtx{B}^* & \zeromtx} \rright] %
\in
\mathbb{H}^{d}.
\]
\end{defn}

The dilation has two valuable properties. First, it preserves spectral
information,
%
%e8.1 #&#
\begin{equation}
\label{eqn:dilation-spectrum} \lambda_{\max} \bigl(\mathscr{D}(\mtx{B})\bigr) = \bigl
\llVert {\mathscr{D} (\mtx {B})} \bigr\rrVert = \llVert {\mtx{B}} \rrVert.
\end{equation}
Second, the square of the dilation satisfies
%
%e8.2 #&#
\begin{equation}
\label{eqn:dilation-square} \mathscr{D}(\mtx{B})^2 = %
\lleft[
\matrix{ \mtx{BB}^*& \mtx{0} \vspace*{2pt}
\cr
\mtx{0} & \mtx{B}^* \mtx {B} }
\rright].
\end{equation}

We can study a random matrix---not necessarily Hermitian---%
by applying our matrix concentration inequalities to the Hermitian dilation
of the random matrix. As an illustration, let us prove a Bernstein inequality
for general random matrices.

%

%co8.2 #&#
\begin{cor}[(Bernstein inequality for general matrices)] \label{cor:bern-gen}
Consider a finite sequence $(\mtx{Z}_k)_{k \geq1}$ of independent
random matrices in $\C^{d_1 \times d_2}$ that satisfy
\[
\Expect\mtx{Z}_k = \mtx{0} \quad\mbox{and}\quad \llVert {
\mtx{Z}_k } \rrVert \leq R \qquad\mbox{almost surely for each index $k$.}
\]
Define $d \mathrell=d_1 + d_2$, and introduce the variance measure
\[
\sigma^2 \mathrell=\max \biggl\{ \biggl\llVert
{\sum_k \Expect\bigl( \mtx{Z}_k
\mtx{Z}_k^*\bigr) } \biggr\rrVert,  \biggl\llVert {\sum
_k \Expect\bigl( \mtx{Z}_k^*
\mtx{Z}_k \bigr) } \biggr\rrVert \biggr\}.
\]
Then, for all $t \geq0$,
%
%e8.3 #&#
\begin{equation}
\label{eqn:bern-gen-tail} \mathbb{P} \biggl\{ {\biggl\llVert {\sum
_k \mtx{Z}_k } \biggr\rrVert \geq t } \biggr\}
\leq d \cdot\exp \biggl\{ \frac{-t^2}{3 \sigma^2 + 2 R t} \biggr\}.
\end{equation}
Furthermore,
%
%e8.4 #&#
\begin{equation}
\label{eqn:bern-gen-mean} \Expect\biggl\llVert {\sum_k
\mtx{Z}_k } \biggr\rrVert \leq\sigma\sqrt{3 \log d} + R \log d.
\end{equation}
\end{cor}
\begin{pf}
Consider the random series
$\sum_k \mathscr{D}(\mtx{Z}_k)$.
The summands are independent, random Hermitian matrices that satisfy
\[
\Expect\mathscr{D}(\mtx{Z}_k) = \mtx{0} \quad\mbox{and}\quad \bigl\llVert {
\mathscr{D}(\mtx{Z}_k) } \bigr\rrVert \leq R.
\]
The second identity depends on the spectral property \eqref
{eqn:dilation-spectrum}.
Therefore, the matrix Bernstein inequality, Corollary~\ref
{cor:bernstein}, applies.
To state the outcome, we first note that
$\lambda_{\max}(\sum_k \mathscr{D}(\mtx{Z}_k)) = \llVert
{\sum_k \mtx{Z}_k } \rrVert $,
again because of the spectral property \eqref{eqn:dilation-spectrum}.
Next, use the formula \eqref{eqn:dilation-square} to compute that
\[
\biggl\llVert {\sum_k \Expect \bigl[
\mathscr{D}(\mtx{Z}_k)^2 \bigr] } \biggr\rrVert =
\left\Vert {\lleft[\matrix{ \displaystyle\sum_k \Expect
\bigl( \mtx{Z}_k \mtx {Z}_k^* \bigr) & \mtx{0}
\vspace*{2pt}
\cr
\mtx{0} & \displaystyle\sum_k \Expect\bigl(
\mtx{Z}_k^*\mtx{Z}_k \bigr) } \rright] } \right
\Vert = \sigma^2.
\]
This observation completes the proof.\vadjust{\goodbreak}
\end{pf}

Corollary~\ref{cor:bern-gen} has important implications for the problem
of estimating a matrix from noisy measurements.
Indeed, bound %
\eqref{eqn:bern-gen-mean} leads to a sample complexity analysis for
matrix completion \cite{FoygelSr11}.
Moreover, a variety of authors have used tail bounds of the form \eqref
{eqn:bern-gen-tail} to control the error of convex optimization methods
for matrix estimation %
\cite
{Gro11Recovering-Low-Rank,Rec11Simpler-Approach,NegahbanWa12,MackeyTaJo11}.

%

%s9 #&#
\section{\texorpdfstring{A sum of conditionally independent, zero-mean matrices.}
{A sum of conditionally independent, zero-mean matrices}}
\label
{sec:cond-zero-mean}

A chief advantage of the method of exchangeable pairs is its ability to
handle random matrices constructed from \emph{dependent} random
variables. In this section, we briefly describe a way to relax the
independence requirement
when studying a sum of random matrices. In Sections~\ref{sec:combinatorial-sum} and \ref{sec:self-repro}, we develop more
elaborate examples.

%s9.1 #&#
\subsection{\texorpdfstring{Formulation.}{Formulation}}
Let us consider a finite sequence $( \mtx{Y}_1, \ldots, \mtx{Y}_n )$ of
random Hermitian matrices that are conditionally independent given an
auxiliary random element $Z$.
Suppose moreover that
%
%e9.1 #&#
\begin{equation}
\label{eqn:cond-zero-mean} \Expect[\mtx{Y}_k \vertt Z ] = \mtx{0} \qquad\mbox{almost
surely for each index $k$.}
\end{equation}
We are interested in the sum of these conditionally independent,
zero-mean random matrices
%
%e9.2 #&#
\begin{equation}
\label{eqn:cond-zero-mean-sum} \mtx{X} \mathrell=\mtx{Y}_1 + \cdots+
\mtx{Y}_n.
\end{equation}
This type of series includes many examples that arise in practice.

%ex9.1 #&#
\begin{example}[(Rademacher series with random matrix coefficients)]
Consider a finite sequence $(\mtx{W}_k)_{k \geq1}$ of random Hermitian
matrices.
Suppose the sequence $(\eps_k)_{k \geq1}$ consists of independent
Rademacher random
variables that are independent from the random matrices. Consider the
random series
\[
\sum_k \eps_k \mtx{W}_k.
\]
The summands may be strongly dependent on each other, but the
independence of the
Rademacher variables ensures that the summands are conditionally
independent and of zero
mean \eqref{eqn:cond-zero-mean} given $Z \mathrell=(\mtx
{W}_k)_{k \geq1}$.
\end{example}

%s9.2 #&#
\subsection{\texorpdfstring{A matrix Stein pair.}{A matrix Stein pair}}

Let us describe how to build a matrix Stein pair $(\mtx{X}, \mtx{X}')$
for the sum \eqref{eqn:cond-zero-mean-sum}
of conditionally independent, zero-mean random matrices.
The approach is similar to the case of an independent sum, which
appears in Section~\ref{sec:indep-sum}.
For each $k$, we draw a random matrix $\mtx{Y}_k'$ so that $\mtx{Y}_k'$
and $\mtx{Y}_k$ are conditionally i.i.d. given $(\mtx{Y}_j)_{j\neq k}$.
Then, independently, we draw an index $K$ uniformly at random from $\{
1, \ldots, n\}$.
As in Section~\ref{sec:indep-sum}, the random matrix
\[
\mtx{X}' \mathrell=\mtx{Y}_1 + \cdots+
\mtx{Y}_{K-1} + \mtx{Y}_K' + \mtx
{Y}_{K+1} + \cdots+ \mtx{Y}_n
\]
is an exchangeable counterpart to $\mtx{X}$. The conditional
independence and conditional zero-mean \eqref{eqn:cond-zero-mean} assumptions
imply that, almost surely,
\[
\Expect\bigl[ \mtx{Y}_k' \vertt(\mtx{Y}_j)_{j\neq k}
\bigr] = \Expect\bigl[ \mtx {Y}_k \vertt(\mtx{Y}_j)_{j\neq k}
\bigr] = \Expect\bigl[ \Expect[ \mtx{Y}_k \vertt Z ] \vertt(
\mtx{Y}_j)_{j\neq k} \bigr] = \mtx{0}. %
\]
Hence,
\begin{eqnarray*}
\Expect\bigl[\mtx{X} - \mtx{X}' \vertt(\mtx{Y}_j)_{j \geq1}
\bigr] &=& \Expect\bigl[ \mtx{Y}_K - \mtx{Y}_K'
\vertt(\mtx{Y}_j)_{j \geq1} \bigr]
\\
&=& \frac{1}{n} \sum_{k=1}^n \bigl(
\mtx{Y}_k - \Expect\bigl[ \mtx{Y}_k' \vertt(
\mtx{Y}_j)_{j\neq k} \bigr] \bigr) = \frac{1}{n} \sum
_{k=1}^n \mtx{Y}_k =
\frac{1}{n} \mtx{X}.
\end{eqnarray*}
Therefore, $(\mtx{X},\mtx{X}')$ is a matrix Stein pair with scale
factor $\alpha= n^{-1}$.

We can determine the conditional variance after a short argument that parallels
computation \eqref{eqn:indep-sum-DeltaX} in the independent setting,
%
%e9.3 #&#
\begin{eqnarray}
\label{eqn:cond-zero-mean-DeltaX} %
\mtxx{\Delta}_{\mtx{X}}
&=&
\frac{n}{2} \cdot\Expect \bigl[ \bigl(\mtx{Y}_K -
\mtx{Y}_K'\bigr)^2 \vertt (
\mtx{Y}_j)_{j \geq1} \bigr]
\nonumber
\\[-8pt]
\\[-8pt]
\nonumber
&=& \frac{1}{2} \sum_{k=1}^n \bigl(
\mtx{Y}_k^2 + \Expect\bigl[ \mtx {Y}_k^2
\vertt(\mtx{Y}_j)_{j \neq k} \bigr] \bigr).
\end{eqnarray}
Expression \eqref{eqn:cond-zero-mean-DeltaX} shows that, even in the
presence of some dependence,
we can control the size of the conditional expectation uniformly if we
control the size of the individual
summands.

Using the Stein pair $(\mtx{X}, \mtx{X}')$ and expression \eqref
{eqn:cond-zero-mean-DeltaX},
we may develop a variety of concentration inequalities for
conditionally independent, zero-mean sums that are
analogous to our results for independent sums. We omit detailed examples.

%

%s10 #&#
\section{\texorpdfstring{Combinatorial sums of matrices.}{Combinatorial sums of matrices}} \label{sec:combinatorial-sum}

The method of exchangeable pairs can also be applied to many types of
highly symmetric distributions.
In this section, we study a class of \emph{combinatorial matrix
statistics}, which generalize
the scalar statistics studied by Hoeffding \cite{Hoeffding51}.

%s10.1 #&#
\subsection{\texorpdfstring{Formulation.}{Formulation}}

Consider a deterministic array $( \mtx{A}_{jk} )_{j,k = 1}^n$ of
Hermitian matrices, and
let $\pi$ be a uniformly random permutation on $\{1, \ldots, n\}$.
Define the random matrix
%
%e10.1 #&#
\begin{equation}
\label{eqn:comb-sum} \mtx{Y} \mathrell=\sum_{j=1}^n
\mtx{A}_{j \pi(j)} \qquad\mbox{whose mean } \Expect\mtx{Y} = \frac{1}{n} \sum
_{j,k=1}^n \mtx{A}_{jk}.
\end{equation}
The combinatorial sum $\mtx{Y}$ is a natural candidate for an
exchangeable pair analysis.
Before we describe how to construct a matrix Stein pair, let us mention
a few problems that lead to a random matrix of the form $\mtx{Y}$.

%

%ex10.1 #&#
\begin{example}[(Sampling without replacement)]
Consider a finite collection $\coll{B} \mathrell=\{ \mtx
{B}_1, \ldots,
\mtx
{B}_n \}$ of deterministic Hermitian matrices.
Suppose that we want to study a sum of $s$ matrices sampled randomly
from $\coll{B}$ without replacement.
We can express this type of series in the form
\[
\mtx{W} \mathrell=\sum_{j=1}^s
\mtx{B}_{\pi(j)},
\]
where $\pi$ is a random permutation on $\{1, \ldots, n\}$. The matrix
$\mtx{W}$
is therefore an example of a combinatorial sum.
\end{example}

%ex10.2 #&#
\begin{example}[(A randomized ``inner product'')]
Consider two fixed sequences of complex matrices
\[
\mtx{B}_1, \ldots, \mtx{B}_n \in\C^{d_1 \times s}
\quad\mbox{and}\quad \mtx{C}_1, \ldots, \mtx{C}_n \in
\C^{s \times d_2}.
\]
We may form a permuted matrix ``inner product'' by
arranging one sequence in random order, multiplying
the elements of the two sequences together, and summing the terms. That is,
we are interested in the random matrix
\[
\mtx{Z} \mathrell=\sum_{j=1}^n
\mtx{B}_j \mtx{C}_{\pi(j)}.
\]
This random matrix $\coll{D}( \mtx{Z} )$ is a combinatorial
sum of Hermitian matrices.
\end{example}

%s10.2 #&#
\subsection{\texorpdfstring{A matrix Stein pair.}{A matrix Stein pair}} \label{sec:diag-perm}

To study the combinatorial sum \eqref{eqn:comb-sum} of matrices using
the method of exchangeable pairs,
we first introduce the zero-mean random matrix
\[
\mtx{X} \mathrell=\mtx{Y} - \Expect\mtx{Y}.
\]
To construct a matrix Stein pair $(\mtx{X}, \mtx{X}')$,
we draw a pair $(J, K)$ of indices independently of $\pi$ and uniformly
at random from $\{1, \ldots, n\}^2$.
Define a second random permutation $\pi' \mathrell=\pi
\circ(J, K)$ by
composing $\pi$ with the transposition of the random indices $J$ and
$K$. The pair $(\pi, \pi')$ is exchangeable, so
\[
\mtx{X}' \mathrell=\sum_{j=1}^n
\mtx{A}_{j \pi'(j)} - \Expect\mtx{Y}
\]
is an exchangeable counterpart to $\mtx{X}$.

To verify that $(\mtx{X}, \mtx{X}')$ is a matrix Stein pair, we
calculate that
\begin{eqnarray*}
\Expect\bigl[ \mtx{X} - \mtx{X}' \vertt\pi\bigr] &=&
\Expect [ \mtx{A}_{J\pi(J)} + \mtx{A}_{K\pi(K)} - \mtx
{A}_{J\pi
(K)} - \mtx{A}_{K \pi(J)} \vertt\pi ]
\\
&=& \frac{1}{n^2} \sum_{j, k = 1}^n [
\mtx{A}_{j\pi
(j)} + \mtx{A}_{k\pi(k)} - \mtx{A}_{j \pi(k)} -
\mtx{A}_{k \pi(j)} ]
\\
&=& \frac{2}{n} (\mtx{Y} - \Expect\mtx{Y}) =
\frac{2}{n} \mtx{X}.
\end{eqnarray*}
The first identity holds because the %
sums $\mtx{X}$ and $\mtx{X}'$ differ for only
four choices of indices. Thus %
$(\mtx{X}, \mtx{X}')$ is a Stein pair with scale factor $\alpha= 2/n$.

Turning to the conditional variance, we find that
%
%e10.2 #&#
\begin{eqnarray}
\label{eqn:diag-perm-DeltaX} \mtxx{\Delta}_{\mtx{X}}(\pi)
\nonumber
&=& \frac{n}{4}
\Expect \bigl[ \bigl(\mtx{X} - \mtx{X}'\bigr)^2 \vertt\pi
\bigr]
\nonumber
\\[-8pt]
\\[-8pt]
\nonumber
&=& \frac{1}{4n} \sum_{j,k=1}^n [
\mtx{A}_{j\pi(j)} + \mtx {A}_{k \pi(k)} - \mtx{A}_{j \pi(k)} -
\mtx{A}_{k \pi(j)} ]^2.
\end{eqnarray}
%
%%
%%e10.3 #&#
%%
The structure of the conditional variance differs from previous
examples, but
we recognize that $\mtxx{\Delta}_{\mtx{X}}$ is controlled when the
matrices $\mtx{A}_{jk}$
are bounded. %
%

%

%

%

%s10.3 #&#
\subsection{\texorpdfstring{Exponential concentration for a
combinatorial sum.}
{Exponential concentration for a combinatorial sum}} \label
{sec:diag-perm-2}

We can apply our matrix concentration results to study the behavior of
a combinatorial
sum of matrices. As an example, let us present a Bernstein-type inequality.
The argument is similar to the proof of Corollary~\ref{cor:bernstein},
so we leave the details to Appendix \ref{sec:pf-diag-perm-2}.

%
%co10.3 #&#
\begin{cor}[(Bernstein inequality for a combinatorial matrix sum)] \label
{cor:diag-perm-2}
Consider an array $( \mtx{A}_{jk} )_{j, k = 1}^n$ of deterministic
matrices in $\mathbb{H}^{d}$ that satisfy
\[
\sum_{j,k=1}^n \mtx{A}_{jk} =
\mtx{0} %
\quad\mbox{and}\quad \llVert {\mtx{A}_{jk} } \rrVert \leq R\qquad
\mbox{for each pair $(j, k)$ of indices.}
\]
Define the random matrix
$
\mtx{X} \mathrell=\sum_{j=1}^n \mtx{A}_{j \pi(j)}
$,
where $\pi$ is a uniformly random permutation on $\{ 1, \ldots, n \}$.
Then, for all $t \geq0$,
\[
\mathbb{P} \bigl\{ {\lambda_{\max}(\mtx{X}) \geq t } \bigr\} \leq d
\cdot\exp \biggl\{ \frac{-t^2}{12 \sigma^2 + 4\sqrt{2} R t} \biggr\} \qquad\mbox{for } \sigma^2
\mathrell=\frac{1}{n} \Biggl\llVert {\sum
_{j,k=1}^n \mtx {A}_{jk}^2 }
\Biggr\rrVert.
\]
Furthermore,
\[
\Expect\lambda_{\max}(\mtx{X}) \leq\sigma\sqrt{12 \log d} + 2\sqrt{2} R
\log d.
\]
\end{cor}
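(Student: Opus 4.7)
The plan is to apply Theorem~\ref{thm:concentration-subgauss} to the matrix Stein pair $(\mtx{X}, \mtx{X}')$ constructed in Section~\ref{sec:diag-perm}. Since the hypothesis $\sum_{j,k}\mtx{A}_{jk}=\mtx{0}$ forces $\Expect \mtx{Y}=\mtx{0}$, the random matrix $\mtx{X}$ coincides with $\mtx{Y}$ itself. The scale factor is $\alpha = 2/n$, and the conditional variance $\mtx{\Delta}_{\mtx{X}}$ is given by~\eqref{eqn:diag-perm-DeltaX}. My first step is to simplify $\mtx{\Delta}_{\mtx{X}}$ using the four-term operator convexity inequality $(\mtx{B}_1+\mtx{B}_2-\mtx{B}_3-\mtx{B}_4)^2 \psdle 4(\mtx{B}_1^2+\mtx{B}_2^2+\mtx{B}_3^2+\mtx{B}_4^2)$, which follows by iterating~\eqref{eqn:square-convex}. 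Introducing the auxiliary random matrix $\mtx{V} \defby \sum_{j=1}^n \mtx{A}_{j\pi(j)}^2$ and resumming over $j$ and $k$, I expect the bound $\mtx{\Delta}_{\mtx{X}} \psdle 2\mtx{V} + 2\sigma^2\Id$, where the $\sigma^2\Id$ contribution comes from off-diagonal summations of the form $\sum_{j,k}\mtx{A}_{j\pi(k)}^2 = \sum_{j,\ell}\mtx{A}_{j\ell}^2 \psdle n\sigma^2\Id$.

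With this control of the conditional variance in hand, applying Theorem~\ref{thm:concentration-subgauss} reduces the problem to bounding the function $r(\psi)$ from~\eqref{eqn:r-psi}. Monotonicity of the trace exponential, together with the centering estimate $\Expect \mtx{V} \psdle \sigma^2 \Id$, gives the reduction $r(\psi) \leq 4\sigma^2 + \psi^{-1}\log\Expect\ntr\econst^{2\psi\tilde{\mtx{V}}}$, where $\tilde{\mtx{V}} \defby \mtx{V} - \Expect\mtx{V}$. It therefore suffices to control the trace mgf of the centered random matrix $\tilde{\mtx{V}}$.

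The heart of the argument is to apply Lemma~\ref{lem:mgf-bounds} to $\tilde{\mtx{V}}$ via a \emph{second} matrix Stein pair $(\tilde{\mtx{V}}, \tilde{\mtx{V}}')$, built from the same transposition mechanism $\pi' = \pi\circ(J,K)$ of Section~\ref{sec:diag-perm}; a short computation mirroring the one for $(\mtx{X}, \mtx{X}')$ confirms the scale factor is again $2/n$. To bound its conditional variance $\mtx{\Delta}_{\tilde{\mtx{V}}}$, I invoke the four-term operator convexity inequality once more, now applied to signed sums of squared matrices, and tame the resulting quartic terms using the pointwise estimate $\mtx{A}_{jk}^4 \psdle R^2\mtx{A}_{jk}^2$ that follows from $\mtx{A}_{jk}^2 \psdle R^2\Id$. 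Resumming produces $\mtx{\Delta}_{\tilde{\mtx{V}}} \psdle 2R^2\,\tilde{\mtx{V}} + 4R^2\sigma^2\,\Id$, which matches the hypothesis of Lemma~\ref{lem:mgf-bounds} exactly. That lemma yields $\log\Expect\ntr\econst^{2\psi\tilde{\mtx{V}}} \leq 8R^2\sigma^2\psi^2/(1-4R^2\psi)$, and hence $r(\psi) \leq 4\sigma^2 + 8R^2\sigma^2\psi/(1-4R^2\psi)$. Choosing $\psi = (8R^2)^{-1}$ gives $r(\psi) \leq 6\sigma^2$ and $1/\sqrt{\psi} = 2\sqrt{2}\,R$, so substituting into~\eqref{eqn:refined-tail} and~\eqref{eqn:refined-mean} produces the denominator $12\sigma^2 + 4\sqrt{2}\,Rt$ and the expectation bound $\sigma\sqrt{12\log d} + 2\sqrt{2}\,R\log d$.

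The main obstacle I foresee is the algebra in the second reduction: the formula for $\mtx{\Delta}_{\tilde{\mtx{V}}}$ involves squaring a signed sum of four \emph{fourth}-power matrices, and without a clean structural identity the bookkeeping threatens to get out of hand. The key simplification that keeps the calculation in check is the operator inequality $\mtx{A}_{jk}^4 \psdle R^2\mtx{A}_{jk}^2$, which collapses those fourth powers back to second powers whose double sum re-expresses naturally in terms of $\tilde{\mtx{V}}$ and $\sigma^2\Id$, closing the bootstrap and making Lemma~\ref{lem:mgf-bounds} applicable.
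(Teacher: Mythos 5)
Your proposal is correct and follows the same route as the paper's appendix argument: the four-term operator convexity bound on $\mtx{\Delta}_{\mtx{X}}$, a bootstrap via a second Stein pair for the centered sum of squares, the estimate $\mtx{A}_{jk}^4 \psdle R^2 \mtx{A}_{jk}^2$, Lemma~\ref{lem:mgf-bounds}, and then Theorem~\ref{thm:concentration-subgauss} at $\psi = (8R^2)^{-1}$. The only cosmetic difference is that you carry $\tilde{\mtx{V}} = \mtx{V} - \Expect\mtx{V}$ where the paper uses $\mtx{W} = 2\tilde{\mtx{V}}$, a rescaling that changes nothing; your constants and final bounds agree exactly with the paper's.
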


%

%s11 #&#
\section{\texorpdfstring{Self-reproducing matrix functions.}{Self-reproducing matrix functions}} \label{sec:self-repro}

The method of exchangeable pairs can also be used to analyze nonlinear
matrix-valued functions of random variables.
In this section, we explain how to analyze matrix functions that
satisfy a \emph{self-reproducing property}.

%s11.1 #&#
\subsection{\texorpdfstring{Example: Matrix second-order Rademacher chaos.}
{Example: Matrix second-order Rademacher chaos}}

We begin with an example that shows how the self-reproducing property
might arise.
Consider a quadratic form that takes on random matrix values
%
%e11.1 #&#
\begin{equation}
\label{eqn:second-rad-chaos} \mtx{H}(\vct{\eps}) \mathrell=\sum
_k \sum_{j < k} \eps
_j \eps_k \mtx{A}_{jk}.
\end{equation}
In this expression, $\vct{\eps}$ is a finite vector of independent Rademacher
random variables. The array $( \mtx{A}_{jk} )_{j,k \geq1}$ consists of
deterministic Hermitian matrices,
and we assume that $\mtx{A}_{jk} = \mtx{A}_{kj}$.
Observe that the summands in $\mtx{H}(\vct{\eps})$ are dependent, and
they do not satisfy the conditional zero-mean property \eqref
{eqn:cond-zero-mean} in general.
Nevertheless, $\mtx{H}(\vct{\eps})$ does satisfy a fruitful
self-reproducing property
\begin{eqnarray*}
\sum_k \bigl(\mtx{H}(\vct{\eps}) - \Expect\bigl[
\mtx{H}(\vct {\eps}) \vertt(\eps_j)_{j\neq k} \bigr]\bigr) &=&
\sum_k \sum_{j \neq k}
\eps_j \bigl(\eps_k - \Expect[ \eps_k ]
\bigr)\mtx{A}_{jk}
\\
&=& \sum_k \sum_{j \neq k}
\eps_j \eps_k \mtx{A}_{jk} = 2 \mtx{H}(\vct{
\eps}).
\end{eqnarray*}
We have applied the pull-through property of conditional expectation,
the assumption that the Rademacher variables are independent and the
fact that $\mtx{A}_{jk} = \mtx{A}_{kj}$. As we will see, this type of
self-reproducing condition can be used to construct a matrix Stein pair.

A random matrix of the form \eqref{eqn:second-rad-chaos}
is called a \emph{second-order Rademacher chaos}.
This class of random matrices arises in a variety of situations, including
randomized linear algebra \cite{CD12Matrix-Probing},
compressed sensing \cite{Rau10Compressive-Sensing}, Section~9,
and chance-constrained optimization \cite{CheungSoWa11}.
Indeed, concentration inequalities for the matrix-valued
Rademacher chaos have many potential applications.

%s11.2 #&#
\subsection{\texorpdfstring{Formulation and matrix Stein pair.}
{Formulation and matrix Stein pair}}

In this section, we describe a more general version of the
self-reproducing property.
Suppose that $\vctt{z} \mathrell=(Z_1, \ldots, Z_n)$ is a
random vector
taking values in a
Polish space $\metricspace$. First, we construct an exchangeable counterpart
%
%e11.2 #&#
\begin{equation}
\label{eqn:zee-prime} \vctt{z}' \mathrell=\bigl(Z_1,
\ldots, Z_{K-1}, Z_K', Z_{K+1}, \ldots,
Z_n\bigr),
\end{equation}
where $Z_k$ and $Z_k'$ are conditionally i.i.d. given $(Z_j)_{j \neq
k}$, and $K$ is an independent coordinate
drawn uniformly at random from $\{1, \ldots, n\}$.

Next, let $\mtx{H} \dvtx\metricspace\to\mathbb{H}^{d}$ be a bounded
measurable function.
Assume that $\mtx{H}(\vctt{z})$ satisfies an abstract \emph
{self-reproducing property}:
for a parameter $s > 0$,
\[
\sum_{k=1}^n
\bigl(\mtx{H}(\vctt{z}) - \Expect\bigl[ \mtx {H}(\vctt {z}) \vertt(Z_j)_{j \neq k}
\bigr] \bigr) = s \cdot\bigl(\mtx{H}(\vctt{z}) - \Expect\mtx{H}(\vctt{z})\bigr)
\qquad\mbox {almost surely.}
\]
Under this assumption, we can easily check that the random matrices
\[
\mtx{X} \mathrell=\mtx{H}(\vctt{z}) - \Expect\mtx {H}(\vctt{z})
\quad\mbox{and}\quad \mtx{X}' \mathrell=\mtx{H}\bigl(
\vctt{z}'\bigr) - \Expect\mtx {H}(\vctt{z})
\]
form a matrix Stein pair. Indeed,
\[
\Expect\bigl[ \mtx{X} - \mtx{X}' \vertt\vctt{z} \bigr] = \Expect
\bigl[ \mtx{H}(\vctt{z}) - \mtx{H}\bigl(\vctt{z}'\bigr) \vertt\vctt{z}
\bigr] = \frac{s}{n} \bigl( \mtx{H}(\vctt{z}) - \Expect\mtx{H}(\vctt{z})
\bigr) = \frac{s}{n} \mtx{X}.
\]
We see that $(\mtx{X},\mtx{X}')$ is a matrix Stein pair with scaling
factor $\alpha= s/n$.

Finally, we compute the conditional variance
%
%e11.3 #&#
\begin{eqnarray}\label{eqn:self-repro-DeltaX}
\mtxx{\Delta}_{\mtx{X}}(\vctt{z}) &=& \frac{n}{2s} \Expect
\bigl[ \bigl(\mtx {H}(\vctt{z}) - \mtx{H}\bigl(\vctt{z}'\bigr)
\bigr)^2 \vertt\vctt{z} \bigr]
\nonumber
\\[-8pt]
\\[-8pt]
\nonumber
 &= &\frac{1}{2s} \sum_{k=1}^n
\Expect \bigl[ \bigl(\mtx{H}(\vctt{z}) - \mtx{H}\bigl(Z_1, \ldots,
Z_k', \ldots, Z_n\bigr) \bigr)^2
\vertt\vctt{z} \bigr].
\end{eqnarray}
We discover that the conditional variance is small when $\mtx{H}$ has
controlled coordinate differences.
In this case, the method of exchangeable pairs provides good concentration
inequalities for the random matrix $\mtx{X}$.

%

%

%s11.3 #&#
\subsection{\texorpdfstring{Matrix bounded differences inequality.}
{Matrix bounded differences inequality}}

As an example, %
we can develop a bounded differences inequality
for random matrices by appealing to Theorem~\ref{thmm:concentration-bdd}.

%co11.1 #&#
\begin{cor}[(Matrix bounded differences)] \label{cor:bound-diff}
Let $\vctt{z} \mathrell=(Z_1, \ldots, Z_n)$ be a random
vector taking
values in a Polish space $\metricspace$,
and, for each index $k$, let $Z_k'$ and $Z_k$ be conditionally
i.i.d. given $(Z_j)_{j \neq k}$.
Suppose that $\mtx{H}\dvtx\metricspace\to\mathbb{H}^{d}$ is a
function that satisfies
the self-reproducing property
\[
\sum_{k=1}^n \bigl(\mtx{H}(\vctt{z}) -
\Expect\bigl[ \mtx {H}(\vctt {z}) \vertt(Z_j)_{j \neq k} \bigr]
\bigr) = s \cdot\bigl(\mtx{H}(\vctt{z}) - \Expect\mtx{H}(\vctt{z})\bigr)\qquad \mbox
{almost surely}
\]
for a parameter $s > 0$ as well as the bounded differences condition
%
%e11.4 #&#
\begin{eqnarray}
\label{eqn:bdd-diff}\qquad
&\Expect \bigl[ \bigl(\mtx{H}(\vctt{z}) - \mtx{H}
\bigl(Z_1, \ldots, Z_k', \ldots,
Z_n\bigr) \bigr)^2 \vertt\vctt{z} \bigr] \psdle
\mtx{A}_k^2\qquad \mbox{for each index $k$}
\end{eqnarray}
almost surely, where $\mtx{A}_k$ is a deterministic matrix in $\mathbb
{H}^{d}$.
Then, for all $t\geq0$,
\[
\mathbb{P} \bigl\{ {\lambda_{\max}\bigl(\mtx{H}(\vctt{z}) - \Expect\mtx
{H}(\vctt {z})\bigr) \geq t} \bigr\} \leq d \cdot\econst^{-st^2/L} \qquad\mbox{for }
L \mathrell=\Biggl\llVert {\sum_{k=1}^n
\mtx{A}_k^2 } \Biggr\rrVert.
\]
Furthermore,
\[
\Expect\lambda_{\max}\bigl(\mtx{H}(\vctt{z}) - \Expect\mtx{H}(\vctt{z})
\bigr) \leq\sqrt{\frac{L \log d}{s}}.
\]
\end{cor}

In the scalar setting, Corollary~\ref{cor:bound-diff} reduces to a
version of
McDiarmid's bounded difference inequality \cite{McDiarmid89}. The
result also complements the
matrix bounded difference inequality of \cite{Tro11User-Friendly-FOCM},
Corollary~7.5, which
requires independent input variables but makes no self-reproducing assumption.

\begin{pf*}{Proof of Corollary \ref{cor:bound-diff}}
Since $\mtx{H}(\vctt{z})$ is self-reproducing,
we may construct a matrix Stein pair $(\mtx{X}, \mtx{X}')$
with scale factor $\alpha= s/n$ as in Section~\ref{sec:self-repro}.
According to \eqref{eqn:self-repro-DeltaX}, the conditional variance
of the pair satisfies
\begin{eqnarray*}
\mtxx{\Delta}_{\mtx{X}} &= &\frac{1}{2s} \sum
_{k=1}^n \Expect \bigl[ \bigl(\mtx{H}(\vctt{z}) -
\mtx{H}\bigl(Z_1, \ldots, Z_k', \ldots,
Z_n\bigr)\bigr)^2 \vertt\vctt{z} \bigr]
\\
&\psdle&\frac{1}{2s} \sum_{k=1}^n
\mtx{A}_k^2 \psdle\frac
{L}{2s}\cdot\Id.
\end{eqnarray*}
We have used the bounded differences condition \eqref{eqn:bdd-diff} and
the definition of the bound $L$.
To complete the proof, we apply the concentration result, Theorem~\ref
{thmm:concentration-bdd},
with the parameters $c = 0$ and $v = L/2s$.
\end{pf*}

\begin{appendix}\label{app}
%s12 #&#
\section{\texorpdfstring{Proof of Theorem \lowercase{\protect\ref{thmm:concentration-subgauss}}}
{Proof of Theorem 5.1}} \label{sec:subgauss-pf}
The proof of the refined exponential concentration bound, Theorem~\ref
{thmm:concentration-subgauss}, parallels the argument in Theorem~\ref
{thmm:concentration-bdd}, but it differs at an important point. In the
earlier result, we used an almost sure bound on the conditional
variance to control the derivative of the trace m.g.f. This time, we use
entropy inequalities to introduce finer information about the behavior
of the conditional variance. The proof is essentially a matrix version
of Chatterjee's argument \cite{Cha08Concentration-Inequalities}, Theorem~3.13.

Our main object is to bound the trace m.g.f. of $\mtx{X}$ in terms of the
trace m.g.f. of the conditional variance.
The next result summarizes our bounds.
%
%le12.1 #&#
\begin{lemma} [(Refined trace m.g.f. estimates)] \label{lem:mgf-control}
Let $(\mtx{X}, \mtx{X}')$ be a matrix Stein pair, and assume that
$\mtx
{X}$ is almost surely bounded in norm.
Then the normalized trace m.g.f. $m(\theta) \mathrell=\Expect
\ntr\econst
^{\theta\mtx{X}}$ satisfies the bounds
%
%e12.1 #&#
\begin{eqnarray}\label{eqn:mgf-control}
\log m(\theta) &\leq& \frac{1}{2} \log \biggl(
\frac{1}{1-\theta^2/\psi} \biggr) \log \Expect\ntr\econst^{\psi\mtxx{\Delta}_{\mtx{X}}}
\nonumber
\\[-8pt]
\\[-8pt]
\nonumber
&\leq& \frac{ \theta^2/\psi}{2 (1 - \theta^2/\psi)} \log \Expect \ntr
\econst^{\psi\mtxx{\Delta}_{\mtx{X}}} %
\qquad\mbox{for $\psi> 0$ and $0 \leq\theta< \sqrt{
\psi}$.}
\end{eqnarray}
\end{lemma}
We establish Lemma~\ref{lem:mgf-control} in Section~\ref{sec:pf-claim}
et seq.
Afterward, in Section~\ref{sec:fancy-lt-arg}, we invoke the matrix
Laplace transform bound to complete the proof of Theorem~\ref
{thmm:concentration-subgauss}.

%s12.1 #&#
\subsection{\texorpdfstring{The derivative of the trace m.g.f.}
{The derivative of the trace m.g.f.}} \label{sec:pf-claim}

The first steps of the argument are the same as in the proof of
Theorem~\ref{thmm:concentration-bdd}.
Since $\mtx{X}$ is almost surely bounded, we need not worry about
regularity conditions.
The derivative of the trace m.g.f. satisfies
%
%e12.2 #&#
\begin{equation}
\label{eqn:tmgf-prime-2} m'(\theta) = \Expect\trace \bigl[ \mtx{X}
\econst^{\theta\mtx{X}} \bigr]\qquad \mbox{for $\theta\in\R$.}
\end{equation}
Lemma~\ref{lem:mgf-derivative} provides a bound for the derivative in
terms of
the conditional variance,
%
%e12.3 #&#
\begin{equation}
\label{eqn:m-prime-2} m'(\theta) \leq\theta\cdot \Expect\ntr \bigl[ \mtx{
\Delta}_{\mtx{X}} \econst^{\theta\mtx{X}} \bigr] \qquad\mbox{for $\theta\geq0$.}
\end{equation}
In the proof of Lemma~\ref{lem:mgf-bounds}, we applied an almost sure
bound for the conditional variance to control the derivative of the
m.g.f. This time, we incorporate information about the typical size of
$\mtxx{\Delta}_{\mtx{X}}$ by developing a bound in terms of the function
$r(\psi)$.

%s12.2 #&#
\subsection{\texorpdfstring{Entropy for random matrices and duality.}
{Entropy for random matrices and duality}} \label{sec:entropy}

Let us introduce an entropy function for random matrices.
%

%de12.2 #&#
\begin{defn}[(Entropy for random matrices)] \label{def:matrix-entropy}
Let $\mtx{W}$ be a random matrix in $\mathbb{H}_{+}^{d}$ subject to the
normalization $\Expect\ntr\mtx{W} = 1$.
The \emph{\textup{(}negative\textup{)} matrix entropy} is defined as
%
%e12.4 #&#
\begin{equation}
\label{eqn:entropy} \ent(\mtx{W}) \mathrell=\Expect\ntr( \mtx{W} \log
\mtx{W}). %
\end{equation}
We enforce the convention that $0 \log0 = 0$.
\end{defn}

The matrix entropy is relevant to our discussion because its
Fenchel--Legendre conjugate is the cumulant generating function.
The Young inequality for matrix entropy offers one way to formulate this
duality relationship.

%pr12.3 #&#
\begin{prop}[(Young inequality for matrix entropy)] \label{prop:entropy-duality}
Suppose that $\mtx{V}$ is a random matrix in $\mathbb{H}^{d}$ that is
almost surely
bounded in norm, and suppose that $\mtx{W}$ is a random matrix in
$\mathbb{H}_{+}^{d}$ subject to the
normalization $\Expect\ntr\mtx{W} = 1$.
Then
\[
\Expect\ntr( \mtx{VW} ) \leq\log\Expect\ntr\econst^{\mtx{V}} + \ent(
\mtx{W}).
\]
\end{prop}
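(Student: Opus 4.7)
The plan is to deduce the matrix Young inequality by first proving a pointwise matrix version of the scalar Young inequality and then tightening it with a one-parameter optimization. The scalar inequality $ab \leq \econst^a + b \log b - b$ holds for every $a \in \R$ and $b \geq 0$ (with the convention $0 \log 0 := 0$), as one sees by minimizing the right-hand side over $a$ at $a = \log b$. I would lift this to the matrix inequality
$$
\ntr(\mtx{VW}) \leq \ntr(\econst^{\mtx{V}}) + \ntr(\mtx{W} \log \mtx{W}) - \ntr(\mtx{W}),
$$
valid for every $\mtx{V} \in \Sym{d}$ and $\mtx{W} \in \Psd{d}$. To do so, diagonalize $\mtx{W} = \sum_i w_i \vct{u}_i \vct{u}_i^\adj$ in an orthonormal basis, apply the scalar inequality with $a = \vct{u}_i^\adj \mtx{V} \vct{u}_i$ and $b = w_i \geq 0$, and use the Jensen-type bound $\econst^{\vct{u}_i^\adj \mtx{V} \vct{u}_i} \leq \vct{u}_i^\adj \econst^{\mtx{V}} \vct{u}_i$, which holds for any unit vector because the exponential is convex. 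Summing over $i$ and dividing by $d$ yields the pointwise matrix inequality.

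Next, take expectations and invoke the normalization $\Expect \ntr \mtx{W} = 1$ to reach
$$
\Expect \ntr(\mtx{VW}) \leq \Expect \ntr(\econst^{\mtx{V}}) + \ent(\mtx{W}) - 1.
$$
This is a loose version of the target (since $\log x \leq x - 1$ goes the wrong way), so I would sharpen it with a shift trick. Replace $\mtx{V}$ by $\mtx{V} - c \Id$ for an arbitrary $c \in \R$. Since $\ntr((\mtx{V} - c\Id)\mtx{W}) = \ntr(\mtx{VW}) - c \ntr \mtx{W}$ and $\econst^{\mtx{V} - c\Id} = \econst^{-c} \econst^{\mtx{V}}$, the preceding bound becomes
$$
\Expect \ntr(\mtx{VW}) \leq \econst^{-c} \Expect \ntr(\econst^{\mtx{V}}) + c - 1 + \ent(\mtx{W}).
$$
Optimizing over $c$ at the stationary point $c = \log \Expect \ntr(\econst^{\mtx{V}})$, which is finite because $\mtx{V}$ is almost surely bounded (so $\econst^{\mtx{V}} \succ \mtx{0}$ and $\Expect \ntr \econst^{\mtx{V}} > 0$), makes the first term on the right equal to $1$ and yields exactly the claimed Young inequality.

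The main obstacle is the pointwise matrix step. The scalar inequality $ab \leq \econst^a + b \log b - b$ is asymmetric: $a$ may be any real number, but $b$ must be nonnegative. This blocks a direct application of Proposition~\ref{prop:klein-inequality}, which requires both matrices to have eigenvalues in a common interval. The unit-vector Jensen reduction bypasses the difficulty cleanly by treating $\mtx{V}$ and $\mtx{W}$ asymmetrically, and this is the element of the argument that deserves the most care; the subsequent shift-and-optimize step is then a routine scalar computation.
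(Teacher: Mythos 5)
Your proof is correct. Every step checks out: the scalar Young inequality $ab \le \econst^a + b\log b - b$ on $\R \times [0,\infty)$ is established by optimizing in $a$; the lift to $\ntr(\mtx{VW}) \le \ntr \econst^{\mtx{V}} + \ntr(\mtx{W}\log\mtx{W}) - \ntr\mtx{W}$ is valid because diagonalizing $\mtx{W}$ and applying the scalar bound with $b_i = w_i \ge 0$ and $a_i = \vct{u}_i^\adj\mtx{V}\vct{u}_i$ requires only the unit-vector Jensen inequality $\econst^{\vct{u}^\adj \mtx{V}\vct{u}} \le \vct{u}^\adj \econst^{\mtx{V}}\vct{u}$, which holds by convexity of $\exp$ since $\vct{u}^\adj\mtx{V}\vct{u}$ is a convex combination of eigenvalues of $\mtx{V}$; taking expectations and invoking $\Expect\ntr\mtx{W}=1$ gives the loose bound; and the shift $\mtx{V}\mapsto \mtx{V}-c\Id$ followed by optimizing at $c=\log\Expect\ntr\econst^{\mtx{V}}$ (finite since $\mtx{V}$ is essentially bounded, so $0<\Expect\ntr\econst^{\mtx{V}}<\infty$) tightens it to the stated inequality. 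You are also right that the generalized Klein inequality of the paper cannot be applied directly, because the scalar inequality fails for $b<0$, so no single interval works for both $\mtx{V}\in\Sym{d}$ and $\mtx{W}\in\Psd{d}$; your unit-vector reduction is exactly the right way around this.

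The paper does not spell out a proof but defers to an ``easy variant'' of Carlen's Gibbs variational principle (Thm.\ 2.13 of the cited reference), which in its deterministic form is equivalent to your pointwise inequality with $\ntr\mtx{W}=1$ and $\log\ntr\econst^{\mtx{V}}$ on the right. Your argument is a self-contained, elementary route to the same endpoint and, importantly, handles the stochastic normalization $\Expect\ntr\mtx{W}=1$ in a clean way: you first prove the linearized (pre-logarithm) bound pointwise, push the expectation through linearly, and only then recover the logarithm by the shift trick. This sidesteps the issue that one cannot simply take expectations of the deterministic Gibbs inequality, because $\log$ is concave and Jensen points the wrong way. So while the route is different in detail from quoting Carlen, it buys you a short, transparent proof of precisely the random-matrix form stated in the proposition.
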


Proposition~\ref{prop:entropy-duality} follows from a variant of the
argument in \cite{Car10Trace-Inequalities}, Theorem~2.13.

%

%s12.3 #&#
\subsection{\texorpdfstring{A refined differential inequality for
the trace m.g.f.}
{A refined differential inequality for the trace m.g.f.}}

We intend to apply the Young inequality for matrix entropy to decouple
the product of random matrices in \eqref{eqn:m-prime-2}.
First, we must rescale the exponential in \eqref{eqn:m-prime-2}, so its
expected trace equals one,
%
%e12.5 #&#
\begin{equation}
\label{eqn:W-theta} \mtx{W}(\theta) \mathrell=\frac{1}{\Expect\ntr\econst
^{\theta\mtx{X}}} \cdot
\econst^{\theta\mtx{X}} = \frac{1}{m(\theta)} \cdot\econst^{\theta\mtx{X}}.
\end{equation}
For each $\psi> 0$, we can rewrite \eqref{eqn:m-prime-2} as
\[
m'(\theta) \leq\frac{\theta m(\theta)}{\psi} \cdot\Expect\ntr \bigl[ \psi\mtx{
\Delta}_{\mtx{X}} \cdot \mtx{W}(\theta) \bigr]. %
\]
The Young inequality for matrix entropy, Proposition~\ref
{prop:entropy-duality}, implies that
%
%e12.6 #&#
\begin{equation}
\label{eqn:m-prime-entropy} m'(\theta) \leq\frac{\theta m(\theta)}{\psi} \bigl[ \log
\Expect\ntr\econst^{\psi\mtxx{\Delta}_{\mtx{X}}} + \ent\bigl( \mtx{W}(\theta) \bigr) \bigr].
\end{equation}
The first term in the bracket is precisely $\psi r(\psi)$.
Let us examine the second term more closely.

To control the matrix entropy of $\mtx{W}(\theta)$, we need to bound
its logarithm. Referring back to definition \eqref{eqn:W-theta}, we
see that
%
%e12.7 #&#
\begin{equation}
\label{eqn:log-W-theta} \log\mtx{W}(\theta) = \theta\mtx{X} - \bigl( \log\Expect\ntr
\econst ^{\theta\mtx{X}} \bigr) \cdot\Id \psdle\theta\mtx{X} - \bigl( \log\ntr
\econst^{\theta\Expect\mtx
{X}} \bigr) \cdot\Id = \theta\mtx{X}. %
\end{equation}
The second relation depends on Jensen's inequality and the fact that
the trace exponential is convex \cite{Pet94Survey-Certain}, Section~2.
The third relation relies on the property that $\Expect\mtx{X} = \mtx{0}$.
Since the matrix $\mtx{W}(\theta)$ is positive, %
we can substitute the semidefinite bound \eqref{eqn:log-W-theta} into the
definition \eqref{eqn:entropy} of the matrix entropy,
\begin{eqnarray*}
\ent\bigl(\mtx{W}(\theta)\bigr) &=& \Expect\ntr \bigl[ \mtx{W}(
\theta) \cdot\log\mtx{W}(\theta) \bigr]
\\
&\leq&\theta\cdot\Expect\ntr \bigl[ \mtx{W}(\theta) \cdot\mtx{X}
\bigr] %
= \frac{\theta}{m(\theta)} \cdot\Expect\ntr \bigl[ \mtx{X} \econst
^{\theta\mtx{X}} \bigr]. %
\end{eqnarray*}
We have reintroduced the definition \eqref{eqn:W-theta} of $\mtx
{W}(\theta)$ in the last relation.
Identify the derivative \eqref{eqn:tmgf-prime-2} of the trace m.g.f.
to reach
%
%e12.8 #&#
\begin{equation}
\label{eqn:m-prime-part1} \ent\bigl(\mtx{W}(\theta)\bigr) \leq\frac{\theta m'(\theta)}{m(\theta)}.
\end{equation}

To establish a differential inequality, substitute the
definition \eqref
{eqn:r-psi} of $r(\psi)$
and the bound \eqref{eqn:m-prime-part1} into the estimate \eqref
{eqn:m-prime-entropy} to discover that
\[
m'(\theta) \leq\frac{\theta m(\theta)}{\psi} \biggl[ \psi r(\psi) +
\frac{\theta m'(\theta)}{m(\theta)} \biggr] = r(\psi) \theta\cdot m(\theta) + \frac{\theta^2}{\psi}
\cdot m'(\theta).
\]
Rearrange this formula to isolate the log-derivative $m'(\theta
)/m(\theta)$ of the trace m.g.f. We conclude that
%
%e12.9 #&#
\begin{equation}
\label{eqn:rpsi-diff-ineq} \frac{\mathrm{d}}{\mathrm{d}{\theta}} \log m(\theta) \leq\frac{
r(\psi) \theta}{1 -
\theta
^2 / \psi}\qquad
\mbox{for $0 \leq\theta< \sqrt{\psi}$.}
\end{equation}

%s12.4 #&#
\subsection{\texorpdfstring{Solving the differential inequality.}
{Solving the differential inequality}}

To integrate \eqref{eqn:rpsi-diff-ineq}, recall that $\log m(0) = 0$,
and invoke
the fundamental theorem of calculus %
to reach
\[
\log m(\theta) = \int_0^\theta\frac{\mathrm{d}}{\mathrm{d}{s}}
\log m(s) \,\mathrm{d} {s} \leq\int_0^\theta
\frac{r(\psi) s}{1 - s^2/\psi} \,\mathrm{d} {s} = \frac{\psi r(\psi) }{2} \log \biggl(
\frac{1}{1-\theta^2/\psi
} \biggr).
\]
We can develop a weaker inequality by making a further approximation
within the integral
\[
\log m(\theta) \leq\int_0^\theta\frac{r(\psi) s}{1 - s^2/\psi}
\,\mathrm{d} {s} \leq\int_0^\theta\frac{r(\psi) s}{1 - \theta^2/\psi}
\,\mathrm{d} {s} = \frac{ r(\psi) \theta^2 }{2(1 - \theta^2/\psi)}.
\]
These calculations are valid when $0 \leq\theta< \sqrt{\psi}$, so
claim \eqref{eqn:mgf-control} follows.

%

%

%s12.5 #&#
\subsection{\texorpdfstring{The matrix Laplace transform argument.}
{The matrix Laplace transform argument}} \label{sec:fancy-lt-arg}

With the trace m.g.f. bound~\eqref{eqn:mgf-control} at hand,
we can complete the proof of Theorem~\ref{thmm:concentration-subgauss}.
Proposition~\ref{prop:matrix-laplace}, the matrix Laplace transform
method, yields the estimate
\begin{eqnarray*}
\mathbb{P} \bigl\{ {\lambda_{\max} (\mtx{X}) \geq t} \bigr\}
&\leq& d \cdot\inf_{0 < \theta< \sqrt{\psi}}  \exp \biggl\{ -\theta t +
\frac{r(\psi) \theta^2}{2(1 - \theta^2/\psi)} \biggr\}
\\
&\leq &d \cdot\inf_{0 < \theta< \sqrt{\psi}}  \exp \biggl\{ -\theta t +
\frac{r(\psi) \theta^2}{2(1 - \theta/\sqrt{\psi})} \biggr\}
\\
&=& d \cdot\exp \biggl\{ - \frac{r(\psi)\psi}{2} \bigl(1-\sqrt {1+2t/\bigl(r(\psi)
\sqrt{\psi}\bigr)} \bigr)^2 \biggr\}
\\
&\leq& d \cdot\exp \biggl\{ - \frac{t^2}{2r(\psi) + 2t/\sqrt{\psi}} \biggr\},
\end{eqnarray*}
since the infimum occurs at $\theta= \sqrt{\psi}-\sqrt{\psi}/\sqrt {1+2t/(r(\psi)\sqrt{\psi}})$.
This delivers the tail bound \eqref{eqn:refined-tail}.

To establish inequality \eqref{eqn:refined-mean} for the expectation of
the maximum eigenvalue, we can apply Proposition~\ref
{prop:matrix-laplace} and the trace m.g.f. bound \eqref{eqn:mgf-control} a
second time. Indeed,
\begin{eqnarray*}
\Expect\lambda_{\max}(\mtx{X}) &\leq&\inf_{0 < \theta< \sqrt{\psi}}  \frac{1}{\theta} \biggl[ \log d + \frac{r(\psi) \theta^2}{2(1-\theta^2/\psi)} \biggr]
\\
&\leq&\inf_{0 < \theta< \sqrt{\psi}}  \frac{1}{\theta} \biggl[ \log d +
\frac{r(\psi) \theta^2}{2(1-\theta/\sqrt{\psi})} \biggr] = \sqrt{2 r(\psi) \log d} + \frac{\log d}{\sqrt{\psi}}.
\end{eqnarray*}
This completes the proof of Theorem~\ref{thmm:concentration-subgauss}.

%s13 #&#
\section{\texorpdfstring{Proof of Theorem \lowercase{\protect\ref{cor:ros-pin}}}{Proof of Theorem 7.4}}
\label{sec:pf-rosenthal}

The proof of the matrix Rosenthal inequality takes place in two steps.
First, we verify that the bound \eqref{eqn:rosenthal-pos} holds for psd
random matrices.
Then, we use this result to provide a short proof of the bound \eqref
{eqn:rosenthal}
for Hermitian random matrices. Before we start, let us remind the
reader that the
\emph{$L_p$ norm} of a scalar random variable $Z$ is given by %
$(\Expect\llvert {Z} \rrvert ^p )^{1/p}$ for each $p \geq1$.

%s13.1 #&#
\subsection{\texorpdfstring{A sum of random psd matrices.}{A sum of random psd matrices}}

We begin with the moment bound \eqref{eqn:rosenthal-pos} for an
independent sum of random psd matrices.
Introduce the quantity of interest
\[
E^2 \mathrell= \biggl( \Expect\biggl\llVert {\sum
_k \mtx {P}_k } \biggr\rrVert
_{2p}^{2p} \biggr)^{1/(2p)}.
\]
We may invoke the triangle inequality for the $L_{2p}$ norm to obtain
\begin{eqnarray*}
E^2 &\leq& \biggl( \Expect\biggl\llVert {\sum
_k (\mtx{P}_k - \Expect \mtx{P}_k)
} \biggr\rrVert _{2p}^{2p} \biggr)^{1/(2p)} + \biggl
\llVert {\sum_k \Expect\mtx{P}_k }
\biggr\rrVert _{2p}
\\
&=&\dvtx \bigl( \Expect\llVert {\mtx{X} } \rrVert _{2p}^{2p}
\bigr)^{1/(2p)} + \mu.
\end{eqnarray*}
We can apply the matrix BDG inequality to control this expectation,
which yields
an algebraic inequality between $E^2$ and $E$. We solve this inequality
to bound $E^2$.

The series $\mtx{X}$ consists of centered, independent random matrices,
so we can use the Stein pair described in Section~\ref{sec:indep-sum}.
According to \eqref{eqn:indep-sum-DeltaX}, the conditional variance
$\mtxx{\Delta}_{\mtx{X}}$ takes the form
\begin{eqnarray*}
\mtxx{\Delta}_{\mtx{X}} %
&=& \frac{1}{2} \sum
_k \bigl[ (\mtx{P}_k - \Expect
\mtx{P}_k)^2 + \Expect( \mtx{P}_k - \Expect
\mtx{P}_k)^2 \bigr]
\\
&\psdle&\frac{1}{2} \sum_k \bigl[ 2
\mtx{P}_k^2 + 2 (\Expect \mtx{P}_k)^2
+ \Expect\mtx{P}_k^2 - (\Expect\mtx{P}_k)^2
\bigr]
\\
&\psdle&\sum_k \bigl(\mtx{P}_k^2
+ \Expect\mtx{P}_k^2 \bigr).
\end{eqnarray*}
The first inequality follows from the operator convexity \eqref
{eqn:square-convex} of the square function;
the second expectation is computed exactly. The last bound uses the
operator Jensen inequality \eqref{eqn:kadison}.
Now, the matrix BDG inequality yields
\begin{eqnarray*}
E^2 &\leq&\sqrt{2p-1} \cdot \bigl( \Expect\llVert {\mtxx{\Delta
}_{\mtx {X}} } \rrVert _{p}^{p} \bigr)^{1/(2p)}
+ \mu
\\
&\leq&\sqrt{2p-1} \cdot \biggl( \Expect\biggl\llVert {\sum
_k \bigl(\mtx{P}_k^2 + \Expect
\mtx{P}_k^2 \bigr) } \biggr\rrVert _{p}^p
\biggr)^{1/(2p)} + \mu
\\
&\leq&\sqrt{4p-2} \cdot \biggl( \Expect\biggl\llVert {\sum
_k \mtx {P}_k^2 } \biggr\rrVert
_{p}^p \biggr)^{1/(2p)} + \mu.
\end{eqnarray*}
The third line follows from the triangle inequality for the $L_p$ norm
and Jensen's inequality.

Next, we search for a copy of $E^2$ inside this expectation.
To accomplish this goal, we want to draw a factor $\mtx{P}_k$ off of
each term in
the sum. The following result of Pisier and Xu \cite{PX97Noncommutative-Martingale},
Lemma~2.6,
has the form we desire.

%pr13.1 #&#
\begin{prop}[(A matrix Schwarz-type inequality)] \label{prop:fancy-holder}
Consider a finite sequence $(\mtx{A}_k)_{k \geq1}$ of deterministic
psd matrices. For each $p \geq1$,
\[
\biggl\llVert {\sum_k \mtx{A}_k^2
} \biggr\rrVert _{p} \leq \biggl( \sum_k
\llVert {\mtx{A}_k } \rrVert _{2p}^{2p}
\biggr)^{1/(2p)} \biggl\llVert {\sum_k
\mtx{A}_k } \biggr\rrVert _{2p}.
\]
\end{prop}

Apply the matrix Schwarz-type inequality, Proposition~\ref
{prop:fancy-holder}, to reach
\begin{eqnarray*}
E^2 &\leq&\sqrt{4p-2} \cdot \biggl[ \Expect \biggl( \sum
_k \llVert {\mtx{P}_k} \rrVert
_{2p}^{2p} \biggr)^{1/2} \biggl\llVert {\sum
_k \mtx{P}_k } \biggr\rrVert
_{2p}^{p} \biggr]^{1/(2p)} + \mu
\\
&\leq&\sqrt{4p-2} \cdot \biggl( \sum_k \Expect
\llVert {\mtx {P}_k} \rrVert _{2p}^{2p}
\biggr)^{1/(4p)} \biggl( \Expect\biggl\llVert {\sum
_k \mtx{P}_k } \biggr\rrVert
_{2p}^{2p} \biggr)^{1/(4p)} + \mu. %
\end{eqnarray*}
The second bound is the Cauchy--Schwarz inequality for expectation.
The resulting estimate takes the form $E^2 \leq c E + \mu$.
Solutions of this quadratic
inequality must satisfy $E \leq c + \sqrt{\mu}$.
We reach
\[
E \leq\sqrt{4p-2} \cdot \biggl( \sum_k \Expect
\llVert {\mtx {P}_k} \rrVert _{2p}^{2p}
\biggr)^{1/(4p)} + \biggl\llVert {\sum_k
\Expect\mtx{P}_k } \biggr\rrVert _{2p}^{1/2}.
\]
Square this expression to complete the proof of \eqref{eqn:rosenthal-pos}.

%

%

%

%s13.2 #&#
\subsection{\texorpdfstring{A sum of centered, random Hermitian matrices.}
{A sum of centered, random Hermitian matrices}}

We are now prepared to establish bound \eqref{eqn:rosenthal}
for a sum of centered, independent, random Hermitian matrices.
Define the random matrix $\mtx{X} \mathrell=\sum_k \mtx{Y}_k$.
We may use the matrix Stein pair described in Section~\ref{sec:indep-sum}.
According to \eqref{eqn:indep-sum-DeltaX},
the conditional variance $\mtxx{\Delta}_{\mtx{X}}$ takes the form
\[
\mtxx{\Delta}_{\mtx{X}} = \frac{1}{2} \sum
_k \bigl(\mtx {Y}_k^2 + \Expect
\mtx{Y}_k^2 \bigr).
\]
The matrix BDG inequality, Theorem~\ref{thmm:BDG-inequality}, yields
\begin{eqnarray*}
\bigl( \Expect\llVert {\mtx{X}} \rrVert _{4p}^{4p}
\bigr)^{1/(4p)} &\leq&\sqrt{4p-1} \cdot \bigl( \Expect\llVert {\mtxx{\Delta
}_{\mtx {X}} } \rrVert _{2p}^{2p} \bigr)^{1/(4p)}
\\
&=& \sqrt{4p-1} \cdot \biggl( \Expect\biggl\llVert {\frac{1}{2} \sum
_k \bigl(\mtx{Y}_k^2 +
\Expect\mtx{Y}_k^2 \bigr) } \biggr\rrVert
_{2p}^{2p} \biggr)^{1/(4p)}
\\
&\leq&\sqrt{4p-1} \cdot \biggl( \Expect\biggl\llVert {\sum
_k \mtx {Y}_k^2 } \biggr\rrVert
_{2p}^{2p} \biggr)^{1/(4p)}.
\end{eqnarray*}
The third line follows from the triangle inequality for the $L_{2p}$ norm
and Jensen's inequality. To bound the remaining
expectation, we simply note that the sum consists of independent,
random psd matrices.
We complete the proof by invoking the matrix Rosenthal
inequality \eqref
{eqn:rosenthal-pos}
and simplifying.

%
%s14 #&#
\section{\texorpdfstring{Proof of Theorem \lowercase{\protect\ref{cor:diag-perm-2}}}
{Proof of Theorem 10.3}}\label{sec:pf-diag-perm-2}

Consider the matrix Stein pair $(\mtx{X}, \mtx{X}')$ constructed in
Section~\ref{sec:diag-perm}. Expression~\eqref{eqn:diag-perm-DeltaX}
and the operator convexity \eqref{eqn:square-convex} of the matrix
square allow us to bound the conditional variance as follows.
\begin{eqnarray*}
\mtxx{\Delta}_{\mtx{X}}(\pi) &=& \frac{1}{4n} \sum
_{j,k=1}^n [ \mtx{A}_{j\pi(j)} +
\mtx{A}_{k \pi(k)} - \mtx{A}_{j \pi
(k)} - \mtx{A}_{k \pi(j)}
]^2
\\
&\psdle&\frac{1}{n} \sum_{j,k=1}^n
\bigl[ \mtx{A}_{j\pi(j)}^2 + \mtx{A}_{k \pi(k)}^2
+ \mtx{A}_{j \pi
(k)}^2 + \mtx{A}_{k \pi(j)}^2
\bigr]
\\
&=& 2 \sum_{j=1}^n \mtx{A}_{j \pi(j)}^2
+ \frac{2}{n} \sum_{j,k=1}^n
\mtx{A}_{j k}^2 = \mtx{W} + 4 \mtxx{\Sigma},% \phantom{\frac{2}{n}}
\end{eqnarray*}
where
\[
\mtx{W} \mathrell=2 \Biggl( \sum_{j=1}^n
\mtx{A}_{j\pi(j)}^2 \Biggr) - 2 \mtxx{\Sigma} \quad\mbox{and}\quad \mtx{
\Sigma} \mathrell=\frac{1}{n} \sum_{j,k=1}^n
\mtx {A}_{jk}^2.
\]
Substitute the bound for $\mtxx{\Delta}_{\mtx{X}}(\pi)$ into the
definition \eqref{eqn:r-psi} of $r(\psi)$ to
see that
%
%e14.1 #&#
\begin{eqnarray}
\label{eqn:r-perm-1} r(\psi) &\mathrell=&\frac{1}{\psi} \log\Expect\ntr
\econst^{\psi\mtxx
{\Delta}_{\mtx{X}}(\pi) }
\nonumber
\\[-8pt]
\\[-8pt]
\nonumber
&\leq&\frac{1}{\psi} \log\Expect\ntr\econst^{\psi(\mtx{W} + 4
\mtxx
{\Sigma}) } \leq4
\sigma^2 + \frac{1}{\psi} \log\Expect\ntr\econst^{\psi
\mtx{W}}.
\end{eqnarray}
The inequalities follow from the monotonicity of the trace
exponential \cite{Pet94Survey-Certain}, Section~2
and the fact that $\sigma^2 = \llVert {\mtxx{\Sigma}} \rrVert $. Therefore, it
suffices to bound the trace m.g.f. of~$\mtx{W}$.

Our approach is to construct a matrix Stein pair for $\mtx{W}$ and to
argue that the associated
conditional variance $\mtxx{\Delta}_{\mtx{W}}(\pi)$ satisfies a
semidefinite bound. We may then
exploit the trace m.g.f. bounds from Lemma~\ref{lem:mgf-bounds}.
Observe that $\mtx{W}$ and $\mtx{X}$ take the same form: both have mean
zero and share
the structure of a combinatorial sum. Therefore, we can study the
behavior of $\mtx{W}$ using the
matrix Stein pair from Section~\ref{sec:diag-perm}. Adapting \eqref
{eqn:diag-perm-DeltaX}, we
see that the conditional variance of $\mtx{W}$ satisfies
\begin{eqnarray*}
\mtxx{\Delta}_{\mtx{W}}(\pi) &=& \frac{1}{n} \sum
_{j,k=1}^n \bigl[ \mtx{A}_{j\pi(j)}^2
+ \mtx{A}_{k \pi(k)}^2 - \mtx{A}_{j \pi
(k)}^2 -
\mtx{A}_{k \pi(j)}^2 \bigr]^2
\\
&\psdle&\frac{4}{n} \sum_{j,k=1}^n
\bigl[ \mtx{A}_{j\pi(j)}^4 + \mtx{A}_{k \pi(k)}^4
+ \mtx{A}_{j \pi
(k)}^4 + \mtx{A}_{k \pi(j)}^4
\bigr]
\\
&\psdle&\frac{4R^2}{n} \sum_{j,k=1}^n
\bigl[ \mtx{A}_{j \pi(j)}^2 + \mtx{A}_{k \pi(k)}^2
+ \mtx{A}_{j
\pi
(k)}^2 + \mtx{A}_{k \pi(j)}^2
\bigr].
\end{eqnarray*}
In the first line, the centering terms in $\mtx{W}$ cancel each other
out. Then we apply the operator convexity \eqref{eqn:square-convex} of
the matrix square and the bound $\mtx{A}_{jk}^4 \psdle R^2 \mtx{A}_{jk}^2$.
Finally, identify $\mtx{W}$ and $\mtxx{\Sigma}$ to reach
%
%e14.2 #&#
\begin{equation}
\label{eqn:W-perm-bd} \mtxx{\Delta}_{\mtx{W}}(\pi) \psdle4R^2 (\mtx{W}
+ 4 \mtxx{\Sigma}) \psdle4R^2 \cdot\mtx{W} + 16R^2
\sigma^2 \cdot\Id.
\end{equation}
Matrix inequality \eqref{eqn:W-perm-bd} gives us access to established
trace m.g.f. bounds. Indeed,
\[
\log\Expect\ntr\econst^{\psi\mtx{W}} \leq\frac{8 R^2 \sigma^2 \psi^2}{1 - 4R^2 \psi}
\]
as a consequence of Lemma~\ref{lem:mgf-bounds} with parameters $c =
4R^2$ and $v = 16R^2 \sigma^2$.

At last, we substitute the latter bound into \eqref{eqn:r-perm-1} to
discover that
\[
r(\psi) \leq4\sigma^2 + \frac{8 R^2 \sigma^2 \psi}{1 - 4R^2 \psi}.
\]
In particular, setting $\psi= (8R^2)^{-1}$, we find that $r(\psi)
\leq
6\sigma^2$. Apply Theorem~\ref{thmm:concentration-subgauss} to wrap up.
\end{appendix}
\section*{Acknowledgments}
The authors thank Houman Owhadi for helpful conversations.
%% zodis "Acknowledgments" paliekamas pagal autoriu
This paper is based on two independent manuscripts
from mid-2011 that both applied the method of exchangeable
pairs to establish matrix concentration inequalities.
One manuscript is by Mackey and Jordan; the other is
by Chen, Farrell and Tropp. The authors have
combined this research into a single unified presentation,
with equal contributions from both groups.

%

% imsref loaded by akundreckaite, 2014-01-21 12:14:45
%

%suskaldyti doi

\printaddresses

\end{document}